\numberwithin{equation}{section}
\begin{document}
	
\title{On the order of magnitude of Sudler products  II}
\author{Sigrid Grepstad}
\address{Department of Mathematical Sciences, 
	Norwegian University of Science and Technology,
	NO-7491 Trondheim, Norway;
    E-mail address:  sigrid.grepstad@ntnu.no }
\author{Mario Neum\"uller}
\address{Johannes Kepler Universit\"at, 
      Linz, Austria;
	E-mail address: mario.neumueller@jku.at }
\author{Agamemnon Zafeiropoulos}
\address{Department of Mathematical Sciences, 
	Norwegian University of Science and Technology,
	NO-7491 Trondheim, Norway; 
	E-mail address: agamemnon.zafeiropoulos@ntnu.no}
\thanks{Keywords: Sudler Products, quadraric irrationals. Math. Subject Classification Number:  11J70, 11J71}
\thanks{SG and AZ are supported by Grant 275113 of the Research Council of Norway.}\thanks{MN is funded by FWF projects F5509-N26,  F5512-N26 and P29910-N35.}

\begin{abstract}  
We study the asymptotic behavior of Sudler products $P_N(\alpha)= \prod_{r=1}^{N}2|\sin \pi r\alpha|$ for quadratic irrationals $\alpha \in \mathbb{R}$. In particular, we verify the convergence of certain perturbed Sudler products along subsequences, and show that $\liminf_N P_N(\alpha) = 0$ and $\limsup_N P_N(\alpha)/N = \infty$ whenever the maximal digit in the continued fraction expansion of $\alpha$ exceeds $23$. This generalizes known results for the period one case $\alpha=[0; \overline{a}]$.
\end{abstract}


\maketitle

\newtheorem{prop}{Proposition}
\newtheorem{claim}{Claim}
\newtheorem{thm}{Theorem}
\newtheorem*{thm*}{Theorem}
\newtheorem{thma}{Theorem A}
\newtheorem{defn}{Definition}
\newtheorem{conj}{Conjecture}
\newtheorem{cor}{Corollary}
\newtheorem*{cor*}{Corollary}
\newtheorem{lem}{Lemma}

\theoremstyle{definition}
\newtheorem{exmp}{Example}
\newtheorem*{ques*}{Questions}

\theoremstyle{remark}
\newtheorem{rem}{Remark}
\newtheorem*{rem*}{Remark}
\newtheorem*{remMario}{\color[rgb]{0.2,0.8,0.2}{Remark (Mario)}}

\newcommand{\bfa}{\bf{a}}
\newcommand{\be}{\beta}
\newcommand {\ve} {\varepsilon}
\newcommand{\RR}{\mathbb{R}}
\newcommand{\QQ}{\mathbb{Q}}
\newcommand{\ZZ}{\mathbb{Z}}
\newcommand{\calO}{\mathcal{O}}
\newcommand{\ql}{q_{\ell}(\alpha_{\sigma_k})}
\newcommand{\qlplusone}{q_{\ell+1}(\alpha_{\sigma_k})}
\newcommand{\pl}{p_{\ell}(\alpha_{\sigma_k})}
\newcommand{\qlplustau}{q_{\ell+1}(\alpha_{\tau_k})}
\newcommand{\qltau}{q_{\ell}(\alpha_{\tau_k})}
\newcommand{\pltau}{p_{\ell}(\alpha_{\tau_k})}
\newcommand{\ckek}{|c_ke_k|}
\newcommand{\cu}{\mathcal{U}}

\section{Introduction and Main Results}

\subsection{Introduction} Let $\alpha\in\mathbb{R}$ and $N\geq 1$ be an integer. The Sudler product at stage $N$ and with parameter $\alpha$ is defined as
\begin{equation*}
	P_N(\alpha)=\prod_{r=1}^N|2\sin\pi r\alpha |.
\end{equation*}
Sudler products have been studied extensively, as they bear connections with several areas of research; we mention partition theory, Pad\'e approximants and dynamical systems, and refer to  \cite{mv} and references therein for further examples and details. In the present paper our main focus will be on the asymptotic order of magnitude of $P_N(\alpha)$. This topic has received much attention in recent years, and we begin by briefly reviewing key results relevant to the main results of this paper. For a more detailed overview of the asymptotic behavior of $P_N(\alpha)$, we refer to the survey paper \cite{sigrid3}.

Erd\H{o}s and Szekeres showed  in \cite{erdos} that $\liminf_{N\to\infty}P_N(\alpha)=0$ for almost all $\alpha$, and conjectured that this result is true for \emph{all} values of $\alpha$.  Lubinsky \cite{lubinsky} later confirmed that $\liminf_{N\to \infty} P_N(\alpha)=0$ whenever $\alpha$ has unbounded partial quotients in its continued fraction expansion $\alpha=[0;a_1,a_2,\ldots]$. 
 
 More recently, Mestel and Verschueren \cite{mv} studied the behavior of Sudler products $P_N(\phi)$, where $\phi = [0;1,1,\ldots]$ is the fractional part of the golden ratio. Their precise result was the following.    
 
\textbf{Theorem (Mestel, Verschueren):} \textit{Let $\phi =(\sqrt{5}-1)/2$ be the fractional part of the golden ratio and $(F_n)_{n=0}^{\infty}$ be the sequence of Fibonacci numbers. Then  there exists a constant $C>0$ such that 
\begin{equation*} 
   \lim_{n\to\infty} P_{F_n}(\phi) =C.
 	\end{equation*}
 	Moreover, for the same constant $C$ we have $\lim\limits_{n\to \infty} \dfrac{P_{F_n-1}(\phi)}{F_{n}} = \dfrac{C\sqrt{5}}{2\pi} \cdot$}
 
Here the appearance of the Fibonacci sequence is not at all surprising, as it is the sequence of denominators associated with the continued fraction expansion of $\phi$. The proof of the result relies on the specific continued fraction expansion $\phi = [0;1,1,\ldots]$ and the algebraic properties of the sequence $(F_n)_{n=0}^{\infty}$.  We now know that the convergence property for $P_{F_n}(\phi)$ is a special case of a phenomenon exhibited by all quadratic irrationals \cite{GN18}.

\textbf{Theorem (Grepstad, Neum\"uller):} \textit{Let $\alpha = [0;\overline{a_1, a_2, \ldots, a_{\ell}}] $ be a purely periodic quadratic irrational, where $\ell\geq 1$ and $a_1,\ldots, a_{\ell} \in \mathbb{N}$, and let $(q_n)_{n=1}^{\infty}$ be the sequence of denominators of convergents of $\alpha$. Then there exist constants $ C_1, C_2, \ldots, C_{\ell}>0$ such that 
\begin{equation} \label{constants} 
 \lim_{m\to\infty}P_{q_{m\ell+k}}(\alpha) = C_k, \qquad k= 1, 2, \ldots, \ell. 
 \end{equation}  
 Moreover if $\beta= [0;b_1,\ldots,b_h,\overline{a_{1},\ldots, a_{\ell}}]$ is a quadratic irrational with the same periodic part as $\alpha$ in its continued fraction expansion, then 
$$ \lim_{m\to\infty}P_{q_{h+m\ell + k}}(\beta) = C_k, \qquad k=1,2,\ldots, \ell.   $$
for the same constants $C_1,\ldots, C_{\ell}$. }

Later on, Grepstad, Kaltenb\"ock and Neum\"uller employed the factorisation technique used in the proof of Mestel and Verschueren's result to show that $\liminf P_N(\phi)>0$ \cite{gkn}, finally disproving the conjecture in \cite{erdos}. 

The proof of the lower bound on $P_N(\phi)$ given in \cite{gkn} involved studying a perturbed Sudler product $\prod_{r=1}^{N}2|\sin \pi( r\phi + \varepsilon)|.$ A systematic treatment of such perturbed products was conducted in \cite{ATZ20}, where the result by Mestel and Verschueren was generalized to quadratic irrationals of the form $\be=[0;b,b,\ldots]$ by an in-depth study of the product 
\begin{equation} \label{perturbed}
 P_{q_n}(\be, \ve) = \prod_{r=1}^{q_n}2\Big|\sin \pi\Big(r\be +(-1)^n \frac{\ve}{q_n}\Big)\!\Big| .
\end{equation} 
In \cite{ATZ20} it is shown that for each digit $b\geq 1$, the sequence of functions $P_{q_n}(\be,\ve)$ converges locally uniformly to an explicitly defined function $G_b(\ve)$, and from this the authors deduce the following strong result on the asymptotic behavior of $P_N(\be)$.

\textbf{Theorem (Aistleitner, Technau, Zafeiropoulos):} \textit{Let $\be = [0;b,b,\ldots]$, where $b\geq 1$.  The following holds. \vspace{-3mm}
\begin{itemize}
	\item[(i)]  If $b \leq 5$, then $\liminf \limits_{N \to \infty} P_N(\be) > 0$ and $\limsup \limits_{N\to\infty}\dfrac{P_N(\be)}{N} < \infty.$ \vspace{1mm}
	\item[(ii)] If $b \geq 6$, then $\liminf \limits_{N \to \infty} P_N(\be) = 0$ and $\limsup \limits_{N\to\infty}\dfrac{P_N(\be)}{N} = \infty.$
\end{itemize} }

 The theorem above gives a complete description of the asymptotic order of magnitude of $P_N(\beta)$ for irrationals $\beta=[0;b,b,\ldots]$. The main objective of this paper is to study the asymptotic behaviour of $P_N(\beta)$ for \emph{arbitrary} quadratic irrationals $\beta$, that is irrationals whose continued fraction expansions are eventually periodic with some period length $\ell$. It turns out that for such $\beta$, the sequence of functions $P_{q_n}(\beta, \varepsilon)$ defined in \eqref{perturbed} will converge along specific subsequences to $\ell$ explicitly defined functions $G_{k}(\beta,\varepsilon)$, $1 \leq k \leq \ell$ (see Theorem \ref{thm:limitFunc} below). This is, in some sense, the expected generalization of the result on $\beta=[0;b,b,\ldots]$ in \cite{ATZ20}. As a consequence, we obtain a partial extension of the theorem above to arbitrary quadratic irrationals (Theorem \ref{thm:ak23}).
 
As we shall explain later, 
 the asymptotic behaviour of $P_N(\beta)$ for $\beta= [0;b_1,\ldots,b_h,\overline{a_{1},\ldots, a_{\ell}}]$ is similar to that of $P_N(\alpha)$, where $\alpha = [0;\overline{a_1,\ldots, a_\ell}]$. It turns out that purely periodic irrationals are quite easier to analyse in terms of their continued fraction expansions. Moreover, certain relations following from such an analysis are needed in the statement of our main results. Let us therefore briefly review certain basic properties for the convergents of purely periodic irrationals.


\subsection{The irrational $\alpha = [0; \overline{a_1,\ldots, a_{\ell}}]$\label{subsec:contfracprelim}}  
The $n$-th convergent of $\alpha$ is the number $p_n/q_n$, where 
\begin{eqnarray*}	
p_{n+1} &=& a_n p_n + p_{n-1}, \qquad p_0=1, \quad p_1 = 0 , \\
q_{n+1} &=& a_n q_n + q_{n-1}, \qquad q_0=0, \quad q_1 = 1 .
\end{eqnarray*}
We mention that  the dependence of $p_n$ and $q_n$ on $\alpha$ is not explicitly stated, but if necessary we will write $p_n(\alpha)$ and $q_n(\alpha)$ to make this dependence explicit. The sequence of convergents satisfies 
\begin{equation*} 
\frac{p_1}{q_1} < \frac{p_3}{q_3} < \cdots < \alpha < \cdots < \frac{p_4}{q_4} < \frac{p_2}{q_2} \, 
\end{equation*}
and
\begin{equation*}
 \frac{1}{2q_{n+1}q_n} \, < \, \left| \alpha - \frac{p_n}{q_n} \right| \, < \, \frac{1}{q_{n+1}q_n}, \quad n\geq 1 .
\end{equation*}
We use the notation of \cite{GN18} and set 
\begin{align} 
&c(\alpha) = c =q_{\ell+1}+p_{\ell}, \nonumber  \\
&a(\alpha)= a =\frac{c(\alpha)+\sqrt{c(\alpha)^2+4(-1)^{\ell-1}}}{2},  \label{abc}  \\
&b(\alpha)= b =\frac{c(\alpha)-\sqrt{c(\alpha)^2+4(-1)^{\ell-1}}}{2} . \nonumber
\end{align}
The sequence $(q_n)_{n=1}^{\infty}$ of denominators satisfies the additional recursive relation
\begin{align} \label{qnrel}
q_{n+\ell}=c(\alpha)q_{n}+(-1)^{\ell-1}q_{n-\ell}, \qquad n\geq 2\ell.
\end{align}
For $k= 0,1, \ldots,\ell-1$ we set
\begin{equation} \label{ekck}
c_k=\frac{q_{\ell+k}-bq_k}{a-b} \qquad \text{ and } \qquad e_k=(-1)^{k-1}\frac{|aq_k-q_{\ell+k}|}{q_{\ell}} \cdot 
\end{equation} 
For notational convenience we extend the definitions of $c_k$ and $e_k$ to all integers $k\geq 0$ periodically modulo $\ell,$ so that in particular we have $c_\ell = c_0$ and $e_\ell = e_0.$  We also make use of the following relations  (for more details see e.g. \cite{GN18}):
\begin{equation} \label{ntherror} \begin{gathered}
c_k >0,   \\ 
 \Lambda_{m \ell +k} := q_{m \ell +k}\alpha - p_{m \ell +k } =  e_kb^m  = (-1)^{m\ell + k + 1}|e_k b^m|,  \\  
\frac{1}{q_{m \ell +k}}  =  \calO(|b|^m), \quad m\to \infty   \\
q_{m \ell + k} |b|^m =  c_k + \calO(b^{2m})  , \quad m \to \infty .  \end{gathered}
\end{equation}

 When studying the irrational $\alpha=[0; \overline{a_1,\ldots,a_{\ell}}]$, it is useful to consider two families of permutations on  $\ell-$tuples of positive integers $\boldsymbol{a}= (a_1,\ldots,a_{\ell})$. For $k=0,1,\ldots,\ell-1$ we define the permutation operator $\tau_k : \mathbb{N}^\ell \rightarrow \mathbb{N}^\ell$ by 
 $$ \tau_k(\boldsymbol{a}) = (a_{k+1},\ldots, a_{\ell}, a_1,\ldots, a_k). $$
 Likewise, we define the permutations $\sigma_k:\mathbb{N}^\ell \rightarrow \mathbb{N}^\ell$ for  $k=2,\ldots, \ell $ by 
 $$ \sigma_k(\boldsymbol{a}) = (a_{k-1},\ldots,a_1,a_{\ell},\ldots, a_k)  $$
 while for $k=1$ we set $\sigma_1(\boldsymbol{a}) = (a_{\ell},\ldots,a_1) $. We can define $\tau_k$ and $\sigma_k$ for all $k\geq 1$ by extending the definitions above periodically modulo $\ell$. Given a purely periodic irrational $\alpha$ with period $\boldsymbol{a}$, the corresponding purely periodic irrationals with periods $\tau_k(\boldsymbol{a})$ and $\sigma_k(\boldsymbol{a})$ will be denoted by
 \begin{equation} \label{permuations}
 \alpha_{\tau_k} = [0;\overline{a_{k+1},\ldots,a_{\ell},a_1,\ldots,a_k}]\quad \text{ and }\quad \alpha_{\sigma_k}=[0;\overline{a_{k-1},\ldots a_1,a_{\ell},\ldots,a_k}] .
 \end{equation}
 
The significance of the permutations $\tau_k$ and $\sigma_k$ when studying the approximation properties of $\alpha$ is indicated by the following relations, which hold for any index $k=0,1,\ldots, \ell-1:$
\begin{equation}\label{qlplusone} \begin{gathered}
c(\alpha) \, =\, c(\alpha_{\tau_k}) \, = \, c(\alpha_{\sigma_k} ) ,   \\ 
q_{\ell}(\alpha_{\tau_k}) \, = \, q_{\ell}(\alpha_{\sigma_k}) ,  \\ 
p_{\ell}(\alpha_{\tau_k}) = q_{\ell-1}(\alpha_{\sigma_k}) \, \text{ and } \, p_{\ell}(\alpha_{\sigma_k}) = q_{\ell-1}(\alpha_{\tau_k}),   \\  
\frac{q_{\ell+1}(\alpha_{\tau_k})}{q_{\ell}(\alpha_{\tau_k})} = a_k + \frac{p_{\ell}(\alpha_{\sigma_k})}{q_{\ell}(\alpha_{\sigma_k})} ,   \\
| c_ke_k| = \frac{q_{\ell}(\alpha_{\tau_k})}{ c(\alpha_{\tau_k})-2b} \, \cdot 
\end{gathered}\end{equation}

\subsection{Main Results} We are now equipped to state our main results. As alluded to above, our first goal is to generalize the convergence result of \cite{ATZ20} on perturbed products to irrationals $\beta= [0;b_1,\ldots,b_h,\overline{a_1,\ldots, a_{\ell}}]$ with $\ell\geq 2$. For any $\varepsilon \in\mathbb{R}$ we define
\begin{equation} \label{perturbedproductnewdef}
P_{q_n}(\beta,\varepsilon):=\prod_{r=1}^{q_n}2\big| \sin \pi \big(r\beta + (-1)^{n+1}\frac{\varepsilon}{q_n}\big)\big|.
\end{equation} 

 In view of the aforementioned theorem  by Grepstad and Ne\"{u}muller in \cite{GN18}, one would expect the perturbed products to converge along specific subsequences. We show that this is indeed the case. For the sake of convenience, we introduce the notation 
\begin{equation} \label{ukt}
u_k(t)=2\left(  \frac{t}{|e_kc_k|}-\{t\alpha_{\sigma_k}\}+\frac12 \right), \quad t=1,2,\ldots
\end{equation}
for each $k=1,\ldots, \ell$,  where $c_k, e_k$ are as in \eqref{ekck} and $\alpha_{\sigma_k}$ as in \eqref{permuations}, all referring to the purely periodic irrational $\alpha = [0; \overline{a_1,\ldots,a_\ell}]$.

\begin{thm}\label{thm:limitFunc} 
Let  $\beta= [0;b_1,\ldots, b_h, \overline{a_1,\ldots,a_{\ell}}]$ where  $h\geq 0$, $\ell,a_1, \ldots,a_{\ell} \in \mathbb{N}$ and $P_{q_n}(\beta,\ve)$ be the sequence of perturbed Sudler products defined in \eqref{perturbedproductnewdef}. Then for each $k=1,\ldots, \ell$ the subsequence $P_{q_{h+m\ell+k}}(\beta,\ve)$ converges locally uniformly to a function $G_{k}(\beta,\ve)$. The limit function satisfies 
\begin{align} \label{geven}
G_{k}(\beta,\varepsilon) =&   \left|1 + \frac{\varepsilon}{|c_ke_k|}\right|  \left(1 +\frac{1}{|b|^{2}}\right)^{\frac{1}{c-2}} \frac{1 }{(c!)^{1/(c-2)}}\times  \nonumber \\ 
& \times \prod_{t=1}^{\infty}\left| \left( 1 - \frac{\left(1 +  \frac{2\varepsilon}{|e_kc_k|}\right)^2}{u_k(t)^2} \right)\left(1-\frac{\left(1 + \frac{2}{|b|^2}\right)^2}{u_k(t)^2} \right)^{\frac{1}{c-2}}\prod_{s=1}^{c-1} \left( 1 - \frac{(1+2s)^2}{u_k(t)^2} \right)^{-\frac{1}{c-2}} \right|
\end{align}
when $\ell$ is even, and
\begin{equation} \label{godd}
G_{k}(\beta,\varepsilon) = \frac{\left| 1 + \dfrac{\varepsilon}{|c_ke_k|}\right|}{\prod\limits_{s=1}^{c}\left|s-a \right|^{\frac{1}{c}}} \times \prod_{t=1}^{\infty}\left|\left(1 - \frac{\left(1+ \frac{2\varepsilon}{|e_kc_k|}  \right)^2}{u_k(t)^2} \right)\prod_{s=0}^{c-1}\left(1 - \frac{\left(1 + 2( s- \frac{1}{|b|})\right)^2 }{u_k(t)^2}  \right)^{-\frac{1}{c}} \right|
\end{equation}
when $\ell$ is odd. Here the sequence $(u_k(t))_{t=1}^{\infty}$ is given in \eqref{ukt}  and the constants $a$, $b$, $c$, $c_k$ and $e_k$ are defined in Section \ref{subsec:contfracprelim}, all corresponding to the purely periodic irrational $\alpha=[0;\overline{a_1,\ldots,a_{\ell}}]$. In both cases, the functions $G_k(\beta,\cdot)$, $k= 1,\ldots,\ell$, are continuous and $C^{\infty}$ on every interval where they are non-zero. 
\end{thm}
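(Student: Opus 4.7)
The plan is to adapt the strategy from the period-one case of \cite{ATZ20} to arbitrary period $\ell$. Fix $k \in \{1, \ldots, \ell\}$ and write $n_m = m\ell + k$. By \eqref{qnrel} we have $q_{n_{m+1}} = c\, q_{n_m} + (-1)^{\ell-1} q_{n_{m-1}}$, so the index set $\{1,\ldots,q_{n_{m+1}}\}$ splits into $c$ consecutive blocks of length $q_{n_m}$, together with a residual block of length $q_{n_{m-1}}$. I would exploit this block decomposition to compare the two consecutive perturbed Sudler products $P_{q_{n_m}}(\alpha,\varepsilon)$ and $P_{q_{n_{m+1}}}(\alpha,\varepsilon)$.

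For an index $r = jq_{n_m} + r'$ with $0 \leq j \leq c-1$ and $1 \leq r' \leq q_{n_m}$, one has $r\alpha \equiv r'\alpha + j\Lambda_{n_m} \pmod 1$, and by \eqref{ntherror} the shift $j\Lambda_{n_m}$ is of order $|e_k|\,|b|^m$, while the rescaling $q_{n_m}|b|^m \to c_k$ controls the relative positions of the shifted points. Writing $2\sin\pi x = 2\pi x\prod_{t\geq 1}(1 - x^2/t^2)$ and sorting the indices $r'$ in each block by the distance of $r'\alpha$ to the nearest integer, the small-distance indices can be enumerated by an integer $t \geq 1$ via the three-distance theorem; the sequence $u_k(t)$ in \eqref{ukt} is precisely the asymptotic position of these indices on the scale $|b|^m$. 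The permuted irrational $\alpha_{\sigma_k}$ enters through the identities \eqref{qlplusone}, which link the approximants of $\alpha$ at level $k$ to those of $\alpha_{\sigma_k}$, and in particular describe how the best rational approximations landing in a given block are reshuffled by the permutation.

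With these expansions in place, I would show that the ratio $P_{q_{n_{m+1}}}(\alpha,\varepsilon) / P_{q_{n_m}}(\alpha,\varepsilon)$ converges as $m \to \infty$ to an explicit $\varepsilon$-dependent function, with error geometrically small (of order $|b|^{2m}$). A telescoping argument then yields convergence of $P_{q_{n_m}}(\alpha,\varepsilon)$ to a limit $G_k(\alpha,\varepsilon)$. The explicit forms \eqref{geven}--\eqref{godd} emerge by matching the resulting product against the combinatorial block decomposition: the prefactor $|1 + \varepsilon/|c_ke_k||$ comes from the single factor with $r = q_{n_m}$, whose argument is of size $\Lambda_{n_m} + \delta_{n_m}$, and the identity $|c_ke_k|=q_\ell(\alpha_{\tau_k})/(c-2b)$ from \eqref{qlplusone} makes this rescaling match the formula. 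The exponents $1/(c-2)$ versus $1/c$, as well as the extra $(1 + 1/|b|^2)$-type factor, reflect the sign $(-1)^{\ell-1}$ in \eqref{qnrel} and thus the parity of $\ell$. Local uniform convergence in $\varepsilon$ and smoothness then follow from uniform bounds on compact sets disjoint from the zeros together with absolute convergence of the resulting infinite product.

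The main obstacle is in identifying the precise combinatorial content of the small-distance indices in each of the $c$ blocks, and in particular the emergence of $\alpha_{\sigma_k}$ in \eqref{ukt}. For $\ell=1$ one has $\alpha_{\sigma_1}=\alpha$ and the computation reduces to the Ostrowski expansion of $\alpha$; for $\ell \geq 2$, the shift $j\Lambda_{n_m}$ pushes different $r'$ in each block closest to an integer, and the relations \eqref{qlplusone} are needed to show that the reshuffled positions are governed by $\alpha_{\sigma_k}$ rather than by $\alpha$ itself. The parity-dependent bifurcation between \eqref{geven} and \eqref{godd}, stemming from $(-1)^{\ell-1}$ in the recursion and the sign $(-1)^{n+1}$ in the definition of the perturbation, further requires separate bookkeeping in each of the two cases.
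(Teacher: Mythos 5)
Your toolbox overlaps substantially with the paper's (the recursion \eqref{qnrel} and the induced block decomposition, the sine-product expansion, the three-distance structure producing $u_k(t)$, and the identities \eqref{qlplusone}), but the load-bearing step --- how convergence is actually established --- rests on a mechanism that does not work. Writing $n_m=m\ell+k$ as you do: if you decompose $P_{q_{n_{m+1}}}(\alpha,\varepsilon)$ into $c$ blocks of length $q_{n_m}$ plus a residual, then by \eqref{ntherror} the $j$-th block is, up to $\calO(b^{2m})$, a perturbed product $P_{q_{n_m}}(\alpha,\varepsilon_j)$ with $\varepsilon_j \approx j|c_ke_k| \pm |b|\varepsilon$. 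Hence the ratio $P_{q_{n_{m+1}}}(\alpha,\varepsilon)/P_{q_{n_m}}(\alpha,\varepsilon)$ tends, if to anything, to $\prod_j G_k(\alpha,\varepsilon_j)/G_k(\alpha,\varepsilon)$ times a residual contribution --- an expression that is neither $1$ nor even well defined before convergence is known. The telescoping argument therefore cannot start: if the ratio converged to a nontrivial $\varepsilon$-dependent limit, the products $P_{q_{n_m}}(\alpha,\varepsilon)$ would diverge or vanish geometrically, and proving instead that the ratio is $1+\calO(|b|^{2m})$ is exactly as hard as the convergence you are trying to deduce from it. Relatedly, your sketch offers no mechanism that produces the exponents $1/(c-2)$ and $1/c$ in \eqref{geven}--\eqref{godd}; ``matching against the block decomposition'' does not generate fractional powers.

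The paper separates the two tasks. Convergence of $P_{q_{m\ell+k}}(\alpha,\varepsilon)$ is proved first, via the Mestel--Verschueren three-factor decomposition $P_{q_n}(\alpha,\varepsilon)=A_n(\varepsilon)\,B_n\,C_n(\varepsilon)$ of Lemma \ref{lem:decompPqnPerturbed}: the $\varepsilon$-independent factor $B_n$ converges along the subsequence by \cite{GN18}, while $A_n(\varepsilon)$ and $C_n(\varepsilon)$ are handled as in \cite{mv} and \cite{ATZ20}; this yields the limit in the form \eqref{gktemporary}, i.e.\ your prefactor and the infinite product over $u_k(t)$, but with an undetermined constant $B^{(k)}$. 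Only afterwards is the block decomposition used, applied to the \emph{unperturbed} products (whose convergence is already known from \cite{GN18}), to derive the functional equation \eqref{greleven} for even $\ell$ and its odd-$\ell$ analogue; solving these for $B^{(k)}$ --- which amounts to extracting a $(c-2)$-th or $c$-th root --- is precisely what produces the constants $(c!)^{-1/(c-2)}$, $\prod_s|s-a|^{-1/c}$ and the $s$-products in \eqref{geven} and \eqref{godd}. To salvage your route you must either import the convergence of the $\varepsilon$-free factor from \cite{GN18}, as the paper does, or supply an independent a priori convergence argument; the block decomposition alone only yields a consistency relation among the putative limits, not their existence.
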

 
The formulae \eqref{geven} and \eqref{godd} in Theorem \ref{thm:limitFunc} imply that the limit functions $G_k(\beta,\varepsilon)$ only depend on the periodic part  of the continued fraction expansion of $\beta$; the digits $b_1,\ldots,b_h$ in the pre-periodic part do not play any role at all.  
 
\begin{rem}
Note that we have altered the definition of $P_{q_n}(\alpha,\varepsilon)$ compared to \cite{ATZ20}, i.e.\ we use $(-1)^{n+1}$ instead of $(-1)^n$. This relates to the fact that in \cite{GN18} the denominator of the $n$--th convergent was defined as $q_{n+1}$ while in \cite{ATZ20} the denominator of the $n$--th convergent is $q_{n}$.
\end{rem}

\begin{rem}
An alternative proof of Theorem \ref{thm:limitFunc} has recently appeared in  \cite{chris_bence}. There the limit function is given in a different form and additionally an explicit approximation error is obtained.   
\end{rem}

Since the constants $C_1,\ldots,C_{\ell}$ in \eqref{constants} satisfy $C_k = G_k(\beta,0), (1\leq k \leq \ell)$, Theorem \ref{thm:limitFunc} allows us to explicitly calculate their values.

\begin{cor}\label{cor:C_k} Let $\beta=[0;b_1,\ldots,b_h, \overline{a_1,\ldots,a_{\ell}}]$ and $C_1,\ldots, C_{\ell}>0$ be the constants in \eqref{constants}. Then for $k=1,2,\ldots, \ell $ we have
\begin{equation} \label{cvalueseven}
 C_k = \left( \frac{1+a^2}{c!} \right)^{\frac{1}{c-2}}\prod_{t=1}^{\infty} \left(1- \frac{1}{u_k(t)^2}\right) \left|1- \frac{(1+2a^2)^2}{u_k(t)^2} \right|^{\frac{1}{c-2}} \prod_{s=1}^{c-1}\left| 1- \frac{(1+2s)^2}{u_k(t)^2}  \right|^{-\frac{1}{c-2}}
\end{equation}	
when $\ell $ is even, and 
\begin{equation} \label{cvaluesodd}
C_k = \frac{1}{\prod\limits_{s=1}^{c}\left|s-a\right|^{\frac{1}{c}}}\prod_{t=1}^{\infty}\left(1- \frac{1}{u_k(t)^2}\right)\prod_{s=0}^{c-1}\left| 1- \frac{(1+2s-2a)^2 }{u_k(t)^2} \right|^{-\frac{1}{c}}
\end{equation}	
when $\ell $ is odd. 	
\end{cor}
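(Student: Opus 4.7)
The corollary follows by combining Theorem \ref{thm:limitFunc} with the convergence \eqref{constants} of Grepstad and Neum\"uller, and then performing elementary algebraic simplifications. My plan has three steps.

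First, I would observe that $P_{q_n}(\alpha,0)=P_{q_n}(\alpha)$ directly from the definition \eqref{perturbedproductnewdef}, since the perturbation $(-1)^{n+1}\varepsilon/q_n$ vanishes at $\varepsilon=0$. Because $0$ lies in the interior of the domain of $G_k(\alpha,\cdot)$ and the convergence in Theorem \ref{thm:limitFunc} is locally uniform, I may evaluate at $\varepsilon=0$ to conclude $\lim_{m\to\infty}P_{q_{m\ell+k}}(\alpha)=G_k(\alpha,0)$. Together with \eqref{constants}, this yields $C_k=G_k(\alpha,0)$ for every $k=1,\ldots,\ell$.

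Next, I would specialise the explicit expressions \eqref{geven} and \eqref{godd} to $\varepsilon=0$. Setting $\varepsilon=0$ collapses the prefactor $|1+\varepsilon/|c_ke_k||$ to $1$, and inside the infinite product it reduces the term $(1+2\varepsilon/|e_kc_k|)^2$ to $1$, so the leading bracket of the product becomes $1-1/u_k(t)^2$.

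The final step is to eliminate $|b|$ using the fact, immediate from \eqref{abc}, that $a$ and $b$ are the two roots of $x^2-cx+(-1)^\ell=0$, so that $ab=(-1)^\ell$. For even $\ell$ both roots are positive and $ab=1$, giving $|b|=1/a$; then $1+1/|b|^2=1+a^2$ and $1+2/|b|^2=1+2a^2$, so the prefactor $(1+1/|b|^2)^{1/(c-2)}(c!)^{-1/(c-2)}$ becomes $\bigl((1+a^2)/c!\bigr)^{1/(c-2)}$ and the bracket $(1-(1+2/|b|^2)^2/u_k(t)^2)$ becomes $(1-(1+2a^2)^2/u_k(t)^2)$; this produces \eqref{cvalueseven}. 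For odd $\ell$ one has $a>0>b$ with $ab=-1$, hence $a|b|=1$ and $1/|b|=a$; the substitution $1+2(s-1/|b|)=1+2s-2a$ transforms \eqref{godd} directly into \eqref{cvaluesodd}.

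I anticipate no real obstacle, as the argument is essentially mechanical. The only point requiring a small verification is the placement of the absolute-value signs: in both \eqref{cvalueseven} and \eqref{cvaluesodd} the factor $1-1/u_k(t)^2$ appears without bars, so one should check that it is nonnegative. This follows from a short estimate on $u_k(t)$ using the definition \eqref{ukt} together with the identity $|c_ke_k|=q_\ell(\alpha_{\tau_k})/(c-2b)$ recorded in \eqref{qlplusone}, which forces $u_k(t)\geq 1$ for every $t\geq 1$.
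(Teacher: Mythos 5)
Your proposal is correct and matches the paper's (implicit) argument: the corollary is stated there as an immediate consequence of $C_k=G_k(\alpha,0)$ together with \eqref{geven}--\eqref{godd}, and your algebra — $ab=(-1)^\ell$, hence $1/|b|=a$ in both parity cases — is exactly the simplification needed. Your closing check that $u_k(t)>1$ (so the factor $1-1/u_k(t)^2$ is positive and the bars may be dropped) is a sound extra verification.
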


Our next result  relates the asymptotic size of $P_N(\beta)$ with the size of the constants $C_1,\ldots, C_{\ell}$ in \eqref{constants}. This is the analogue of Lemma $1$ in \cite{ATZ20}, and the proof is nearly identical. Nevertheless, we include the proof later in the text for the sake of completeness. 

\begin{thm} \label{thm2}
Let $\beta = [0;b_1,\ldots, b_h,\overline{a_1, \ldots, a_\ell} ]$ and $(C_k)_{k=1}^{\ell}$ be the constants as in \eqref{constants}. If $C_{k_0}< 1$ for some index $1\leq k_0 \leq \ell$ then 
\begin{equation} \label{sudler_behaviour} \liminf_{N\to\infty}P_N(\beta) = 0 \qquad \text{ and } \qquad \limsup_{N\to\infty}\frac{P_N(\beta)}{N} =\infty . \end{equation}
\end{thm}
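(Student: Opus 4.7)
The plan is to exploit the factorization of $P_N(\alpha)$ over Ostrowski-type decompositions of $N$, combined with the convergence in Theorem \ref{thm:limitFunc} and the continuity of $G_{k_0}(\alpha,\cdot)$ at $0$.

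For the statement $\liminf_N P_N(\alpha) = 0$, I would choose $N$ of the form
\begin{equation*}
N = q_{n_1} + q_{n_2} + \cdots + q_{n_r}
\end{equation*}
with $n_1 > n_2 > \cdots > n_r$, every $n_i \equiv k_0 \pmod{\ell}$, and gaps $n_{i-1}-n_i$ very large. Using $q_{n_j}\alpha \equiv \Lambda_{n_j} \pmod{1}$, the product factorizes as
\begin{equation*}
P_N(\alpha) = \prod_{i=1}^{r} P_{q_{n_i}}\!\big(\alpha,\,\varepsilon_i^{(N)}\big),
\end{equation*}
where $\varepsilon_i^{(N)} = (-1)^{n_i+1} q_{n_i} \sum_{j<i}\Lambda_{n_j}$. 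From the estimates \eqref{ntherror} one derives $|\varepsilon_i^{(N)}| = \calO\big(|b|^{(n_{i-1}-n_i)/\ell}\big)$, so sufficiently large gaps push all $\varepsilon_i^{(N)}$ into any prescribed neighborhood of $0$. Because $G_{k_0}(\alpha,0) = C_{k_0} < 1$ and $G_{k_0}(\alpha,\cdot)$ is continuous at $0$, there exist $\eta, \delta > 0$ with $G_{k_0}(\alpha,\varepsilon) \leq 1 - \eta$ for $|\varepsilon|\leq\delta$. Theorem \ref{thm:limitFunc} then yields $P_{q_{n_i}}(\alpha, \varepsilon_i^{(N)}) \leq 1 - \eta/2$ for all $i$ once $n_r$ is sufficiently large, giving $P_N(\alpha) \leq (1-\eta/2)^r \to 0$ as $r\to\infty$.

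For $\limsup_N P_N(\alpha)/N = \infty$, I would use the substitution $r \mapsto q_n - r$ to rewrite, for any $1 \leq M < q_n$,
\begin{equation*}
\prod_{r=q_n - M + 1}^{q_n - 1} 2|\sin\pi r\alpha| = \prod_{s=1}^{M-1} 2|\sin\pi(s\alpha - \Lambda_n)|,
\end{equation*}
which approximates $P_{M-1}(\alpha)$ up to a multiplicative factor tending to $1$ as $n\to\infty$ with $M$ fixed. Combining this with the trivial splitting
\begin{equation*}
P_{q_n - 1}(\alpha) = P_{q_n - M}(\alpha) \cdot \prod_{r=q_n - M + 1}^{q_n - 1} 2|\sin\pi r\alpha|,
\end{equation*}
and using $P_{q_n - 1}(\alpha) = P_{q_n}(\alpha)/2|\sin\pi q_n\alpha|$ together with $P_{q_n}(\alpha) \to C_k$ and $q_n|\Lambda_n| \to |c_k e_k|$ for $k \equiv n \pmod{\ell}$, I would deduce
\begin{equation*}
\frac{P_{q_n - M}(\alpha)}{q_n - M} \geq \frac{K}{P_{M-1}(\alpha)(1 + o(1))}
\end{equation*}
for some $K > 0$, as $n \to \infty$ with $M$ fixed. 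Choosing $M = N_r + 1$ with $N_r$ from the $\liminf$ construction above, so that $P_{M-1}(\alpha) = P_{N_r}(\alpha) \to 0$, forces $P_{q_n - M}/(q_n - M) \to \infty$ along a suitable sequence.

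The main technical obstacle is controlling the multiplicative error when comparing $\prod_{s=1}^{M-1} 2|\sin\pi(s\alpha - \Lambda_n)|$ to $P_{M-1}(\alpha)$: whenever $s\alpha$ lies unusually close to an integer for some $s \leq M-1$, the ratio of the two sine factors can be significant, and one must show that the product of all these ratios stays bounded uniformly in $M$ as $n \to \infty$. This is precisely the step where the quadratic irrationality of $\alpha$ and the explicit size of $|\Lambda_n|$ in \eqref{ntherror} play a decisive role.
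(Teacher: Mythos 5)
Your proposal is correct and follows essentially the same route as the paper: the $\liminf$ part is exactly the paper's block decomposition of $P_N$ into perturbed products $P_{q_{n_i}}(\alpha,\varepsilon_i)$ with small perturbations controlled via large gaps, and the $\limsup$ part is the paper's device of stripping and reflecting the tail of $P_{q_n-1}(\alpha)$ and using $P_{q_n-1}(\alpha)\asymp q_n$. The ``technical obstacle'' you flag at the end is moot in your own setup: since $M=N_r+1$ is fixed before $n\to\infty$, only finitely many ratios $|\sin\pi(s\alpha-\Lambda_n)|/|\sin\pi s\alpha|$ need controlling and each tends to $1$ because $\sin\pi s\alpha\neq 0$, so no uniformity in $M$ is required (the paper avoids even this by re-running the block argument on the reflected product instead of comparing it to $P_{M-1}(\alpha)$, but the two treatments are equivalent).
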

 \begin{rem} \label{rem4} By the aforementioned Theorem of Grepstad and Neum\"uller, the values of $C_1,\ldots, C_k$ only depend on the periodic part of the quadratic irrational  $\beta$. Combined with Theorem \ref{thm2}, this explains why it suffices to consider only purely periodic irrationals when trying to detect those irrationals $\beta$ for which the Sudler product $P_{N}(\beta)$ satisfies \eqref{sudler_behaviour}. 

\end{rem}

Theorem \ref{thm2} tells us that as long as \emph{one} of the constants $C_k\,\, (1\leq k \leq \ell)$ defined in \eqref{constants} is less than $1$, the Sudler product corresponding to the irrational $\beta = [0;b_1,\ldots, b_h,\overline{a_1, \ldots, a_\ell} ]$ satisfies  \eqref{sudler_behaviour}.  This raises the question of which out of the $\ell$ constants $C_k$ associated with  $\alpha=[0;\overline{a_1, \ldots a_{\ell}}]$  is expected to be minimal.

The plots in Figures \ref{fig:alph1a_2} and \ref{fig:alph2a_2} show graphs of the functions $G_k(\alpha, \varepsilon)$ for specific choices of $\alpha=[0; \overline{a_1, a_2}]$ and $k \in \{1, 2\}$. Since $C_k =G_k(\alpha, 0),$ the value of $C_k$ is the ordinate of the point of intersection of the graph with the vertical axis. These graphs seem  to suggest that the bigger the digit $a_k$ is, the smaller the constant $C_k$ becomes.  

\begin{figure}
    \centering
    \subcaptionbox{$\alpha=[0;\overline{1,2}]$}{\includegraphics[width=7cm]{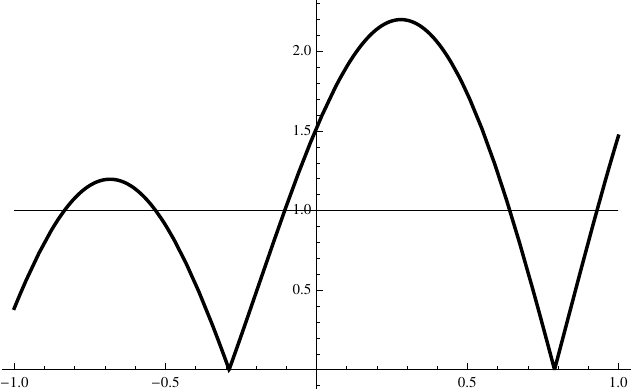}}
    \subcaptionbox{$\alpha=[0;\overline{1,3}]$}{\includegraphics[width=7cm]{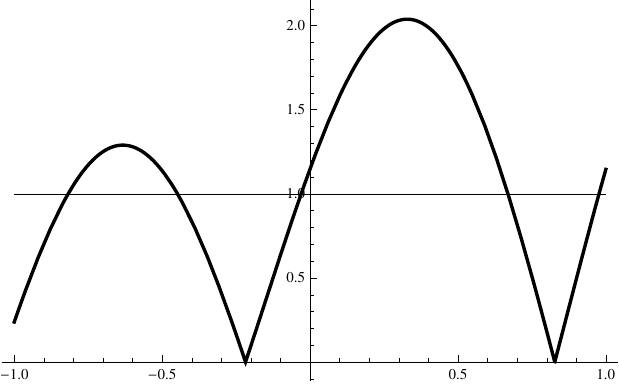}}
    \subcaptionbox{$\alpha=[0;\overline{1,4}]$}{\includegraphics[width=7cm]{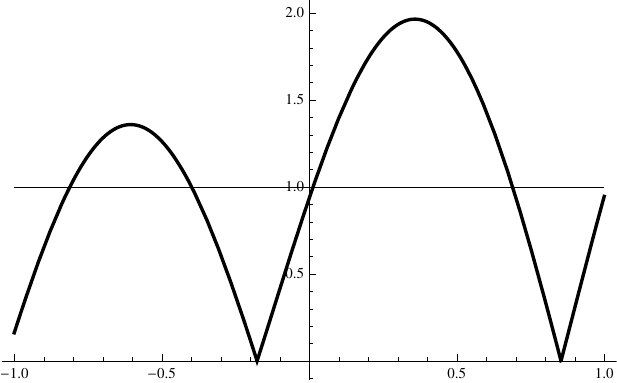}}
    \subcaptionbox{$\alpha=[0;\overline{1,5}]$}{\includegraphics[width=7cm]{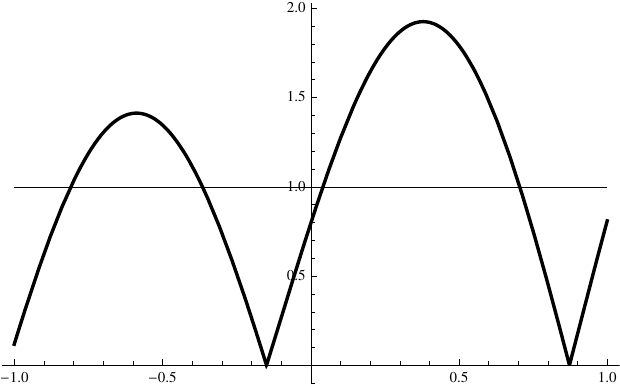}}
    \caption{Plots of the limit functions $G_2(\alpha, \varepsilon)$ for the stated values of $\alpha=[0; \overline{1, a_2}]$. It appears that $C_2=G_2(\alpha,0)<1$ whenever $a_2\geq 4$. \label{fig:alph1a_2}}
\end{figure}

\begin{figure}
    \centering
    \subcaptionbox{$\alpha=[0;\overline{2,3}]$}{\includegraphics[width=7cm]{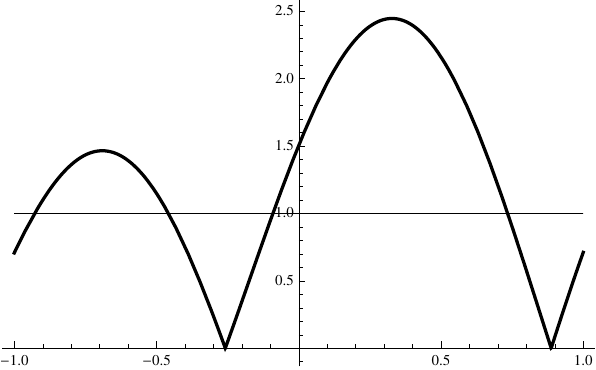}}
    \subcaptionbox{$\alpha=[0;\overline{2,4}]$}{\includegraphics[width=7cm]{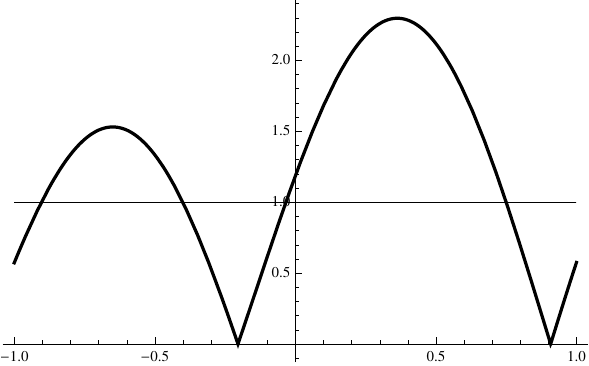}}
    \subcaptionbox{$\alpha=[0;\overline{2,5}]$}{\includegraphics[width=7cm]{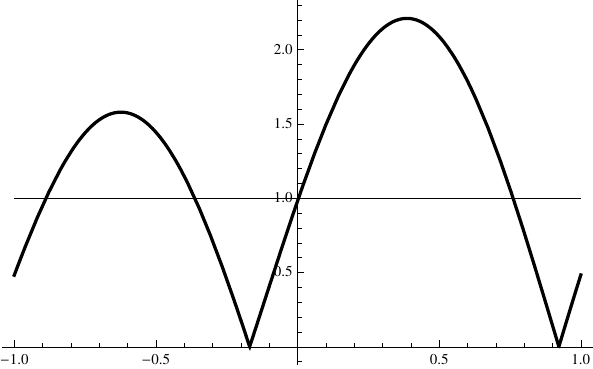}}
    \subcaptionbox{$\alpha=[0;\overline{2,6}]$}{\includegraphics[width=7cm]{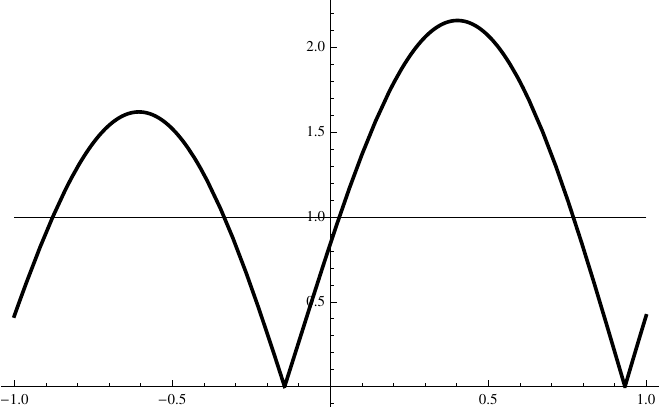}}
    \caption{Plots of the limit functions $G_2(\alpha, \varepsilon)$ for the stated values of $\alpha=[0; \overline{2, a_2}]$. It appears that $C_2=G_2(\alpha,0)<1$ whenever $a_2\geq 5$. \label{fig:alph2a_2}}
\end{figure}

In spite of the hints provided by the plots, it remains to verify rigorously that $C_k=G_k(\alpha,0)$ decreases with increasing values for the digit $a_k$. Moreover, it should be pointed out that the given plots provide no information on the significance of the period length $\ell$ in the continued fraction expansion of $\alpha$. 
The period length can be chosen arbitrarily large, and it might be that the size of $\ell$ has an impact on the overall sizes of the constants $C_k$. Moving forward, we will thus focus on three main questions:
\begin{enumerate}
\item[$\bullet$] Is the phenomenon implied by the graphs in Figures \ref{fig:alph1a_2} and \ref{fig:alph2a_2} indeed true, i.e.\ for any index $1\leq k \leq \ell$, is  $C_k$ decreasing as a function of the digit $a_k$?   \vspace{2mm}
\item[$\bullet$] Suppose we fix some period length $\ell \geq 2$. Does there exist an integer $K = K_\ell\geq 1$ such that for any irrational   $\alpha = [0; \overline{a_1, \ldots, a_\ell} ]$ with $\max_{1\leq i \leq \ell} a_i \geq K$ the Sudler product $P_N(\alpha)$ satisfies \eqref{sudler_behaviour}? \vspace{2mm}
\item[$\bullet$] If such an integer exists, can it be chosen independently of the period length $\ell$? \end{enumerate}

By a careful analysis of the product formulas established in Corollary \ref{cor:C_k} we find that when $k$ is the index  corresponding to the maximal digit $a_k$ in $\alpha=[0; \overline{a_1, \ldots a_{\ell}}],$ then $C_k$ is bounded above by an expression which is indeed decreasing as a function of $a_k$. In turn, this leads to the following result, which provides a positive answer to questions two and three.
\begin{thm} \label{thm:ak23}
Let $\beta=[0;  b_1,\ldots,b_h,  \overline{a_1, \ldots , a_{\ell}}]$ be a quadratic irrational with period length $\ell \geq 2$, and say $a_k=\max_j a_j$. Then 
$$ \liminf_{N\to\infty}P_N(\beta) = 0 \qquad \text{ and } \qquad \limsup_{N\to\infty}\frac{P_N(\beta)}{N} =\infty , $$
whenever $a_k \geq 23$.
\end{thm}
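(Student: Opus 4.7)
The plan is to apply Theorem \ref{thm2} with $k$ chosen as the index where $a_k=\max_j a_j$. This reduces the claim to showing that under the hypothesis $a_k\geq 23$ one has $C_k<1$. I would then invoke Corollary \ref{cor:C_k} and analyze the explicit product formulas \eqref{cvalueseven} and \eqref{cvaluesodd} directly; the choice of $k$ is motivated by the experimental evidence in Figures \ref{fig:alph1a_2}--\ref{fig:alph2a_2} and by the fact that, via $\sigma_k$, this index places the maximal digit in the position that controls the "scalar" part of the formula.

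\textbf{The prefactor.} Write $C_k=P\cdot Q$, where $P$ is the scalar prefactor --- $((1+a^2)/c!)^{1/(c-2)}$ for even $\ell$ and $\prod_{s=1}^c|s-a|^{-1/c}$ for odd $\ell$ --- and $Q$ is the infinite product over $t$. Using \eqref{abc} one has $a-c=O(1/c)$, so in both cases $P$ is comparable to $(c!)^{-1/(c-2)}$ up to a factor tending to $1$ as $c\to\infty$, and Stirling's formula yields $P\asymp e/c$. The hypothesis $a_k\geq 23$ is transferred to a lower bound on $c$ through the continuant identity $c=q_{\ell+1}+p_\ell\geq q_{k+1}\geq a_k q_k\geq a_k$, so in particular $c\geq 23$, and an elementary numerical estimate already forces $P$ below, say, $0.13$, uniformly in the remaining digits of $\alpha$.

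\textbf{The infinite product and main obstacle.} It remains to bound $Q$ by an absolute constant that, when multiplied by $P$, is strictly smaller than $1$. Taking logarithms and using $u_k(t)\asymp 2t/|c_ke_k|$ for large $t$ together with $\log|1-x_t^2/u_k(t)^2|=O(1/u_k(t)^2)$, the tail contribution from $u_k(t)\gg c$ is absolutely summable and uniformly bounded. The delicate part is the "head" $u_k(t)\lesssim c$, where the factors $|1-(1+2s)^2/u_k(t)^2|^{-1/(c-2)}$ for $s=1,\ldots,c-1$ appear with negative exponent and can individually be large; I would pair them with the compensating terms $|1-(1+2a^2)^2/u_k(t)^2|^{1/(c-2)}$ (or its analogue for odd $\ell$) and $1-1/u_k(t)^2$, exploiting the definition \eqref{ukt} together with the three-distance theorem for the rotation $\{t\alpha_{\sigma_k}\}$ to ensure that $u_k(t)$ is uniformly separated from each odd integer $1+2s$. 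The main obstacle is executing this pairing quantitatively: the exponents $(c-2)^{-1}$ are small but there are $c-1$ such factors, so one must extract genuinely simultaneous separation from the three-gap structure, not merely a worst-case bound factor by factor. Once a bound $Q\leq Q_0$ independent of $\alpha$ is secured, monotonicity of $P$ in $c$ --- hence in $a_k$ --- pushes the combined estimate $PQ$ strictly below $1$ precisely at the threshold $a_k=23$, and the conclusion then follows from Theorem \ref{thm2}.
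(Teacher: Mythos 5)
Your reduction to showing $C_k<1$ via Theorem \ref{thm2} and Corollary \ref{cor:C_k}, with $k$ the index of the maximal digit, is exactly the paper's starting point, and your Stirling estimate $P\asymp e/c$ for the scalar prefactor is correct. The fatal step is the plan to bound the remaining infinite product $Q=C_k/P$ by a constant $Q_0$ independent of $\alpha$: no such constant exists. For fixed $a_k$ the quantity $c=q_{\ell+1}+p_\ell$ satisfies $a_kq_\ell<c<(a_k+2)q_\ell$ (a consequence of \eqref{ckekformula}), so $c$ is unbounded over the family of $\alpha$ with $\max_j a_j=a_k$. Each factor $|G(2s+1)|=\prod_t|1-(1+2s)^2/u_k(t)^2|$ occurring in $Q$ with exponent $-1/(c-2)$ is of size $\Theta\bigl(a_k\,\mathrm{dist}(2s+1,\mathcal{U})/(2s+1)\bigr)$ --- this is the content of Theorem \ref{thm:Gbounds} --- so $\prod_{s=0}^{c-1}|G(2s+1)|=(\Theta(a_k/c))^{c}$ up to subexponential corrections, whence $Q\asymp \Theta(c/a_k)\gtrsim q_\ell\to\infty$. (Relatedly, your claim that the tail $u_k(t)\gg c$ contributes a uniformly bounded amount is not true term-by-term: the negative-exponent factors alone contribute on the order of $q_\ell$ to $\log Q$ and are only partially cancelled by the $(1+2a^2)$-factor.) The entire point of the argument is that the growth of $Q$ in $c$ must be cancelled against the decay of $P$: the factorial $c!$ (resp.\ $\prod_{s=1}^c|s-a|$) in the prefactor is played off against $\prod_{s=0}^{c-1}(2s+1)=(2c-1)!!$ arising from the lower bound $|G(2s+1)|\gtrsim\mathrm{dist}(2s+1,\mathcal{U})/(2s+1)$, and only after this cancellation does one obtain $C_k\lesssim \pi e^{1+f(a_k)}/(2a_k)$ times correction terms, which is what produces a threshold in $a_k$ rather than in $c$. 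Bounding $P$ and $Q$ separately by constants cannot succeed.

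What your proposal is then missing --- and what occupies Sections \ref{sec:Gx} and \ref{sec:sizeCk} of the paper --- is a lower bound on $\prod_{s=0}^{c-1}\mathrm{dist}(2s+1,\mathcal{U})$ with the correct exponential rate $(\mathrm{const}\cdot a_k)^{c}$. Your appeal to the three-distance theorem for ``simultaneous separation'' has no quantitative content as stated. The paper instead (i) controls the perturbations $\delta_t=1-2\{t\alpha_{\sigma_k}\}$ in aggregate via a discrepancy estimate and summation by parts (Lemma \ref{lem:discr}) to prove Theorem \ref{thm:Gbounds} with explicit constants, and (ii) shows that each $t\le q_\ell$ absorbs roughly $a_k$ values of $s$, exactly one of which yields a small distance $\|R_t\|$ while the others yield distances bounded below by $1,2,\ldots,a_k-1$; the product $\prod_t\|R_t\|$ is then controlled through the bijection $t\mapsto tp_\ell \bmod q_\ell$ (Lemmas \ref{lem:pi2two} and \ref{lem:pi2one}, culminating in Lemma \ref{lem:gprodlower}). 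Without this two-scale counting the numerical threshold $a_k\ge 23$ cannot be extracted.
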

\begin{rem}
Recall that it was shown by Lubinsky that $\liminf P_N(\alpha)=0$ whenever $\alpha$ has unbounded coefficients in its continued fraction expansion \cite{lubinsky}. In fact, Lubinsky made the more striking observation that there exists a cutoff value $a_k\geq K$ for which Theorem \ref{thm:ak23} is true, not only for quadratic irrationals but for any irrational $\alpha$.   Note, however, that Lubinsky's approach merely tells us that $K \approx e^{800}$ will suffice. 
Theorem \ref{thm:ak23} is thus a significant improvement of the best known cutoff value $K$ for quadratic irrationals.
\end{rem}
Theorem \ref{thm:ak23} can be seen as a partial analogue of the second part of the aforementioned theorem by Aistleitner, Technau and Zafeiropoulos. We will not attempt to imitate the first part of their result, stating that $\liminf P_N(\alpha) >0$ for sufficiently small values of $\max_i a_i$.   It will be clear from the proof of Theorem 3 that the role played by the period length $\ell$ is not fully understood, and in light of this we leave the following open problems for further discussion. 
\begin{ques*}
Let $\alpha=[0;\overline{a_1, a_2, \ldots, a_{\ell}}]$ be a quadratic irrational with $a_k = \max_i a_i$.
\begin{itemize}[itemsep=5pt]
\item According to Theorem \ref{thm:ak23}, there exists an integer $K=K_\ell\geq 1$ such that \eqref{sudler_behaviour} holds whenever $a_k \geq K_\ell$, and $K_{\ell}\leq 23$ for all period lengths $\ell$. However, if we fix $\ell$, what is then the optimal value of $K_\ell$? We will see in the proof (see Section \ref{sec:proofthm3}) that for odd periods $\ell$, Theorem \ref{thm:ak23} holds for $K= 22$. Moreover, for the special case when $\alpha =[0;\overline{1, a_2}]$, Theorem \ref{thm:ak23} holds for $K=21$, and the plots in Figure \ref{fig:alph1a_2} and \ref{fig:alph2a_2} suggest that we can actually do \emph{much} better. This brings us to the following question.
\item Is it possibly true that  
if $a_k \geq 6$, then $\liminf \limits_{N \to \infty} P_N(\alpha) = 0$ and $\limsup \limits_{N\to\infty}\dfrac{P_N(\alpha)}{N} = \infty$? In other words, is $K_\ell \leq 6$ for all $\ell\geq 1$?  Numerical evidence seems to suggest that the answer is positive, and that the threshold value $K=6$ established for irrationals $\beta = [0;b,b,\ldots]$ in \cite{ATZ20} might in fact be a universal bound for all quadratic irrationals. 
\end{itemize} 
\end{ques*}

Finally, we point out that Aistleitner and Borda have shown the following duality in \cite{chris_bence}: for any badly approximable $\alpha,$ we have 
\[ \liminf_{N\to \infty} P_N(\alpha) = 0  \quad \text{ if and only if } \quad \limsup_{N\to \infty} \frac{P_N(\alpha)}{N} = \infty. \]
Thus for a fixed period length $\ell \geq 2,$ giving a complete characterisation of the quadratic irrationals $\alpha$
for which \eqref{sudler_behaviour} holds also determines those irrationals $\alpha$ for which $\liminf\limits_{N\to \infty} P_N(\alpha) > 0$ and $\limsup\limits_{N\to\infty}P_N(\alpha)/N < \infty.$

\subsection{Oragnization of the paper}
The remainder of the paper is organized as follows. Theorems \ref{thm:limitFunc} and \ref{thm2} are proved in Sections \ref{sec:proofthm1} and \ref{sec:proofthm2}, respectively. In Section \ref{sec:Gx}, we analyse the product 
\begin{equation*}
    G(x) = \prod_{t=1}^{\infty} \left( 1- \frac{x^2}{u_k(t)^2}\right),
\end{equation*}
with $u_k(t)$ as defined in \eqref{ukt}. This product plays a crucial role in the expressions for $C_k$ in Corollary \ref{cor:C_k}. Note that the sequence $\cu = (u_k(t))_{t \in \mathbb{N}}$ can be viewed as a perturbation of the arithmetic progression $(2t/|e_kc_k|)_{t \in \mathbb{N}}$,  so it is natural to compare $G(x)$ to the product
\begin{equation*}
    \prod_{t=1}^{\infty} \left( 1-\frac{x^2}{\left( 2t/|e_kc_k|\right)^2}\right).
\end{equation*}
In doing so, we obtain Theorem \ref{thm:Gbounds}, which tells us that 
\begin{equation}\label{eq:Gboundpreversion}
K_1 \frac{\text{dist} (x,\cu)}{|x|} \leq |G(x)| \leq K_2 \frac{1}{|x|},
\end{equation}
for appropriate constants $K_1$ and $K_2$.

In Section \ref{sec:sizeCk} we use Corollary \ref{cor:C_k} and Theorem \ref{thm:Gbounds} to find an upper bound on $C_k$ in \eqref{constants} for both odd and even periods $\ell$. It turns out that $C_k$ can be bounded by expressions which clearly decrease to zero as $a_k \to \infty$ (see Theorems \ref{thm:boundCk} and \ref{thm:boundCkodd}). The speed of decay depends on the constants $K_1$ and $K_2$ in \eqref{eq:Gboundpreversion}, and for this reason we treat separately the case $\alpha=[0;\overline{a_1, a_2}]$ where either $a_1=1$ or $a_2=1$ (as better constants $K_1$ and $K_2$ can then be found).

Finally, in Section \ref{sec:proofthm3} we use the bounds obtained for $C_k$ in Section \ref{sec:sizeCk} to show that $C_k<1$ whenever $\alpha=[0;\overline{a_1, a_2, \ldots , a_k}]$ with $\max_j a_j \geq 23$. By Theorem \ref{thm2}, this proves Theorem \ref{thm:ak23}.

\section{Proof of Theorem \ref{thm:limitFunc}\label{sec:proofthm1}}
Theorem \ref{thm:limitFunc} states that when $\beta=[0;b_1,\ldots,b_h,\overline{a_1, \ldots , a_{\ell}}]$, the perturbed Sudler products $P_{q_n}(\beta, \varepsilon)$ in \eqref{perturbedproductnewdef} converge along subsequences $(q_{h + m \ell +k})_{m=1}^{\infty}$ to explicit limit functions 
\[G_k(\beta, \varepsilon) = \lim_{m \to \infty}  P_{q_{h+m \ell +k}}(\beta, \varepsilon), \quad k=1, \ldots , \ell. \]
 We first present the proof of Theorem \ref{thm:limitFunc} for the purely periodic quadratic irrational $\alpha=[0;\overline{a_1,\ldots,a_\ell}]$ (i.e. when $h=0$). We then briefly explain how the proof can be generalised for arbitrary quadratic irrationals $\beta=[0;b_1,\ldots,b_h,\overline{a_1, \ldots , a_{\ell}}].$ 
 
 Our first observation is that $P_{q_n}(\alpha, \varepsilon)$ can be decomposed into a product of three factors. 

\begin{lem}\label{lem:decompPqnPerturbed}
	Let $\alpha=  [0;\overline{a_1,\ldots, a_{\ell} }]$ and let $q_n$ denote the denominator of the $n$-th convergent of $\alpha$. Then for any $\ve\in \mathbb{R}$,  
	$$P_{q_n}(\alpha,\varepsilon)=A_n(\alpha, \varepsilon) \cdot B_n(\alpha)  \cdot  C_n(\alpha,\varepsilon),$$
	where 
	\begin{align*}
		A_n(\alpha,\varepsilon) &= 2q_n\Big|\sin\pi\Big( \Lambda_n + (-1)^{n+1}\frac{\varepsilon}{q_n}\Big)\Big|\\
		B_n(\alpha) &= \left| \prod_{t=1}^{q_{n}-1} \frac{s_{n}(t)}{2 \sin (\pi t / q_{n})} \right|,
		\label{eq:Bm} \\
		C_n(\alpha,\varepsilon) &= \prod_{t=1}^{q_n-1}\left(1-\frac{s^2_n(0,\varepsilon)}{s^2_n(t)}\right)^{\frac12} , \\
	\end{align*}
 and
	\begin{align*}	
		s_{n}(0,\varepsilon)=2 \sin \pi\left(\frac{\Lambda_n}{2} + (-1)^{n+1}\frac{\varepsilon}{q_n}\right), \quad s_{n}(t) =2 \sin \pi\left(\frac{t}{q_{n}} - |\Lambda_{n}| \left(\left\{ \frac{
			tq_{n-1}} {q_{n}}\right\}-\frac{1}{2}\right)\right).
	\end{align*}
\end{lem}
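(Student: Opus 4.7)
The plan is to peel off the $r=q_n$ factor, which immediately gives $2|\sin\pi(\Lambda_n + (-1)^{n+1}\varepsilon/q_n)| = A_n(\varepsilon)/q_n$ (with the outstanding $q_n$ to be supplied at the end by the cyclotomic identity), and to treat the remaining product
\[
\prod_{r=1}^{q_n-1} 2\bigl|\sin\pi(r\alpha+\tilde\varepsilon)\bigr|, \qquad \tilde\varepsilon := (-1)^{n+1}\varepsilon/q_n,
\]
via the involution $r \mapsto q_n - r$. The factor $q_n$ and the denominators $2\sin(\pi t/q_n)$ in $B_n$ will emerge at the very end from $\prod_{t=1}^{q_n-1} 2\sin(\pi t/q_n) = q_n$.

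For each pair $(r, q_n-r)$, the congruence $(q_n-r)\alpha \equiv \Lambda_n - r\alpha \pmod{\mathbb{Z}}$ combined with the identity $\sin A \sin B = \sin^2\tfrac{A+B}{2} - \sin^2\tfrac{A-B}{2}$, applied with $A = \pi(r\alpha + \tilde\varepsilon)$ and $B = \pi(\Lambda_n - r\alpha + \tilde\varepsilon)$, yields
\[
2\bigl|\sin\pi(r\alpha+\tilde\varepsilon)\bigr|\cdot 2\bigl|\sin\pi((q_n-r)\alpha+\tilde\varepsilon)\bigr| = \bigl|s_n(0,\varepsilon)^2 - \bigl(2\sin\pi(r\alpha - \Lambda_n/2)\bigr)^2\bigr|,
\]
since $s_n(0,\varepsilon) = 2\sin\pi(\Lambda_n/2 + \tilde\varepsilon)$ by definition. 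It remains to identify $2|\sin\pi(r\alpha - \Lambda_n/2)|$ with $s_n(t)$ for a suitable index $t$.

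Writing $r\alpha = rp_n/q_n + r\Lambda_n/q_n$ and setting $t := rp_n \bmod q_n$ gives a bijection of $\{1,\ldots,q_n-1\}$ since $\gcd(p_n, q_n)=1$. The classical identity $p_n q_{n-1} - p_{n-1} q_n = (-1)^n$ yields $r \equiv (-1)^n t q_{n-1} \pmod{q_n}$, so $r/q_n$ equals $\{tq_{n-1}/q_n\}$ for even $n$ and $1 - \{tq_{n-1}/q_n\}$ for odd $n$; together with $\Lambda_n = (-1)^{n+1}|\Lambda_n|$ this produces $|\sin\pi(r\alpha - \Lambda_n/2)| = |s_n(t)/2|$ in both parities. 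The self-paired case $r = q_n/2$ (which occurs only when $q_n$ is even, forcing $p_n$ to be odd) is handled by the direct identity $\sin\pi(r\alpha + \tilde\varepsilon) = \pm\cos\pi(\Lambda_n/2 + \tilde\varepsilon)$, which gives exactly $|s_n(q_n/2)^2 - s_n(0,\varepsilon)^2|^{1/2}$. Using $s_n(q_n-t) = s_n(t)$, the pair products collect into $\prod_{t=1}^{q_n-1}|s_n(t)^2 - s_n(0,\varepsilon)^2|^{1/2}$; multiplying and dividing by $q_n = \prod_{t=1}^{q_n-1} 2|\sin(\pi t/q_n)|$ separates this into $q_n \cdot B_n \cdot C_n(\varepsilon)$, and recombining with the $r=q_n$ factor yields $A_n(\varepsilon) B_n C_n(\varepsilon)$. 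The only real technical point is the sign bookkeeping across the bijection $r \leftrightarrow t$ and the parity of $n$; everything else is routine trigonometric manipulation together with the cyclotomic identity.
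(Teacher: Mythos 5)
Your proof is correct and follows essentially the same route as the argument the paper omits (referring to Lemma 5.1 of \cite{mv} and Lemma 4.2 of \cite{GN18}): pairing $r$ with $q_n-r$, applying $\sin A\sin B=\sin^2\tfrac{A+B}{2}-\sin^2\tfrac{A-B}{2}$, reindexing via $t\equiv rp_n \pmod{q_n}$, and invoking the cyclotomic identity $\prod_{t=1}^{q_n-1}2\sin(\pi t/q_n)=q_n$. The sign bookkeeping via $\Lambda_n=(-1)^{n+1}|\Lambda_n|$ and $r\equiv(-1)^n t q_{n-1}\pmod{q_n}$, as well as the self-paired case $r=q_n/2$, are all handled correctly.
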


Lemma \ref{lem:decompPqnPerturbed} is the natural analogue of Lemma $5.1$ of \cite{mv} for the product $P_{q_n}(\phi)$ and Lemma $4.2$ in \cite{GN18} for $P_{q_n}(\alpha)$. We omit the proof since it is nearly identical, the only difference being that it involves an additional term within the argument of the sine. We continue by analysing the behaviour of each of the three factors $A_n(\alpha,\ve), B_n(\alpha) \text{ and } C_n(\alpha,\ve)$. 

The factor $B_n=B_n(\alpha)$ is independent of the perturbation argument $\ve$, and it is shown in \cite{GN18} that for each $k=1,2,\ldots, \ell$ the limit 
$$B^{(k)} \, = \, \lim_{m\to\infty}B_{m \ell +k}$$ 
exists. Regarding the factor $A_{m \ell +k}(\alpha,\ve)$, we have 
$$ A_{m \ell +k}(\alpha,\varepsilon) = 2\pi \big||c_ke_k|+\varepsilon\big| + \calO(b^{2m}), \quad m\to\infty,   $$
therefore  $A_{m \ell +k}(\alpha,\ve)$ converges to $  2\pi \big||c_ke_k|+\varepsilon\big| .$
Finally we need to establish convergence for the factor $C_n(\alpha,\varepsilon)$. Here we can argue as in \cite[Section $6$]{mv}, taking into account that the factor $s_n(0,\ve)$ depends on the parameter $\ve$ and satisfies
$$ |s_{m \ell +k}(0,\ve)|  \sim \pi\left| |e_kb^m |+ 2\ve \right|  + \calO(b^{2m}), \quad m\to\infty,  $$
and also 
\begin{align*}
s_{m \ell +k}(t)=\pi |e_kb^m|u_{k}(t) + \calO(|b|^{m/5}), \quad m\to\infty.
\end{align*}
The same arguments as in \cite{mv} imply that for any $\varepsilon \in \mathbb{R}$, \[ \lim_{m \to \infty}C_{m\ell +k}(\alpha,\varepsilon) = \prod_{t=1}^{\infty}\left|1-\frac{\left(1+\frac{2\varepsilon}{|e_kc_k|}\right)^2}{u_{k}(t)^2}\right| \] 
 
In view of Lemma \ref{lem:decompPqnPerturbed} we deduce that for $k=1,2,\ldots, \ell $ the limiting function $G_k$ satisfies 
\begin{equation} \label{gktemporary} 
G_k(\alpha,\ve) = 2\pi\big||e_kc_k| + \varepsilon \big| B^{(k)} \prod_{t=1}^{\infty}\left|1-\frac{\left(1+\frac{2\varepsilon}{|e_kc_k|}\right)^2}{u_{k}(t)^2}\right|, 
\end{equation}
where $u_k(t)$ is defined in \eqref{ukt}. Arguing as in the proof of Theorem $1$ in \cite{ATZ20} we can show that the convergence is locally uniform. Now we fix a value of $k=1,\ldots, \ell$ and consider indices $n=m\ell + k, m=1,2,\ldots$ In order to determine the formula of $G_k$ we will use relation \eqref{qnrel}. We  distinguish two cases depending on the parity of the period length $\ell$. 

$\bullet$ If $\ell \equiv 0 \pmod 2$, then \eqref{qnrel} gives $c(\alpha)q_n = q_{n+\ell} + q_{n-\ell}$, and thus
\begin{equation}\label{evenequation}
P_{cq_n}(\alpha) = P_{q_{n-\ell}+ q_{n+\ell}} (\alpha) \, .
\end{equation}
The left hand side in \eqref{evenequation} is
\begin{eqnarray*}
P_{cq_n}(\alpha) &=& \prod_{r=1}^{cq_n}2|\sin \pi r\alpha| = \prod_{s=0}^{c-1}\prod_{r=1+sq_n}^{(s+1)q_n}2|\sin \pi r\alpha| = \prod_{s=0}^{c-1}\prod_{r=1}^{q_n}2|\sin \pi(r\alpha + sq_n \alpha) | \\
&\stackrel{\eqref{ntherror}}{=} & \prod_{s=0}^{c-1}\prod_{r=1}^{q_n}2|\sin\pi(r\alpha + (-1)^{n+1}s|e_kb^m| )| = \prod_{s=0}^{c-1}P_{q_n}(\alpha, sq_n|e_kb^m|) \, ,
\end{eqnarray*}
while the right hand side is 
\begin{eqnarray*}
 P_{q_{n-\ell}+ q_{n+\ell}} (\alpha)&=&\!\!\! \prod_{r=1}^{q_{n+\ell}+ q_{n-\ell}} 2|\sin \pi r\alpha | = \prod_{r=1}^{q_{n-\ell}} 2|\sin \pi r\alpha | \cdot \prod_{r=1+q_{n-\ell}}^{q_{n+\ell}+q_{n-\ell}}2|\sin \pi r\alpha | \\
&=& P_{q_{n-\ell}}(\alpha)\cdot \prod_{r=1}^{q_{n+\ell}}2|\sin \pi(r\alpha+q_{n-\ell  }\alpha)| \\
&\stackrel{\eqref{ntherror}}{=}& P_{q_{n-\ell}}(\alpha)\cdot \prod_{r=1}^{q_{n+\ell}}2|\sin \pi(r\alpha+ (-1)^{n+\ell +1}|e_kb^{m-1}|)| \\
&=& P_{q_{n-\ell}}(\alpha) P_{q_{n+\ell}}(\alpha, q_{n+\ell}|e_kb^{m-1}| ) .
\end{eqnarray*}

Since the functions $P_{q_n}(\alpha,\ve)$ converge locally uniformly, letting $m\to \infty$ in \eqref{evenequation} and taking \eqref{ntherror} into account we obtain 
\begin{equation} \label{greleven}
\prod_{s=0}^{c-1}G_k(\alpha, s|c_ke_k|)  = G_k(\alpha,0) \,  G_k\!\left(\alpha, \frac{|c_ke_k|}{|b|^2}\right) .
\end{equation}
Substituting $G_k$ from \eqref{gktemporary} in \eqref{greleven} we obtain \eqref{geven}. 

$\bullet$ If $\ell \equiv 1 \pmod 2$, then \eqref{qnrel} becomes $q_{n+\ell}= c(\alpha)q_n + q_{n-\ell}$, so 
\begin{eqnarray*} 
\frac{P_{q_{n+\ell}}(\alpha)}{P_{q_{n-\ell}}(\alpha)} &=& \prod_{r=1}^{q_{n+\ell}-q_{n-\ell}}2|\sin\pi( r \alpha +q_{n-\ell}\alpha)|=\prod_{r=1}^{c(\alpha)q_{n}}2|\sin\pi (r \alpha +q_{n-\ell} \alpha)|\nonumber  \\
&=& \prod_{s=0}^{c(\alpha)-1}   \prod_{r=1}^{q_n}2|\sin \pi(r\alpha + sq_n\alpha + q_{n-\ell}\alpha )|\\
& \stackrel{\eqref{ntherror}}{=} & \prod_{s=0}^{c(\alpha)-1} \prod_{r=1}^{q_n}2| \sin \pi(r\alpha + (-1)^{n+1}s|e_kb^m|-(-1)^{n+1}|e_kb^{m-1}| )| \\
&=& \prod_{s=0}^{c(\alpha)-1} P_{q_n}\left(\alpha, q_n|e_kb^m|\left(s-\frac{1}{|b|} \right) \right) .
\end{eqnarray*}

By the main result of \cite{GN18} we know that the sequence $P_{q_n}(\alpha)$ converges to a limit $C_k>0$, hence letting $m\to\infty$ in the above equality we get 
\begin{equation*} 
\prod_{s=0}^{c(\alpha)-1}G_k\left(\alpha, |c_ke_k|( s- \tfrac{1}{|b|})\right) =1.
\end{equation*}
Substituting this into \eqref{gktemporary} we obtain \eqref{godd}. This completes the proof of Theorem \ref{thm:limitFunc} when $\alpha=[0;\overline{a_1,\ldots,a_k}]$ is a purely periodic quadratic irrational.

  We now deal with quadratic irrationals for which the length of the pre-period is $h\geq 1.$ If we consider the irrational $\beta=[0;b_1,\ldots,b_h,\overline{a_1,\ldots,a_\ell}]$ and $\alpha=[0;\overline{a_1,\ldots,a_\ell}]$ is as before, we can still find a factorisation
\[ P_{q_n}(\beta, \varepsilon) = A_n(\beta,\varepsilon) \cdot B_n(\beta) \cdot C_n(\beta,\varepsilon),  \]
where the three factors are defined similarly to the purely periodic case; the only difference which appears, is that the parameters $e_k$ and $c_k$ are replaced by   different parameters $c_{h,k}$ and $e_{h,k}$. The definition of these parameters is given in the proof of Corollary 1.3 in \cite{GN18}. \par 
The limits of the three factors of $P_{q_n}(\beta,\varepsilon)$ are: 
\begin{align*}
    \lim_{m\to \infty} A_{h+m\ell + k}(\beta,\varepsilon) & = 2\pi \big| |c_{h,k}e_{h,k}| + \varepsilon \big|, \\
    \lim_{m\to\infty} B_{h+m\ell + k }(\beta)\quad &  = B^{(h,k)}, \quad \text{ and }\\
    \lim_{m\to\infty} C_{h+ m\ell + k}(\beta,\varepsilon) & =  \prod_{t=1}^{\infty} \left|1-\frac{\left(1+\frac{2\varepsilon}{|e_{h,k}c_{h,k}|}\right)^2}{u_{k}(t)^2}\right|.
\end{align*}
We therefore deduce that for $k=1,2,\ldots,\ell$, the subsequence $P_{q_{h+\ell m + k}}(\beta,\varepsilon)$ converges to some limit function $G_k(\beta,\varepsilon).$  We now invoke the fact that 
\[ |c_{h,k}e_{h,k}| = |c_ke_k|, \qquad k=1,\ldots, \ell, \]
(established in the proof of Corollary 1.3 in \cite{GN18}) to deduce that for all $k=1,\ldots, \ell,$  
\[\lim_{m \to \infty} P_{q_{h+m\ell + k}}(\beta, \varepsilon) = G_{k}(\beta, \varepsilon) = \lambda_k G_{k}(\alpha, \varepsilon),\]
for some constant $\lambda_k$. Finally, since we know from \cite[Corollary 1.3]{GN18} that $G_{k}(\beta, 0)= G_k(\alpha, 0)$  for each $k=1, \ldots , \ell$, it follows that $\lambda_k=1$ for every $k$. This shows that adding a pre-period to the continued fraction expansion of $\alpha$ leaves the  limit functions $G_k$ unchanged, completing the proof of Theorem \ref{thm:limitFunc}.

\section{Proof of Theorem  \ref{thm2}\label{sec:proofthm2}}

We now prove Theorem \ref{thm2}, namely show that if for the irrational
$\beta=[0;b_1,\ldots,b_h,\overline{a_1,\ldots,a_\ell}]$, we have $G_k(\beta, 0)=C_k<1$ for some $1 \leq k \leq \ell$, then 
\[\liminf_{N\to \infty} P_N(\beta)=0 \quad \text{ and } \quad \limsup_{N \to \infty} \frac{P_N(\beta)}{N} = \infty .\]
Let $k_0$ denote the index for which $C_{k_0}<1$, and fix some $\lambda$ such that $C_{k_0} < \lambda <1$. Since $G_{k_0}$ is continuous at $0$ and $G_{k_0}(\beta,0)=C_{k_0}<\lambda$, there exists $\eta>0$ such that $G_{k_0}(\beta, \ve)<\lambda $ for all $|\ve|<\eta.$ Consider a subsequence $(m_i)_{i=1}^{\infty}$ of $(q_{h+m\ell+k_0})_{m=1}^{\infty}$ such that \begin{enumerate}
	\item[(i)] $m_{i+1} \geq 2m_i,\quad  i=1,2,\ldots \quad$ and 
	\item[(ii)]  $ \|m_{i+1} \beta\| < \dfrac{\eta}{4m_i} ,\quad i=1,2,\ldots$ \vspace{-3mm}
\end{enumerate}
where $\| x\| = \min\{|x-k|: k\in \mathbb{Z}\}$ denotes the distance of $x\in\mathbb{R}$ to the nearest integer. We set
$$ N_i =  m_i + \ldots + m_1  \quad \text{and} \quad
M_j = N_i - N_j, \quad   i\geq 1,\quad   j=1,\ldots, i .   $$
Then
\begin{eqnarray*}
P_{N_i}(\beta)\, & = \,& \prod_{r=1}^{N_i} 2|\sin \pi r \beta| \,\, = \,\, \prod_{j=1}^{i}\prod_{r=1}^{m_j}2|\sin \pi (r+M_j) \beta| \nonumber\\
& = \, & \prod_{j=1}^{i} \prod_{r=1}^{m_j}2 \left|\sin \pi\left(r\beta + \frac{\ve_j}{m_j}\right)\right| \,\, = \,\, \prod_{j=1}^{i}P_{m_j}(\beta,(-1)^{\delta_j}\ve_j) , \label{prod_nk}
\end{eqnarray*}
where $\ve_j= \pm m_j\|M_j \beta\|$ and $\delta_j = 0 \text{ or } 1.$   We see that
\begin{eqnarray*} 
 |\ve_j|\,\,  \leq \,\,   m_j \left( \|m_{j+1} \beta\| + \ldots + \|m_k \beta\| \right) \, < \, \frac{\eta}{2} \cdot
\end{eqnarray*}
By the choice of $\eta$, this implies that $P_{m_j}(\beta,(-1)^{\delta_j}\ve_j)< \lambda$ for all $j$ large enough, hence $ P_{N_i}(\beta) \ll \lambda^i, \, i\to\infty$. This shows that $\liminf\limits_{N\to\infty}P_N(\beta)=0$.

Now set also $T_i = m_{i+1}- (N_i+1), \, i\geq 1$, so that
\begin{equation} \label{ptk}
P_{T_i}(\beta) = \frac{P_{m_{i+1}-1}(\beta)}{\prod\limits_{r=N_i+1}^{m_{i+1}-1}\!\!2|\sin \pi r \beta|} = \frac{P_{m_{i+1}-1}(\beta)}{\prod\limits_{j=1}^{i}\prod\limits_{r=1}^{m_j} 2|\sin \pi (r+ M_j-m_{i+1}) \beta|} \cdot
\end{equation}

At this point we need to point out a simple fact which follows from the proof of Theorem $1.1$ in \cite{GN18} but is not explicitly stated in the text. If $(C_k)_{k=1}^{\ell }$ are the constants in \eqref{constants}, then for each $k=1,\ldots, \ell $ we have 
$$ \lim_{m\to\infty} \frac{P_{q_{h+m \ell + k}-1}(\beta)}{q_{h+ m \ell + k}} = \frac{C_k}{2\pi|c_ke_k|} \, ,  $$
where $(c_k)_{k=1}^{\ell }$ and $(e_k)_{k=1}^{\ell }$ are defined in \eqref{ekck}. Armed with this observation we deduce that the numerator in \eqref{ptk} is $P_{m_{i+1}-1}(\beta)  \asymp m_{i+1} \asymp T_i,\, i\to\infty$, while for the denominator in \eqref{ptk} we can show arguing as in the previous step that 
$$ \liminf_{i\to\infty}  \prod\limits_{j=1}^{i}\prod\limits_{r=1}^{m_j} 2|\sin \pi (r+ M_j-m_{i+1}) \beta| = 0.  $$ 
Therefore $$\limsup_{N\to\infty}\frac{P_N(\beta)}{N} =  \limsup_{i\to\infty}\frac{P_{T_i}(\beta)}{T_i} = \infty .$$

\section{A perturbed sinc product \label{sec:Gx}}
In Sections \ref{sec:sizeCk} and \ref{sec:proofthm3}, our aim will be to determine when the limit $C_k$ in \eqref{constants} is guaranteed to be less than one. Corollary \ref{cor:C_k} suggests that we will need to differentiate between two cases, depending on the parity of the period length $\ell$. Common to both cases is the need for appropriate upper and lower bounds on the function 
\begin{equation} \label{gdef}
G(x) = \prod_{t=1}^{\infty}\left(1 - \frac{x^2}{u_k(t)^2} \right) , \qquad x\in \mathbb{R},  
\end{equation}
where we recall from \eqref{ukt} that 
\begin{equation*}
u_k(t) = 2 \left( \frac{t}{|e_kc_k|}-\{t\alpha_{\sigma_k}\} + \frac{1}{2} \right).  
\end{equation*}

Let us now set
\begin{equation}\label{eq:Aanddelta}
      A=2|e_kc_k|^{-1} \quad \text{ and } \quad \delta_t=1-2\{t\alpha_{\sigma_k}\},
\end{equation}
so that $u_k(t) = At+\delta_t$.
The function $G(x)$ in \eqref{gdef} can then be seen as a perturbed version of the well-known product 
\begin{equation*}
\frac{\sin (\pi A^{-1}x)}{\pi A^{-1}x} = \prod_{t=1}^{\infty} \left(1-\frac{x^2}{(At)^2} \right).
\end{equation*}
It is not difficult to show that if the perturbations $\delta_t$ satisfy $|\delta_t|\leq \delta <A/4$ $(t\in \mathbb{N})$ for some $\delta>0$, then the function $G$ obeys the bounds \begin{equation*}
    C_1 \frac{\mathrm{dist}\left(x, \cu \right)}{|x|^{1+4\delta/A}} \leq |G(x)| \leq C_2 \frac{1}{|x|^{1-4\delta/A}},
\end{equation*}
for constants $C_1$ and $C_2$, where $\mathrm{dist}\left(x, \cu \right)=\min\{|x-At-\delta_t| \, : \, t\in \mathbb{N}\}$. This is related to Kadec's $1/4$-rule \cite{kadec}, and can e.g.\ be seen as a consequence of \cite[Lemma 4]{avdonin}. Due to the low-discrepancy property of Kronecker sequences, the sequence $(\delta_t)_{t}$ satisfies a much stronger condition, which in turn enables us to establish stronger bounds on $G$.

\begin{thm}\label{thm:Gbounds}
Let $\alpha=[0;\overline{a_1, \ldots ,a_{\ell}}]$, and let $k \leq \ell$ be an index satisfying $a_k = \max_j a_j$. Then for $|x|\geq A/2 = |e_kc_k|^{-1}$, the function $G$ in \eqref{gdef} satisfies
\begin{equation} \label{Grate}
	\frac{2}{\pi}e^{-f(a_k)}\left(1 - \frac{2}{3Am}\right)\left(1- \frac{1}{Am}\right)^2 \, \frac{ \mathrm{dist}(x,\cu)}{|x|}\, \,\leq\, |G(x) |\,\, \leq\,\, \frac{14A}{9}e^{f(a_k)} \frac{1}{|x|}, 
\end{equation}
where the positive integer $m=m(x)\geq 1$ is such that $|Am-x| = \min\{|An -x| : n \in \mathbb{N} \}$ and 
\begin{equation}\label{eq:fa_k}
f(a_k) = \frac{13.7}{a_k}+\frac{1}{20 \log a_k}+\frac{1}{100}+\frac{2}{a_k^2}.
\end{equation}
For $|x|<A/2$, we have the bound 
\begin{equation}\label{eq:Gsmallx}
	\frac{2}{\pi}e^{-f(a_k)} \leq |G(x)| \leq 1.
\end{equation}
\end{thm}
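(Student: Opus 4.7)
\textbf{Proof proposal for Theorem \ref{thm:Gbounds}.} The strategy is to view $G(x)$ as a perturbation of the sinc product
\begin{equation*}
S(x) := \prod_{t=1}^{\infty}\left(1 - \frac{x^2}{(At)^2}\right) = \frac{\sin(\pi x/A)}{\pi x/A},
\end{equation*}
and to control the multiplicative ratio $G(x)/S(x)$ using the low-discrepancy of the Kronecker sequence $(\{t\alpha_{\sigma_k}\})_{t \geq 1}$, whose partial quotients are bounded by $a_k$. For $|x| \geq A/2$, I would first split off the (potentially small) near-resonant factor at $t = m(x)$:
\begin{equation*}
G(x) = \left(1 - \frac{x^2}{u_k(m)^2}\right) \cdot \prod_{t \neq m}\left(1 - \frac{x^2}{u_k(t)^2}\right).
\end{equation*}
The first factor equals $(u_k(m)-x)(u_k(m)+x)/u_k(m)^2$, whose absolute value is comparable to $2\,\mathrm{dist}(x,\cu)/(Am)$ up to the explicit corrections $(1 - 2/(3Am))(1-1/(Am))^2$ that appear in \eqref{Grate}; these come from replacing $u_k(m) = Am + \delta_m$ by $Am$ and from bounding $(u_k(m)+x)$ against $2|x|$.

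For the remaining tail I would compare it directly to $S(x)/(1 - x^2/(Am)^2)$ by writing
\begin{equation*}
\log\left|\frac{\prod_{t \neq m}(1-x^2/u_k(t)^2)}{\prod_{t \neq m}(1-x^2/(At)^2)}\right| = \sum_{t \neq m} \log\left|\frac{(At+\delta_t)^2 - x^2}{(At)^2 - x^2}\right| - 2\sum_{t\neq m}\log\!\left(1+\tfrac{\delta_t}{At}\right),
\end{equation*}
and Taylor-expanding each summand in the small parameter $\delta_t/(At)$. Each of these sums then has the form $\sum_t g(\{t\alpha_{\sigma_k}\})\, h_x(t)$ with $h_x$ smooth and decaying in $t$, and the mean-zero quantities $\delta_t = 1 - 2\{t\alpha_{\sigma_k}\}$ can be integrated against $h_x$ via a Koksma--Hlawka-type inequality. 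Because $\alpha_{\sigma_k}$ has largest partial quotient $a_k$, the star-discrepancy of $(\{t\alpha_{\sigma_k}\})_{t \leq N}$ obeys $D_N^\ast \ll (\log N + a_k)/N$, and carefully splitting the sum at $t \asymp a_k$ (short range handled by absolute bounds, long range by Abel summation) converts this into a bound of the form $|\log|G/S|| \leq f(a_k)$ with the explicit function in \eqref{eq:fa_k}. Combined with the elementary sinc estimates $|S(x)| \leq A/(\pi|x|)$ and $|\sin(\pi x/A)| \geq (2/\pi)|\pi(x-Am)/A|$ near $x = Am$, this produces both bounds in \eqref{Grate}. The range $|x| < A/2$ is handled separately: every factor of $G$ lies in $[0,1]$, giving $|G(x)| \leq 1$ immediately, while $S(x) \geq 2/\pi$ on $[-A/2,A/2]$ together with the bound $|G/S| \geq e^{-f(a_k)}$ yields \eqref{eq:Gsmallx}.

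\textbf{Main obstacle.} The genuinely delicate step is the explicit determination of $f(a_k)$ in Step 2. What is needed is a \emph{quantitative} Koksma-type inequality with sharp numerical constants for sums $\sum_t \delta_t h_x(t)$ where $h_x$ depends on $x$ in a non-trivial way (it has a pole near $t = |x|/A$, which is precisely where $t = m$ was excised). The peculiar constants $13.7$, $1/20$, $1/100$, $2$ strongly suggest an optimised split of the sum at a threshold $t \asymp a_k$ followed by explicit evaluation of the resulting tail integrals using the Ostrowski expansion of $t$ in base $(q_n(\alpha_{\sigma_k}))$. Keeping these constants uniform in $x$, and in particular ensuring that the dependence on $m = m(x)$ only enters through the factors $(1 - 2/(3Am))(1 - 1/(Am))^2$ explicitly displayed in \eqref{Grate}, is where the bulk of the technical work will lie.
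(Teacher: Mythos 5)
Your proposal follows essentially the same route as the paper: the same three-way split into the near-resonant factor at $t=m$, the multiplicative ratio of the perturbed to the unperturbed tail (controlled by first-order expansion in $\delta_t$ and Abel summation against the bounded partial sums of $\delta_t = 1-2\{t\alpha_{\sigma_k}\}$), and the residual sinc product, with the $|x|<A/2$ case handled exactly as you describe. The explicit constants in $f(a_k)$ do arise precisely as you anticipate, from a quantitative discrepancy bound for $\sum(\tfrac12-\{t\alpha_{\sigma_k}\})$ (Pinner's estimate) combined with a split of the Abel-summed series at a fixed threshold, so the remaining work is only numerical bookkeeping.
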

\begin{rem}
Notice that since $f(a_k) \to 0.01$ as $a_k \to \infty$, equation \eqref{Grate} reads
\begin{equation*}
    K_1 \frac{ \mathrm{dist}(x,\cu)}{|x|} \leq |G(x)| \leq K_2 \frac{1}{|x|},
\end{equation*}
with $K_1 \approx 2/\pi$ and $K_2 \approx 14A/9$ whenever $a_k$ and $x$ are large. Similar bounds can be established when $k$ is \emph{not} the index of a maximal continued fraction coefficient of $\alpha$, but the size of the constants $K_1$ and $K_2$ will then depend both on $a_k$ and on the size of $\max_j a_j$.
\end{rem}
We will see that the following is an immediate consequence of the proof of Theorem \ref{thm:Gbounds}.
\begin{cor}\label{cor:Gbound_m=1}
Suppose that we have $m(x) = 1$ in Theorem \ref{thm:Gbounds}, that is $A/2 \leq x \leq 3A/2$. Then $G(x)$ in \eqref{gdef} satisfies
\begin{equation*}
    	\frac{2}{\pi}e^{-g(a_k)}\left(1 - \frac{2}{3A}\right)\left(1- \frac{1}{A}\right)^2 \, \frac{ \mathrm{dist}(x,\cu)}{|x|}\, \,\leq\, |G(x) |\,\, \leq\,\, \frac{14A}{9}e^{g(a_k)} \frac{1}{|x|}, 
\end{equation*}
where 
\begin{equation}\label{eq:ga_k}
    g(a_k) = \frac{3.3}{a_k}+\frac{1}{80\log a_k}+\frac{1}{400}+\frac{2}{a_k^2} \cdot
\end{equation}
\end{cor}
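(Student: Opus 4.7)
The plan is to revisit the proof of Theorem \ref{thm:Gbounds} and specialize every step to the case $m(x)=1$, tracking where the improvement in the exponential constant $f(a_k) \to g(a_k)$ originates. Writing $u_k(t)=At+\delta_t$ with $A$ and $\delta_t$ as in \eqref{eq:Aanddelta}, one begins by extracting the single factor of $G(x)$ corresponding to the index $t=m$ closest to $x/A$:
$$|G(x)| \,=\, \left|1-\frac{x^{2}}{u_k(m)^{2}}\right|\, \prod_{t\neq m}\left|1-\frac{x^{2}}{u_k(t)^{2}}\right|.$$
The distinguished factor with $t=m$ is rewritten as $|u_k(m)-x|\cdot|u_k(m)+x|/u_k(m)^{2}$, and the tail product over $t\neq m$ is compared to the unperturbed sinc-type product with nodes at $At$ in place of $At+\delta_t$; this comparison is the content of the proof of Theorem \ref{thm:Gbounds} and yields the estimates \eqref{Grate}.

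First I would observe that the geometric factors $\bigl(1-2/(3Am)\bigr)$ and $\bigl(1-1/(Am)\bigr)^{2}$ in \eqref{Grate} arise purely from rewriting $|1-x^{2}/u_k(m)^{2}|$ in terms of $\mathrm{dist}(x,\cu)$ together with an estimate of $|u_k(m)+x|/u_k(m)^{2}$; substituting $m=1$ directly produces the factors $\bigl(1-2/(3A)\bigr)$ and $\bigl(1-1/A\bigr)^{2}$ stated in the corollary. No further argument is needed for this part.

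The heart of the matter is then to track the improvement from $f(a_k)$ to $g(a_k)$ in the exponential prefactor. The constant $f(a_k)$ comes from bounding two-sided tail sums of the form $\sum_{t\neq m}\log|1+\theta_t(x)|$ that measure the discrepancy between perturbed and unperturbed factors, the dominant contributions being from indices $t$ close to $m$. When $m=1$ this sum becomes one-sided (only $t\geq 2$ contributes), and the indices closest to $m=1$ lie further from $x$ on the relevant scale than in the generic case; each of the lower-order harmonic-type sums and the next-order tail terms shrinks by roughly a factor of four, which is exactly the ratio between the coefficients $13.7,\,1/20,\,1/100$ of $f(a_k)$ and $3.3,\,1/80,\,1/400$ of $g(a_k)$. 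The residual term $2/a_k^{2}$ reflects an $m$-independent boundary estimate coming from the portion of the product with $t$ very large, and therefore appears unchanged in both $f$ and $g$.

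The hard part will be to isolate these $m$-dependent contributions with sharp constants, but this is precisely what the proof of Theorem \ref{thm:Gbounds} already achieves; once that proof is in place, Corollary \ref{cor:Gbound_m=1} follows simply by reading off the bounds in the $m=1$ special case, with no new analytic input required.
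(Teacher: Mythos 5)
Your proposal is correct and follows essentially the same route as the paper: one retraces the proof of Theorem \ref{thm:Gbounds} with $m(x)=1$, notes that the sum $\sum_{t<m}\delta_t s_t$ is empty so that $\Pi_2(x) \leq \exp\left(\left|\sum_{t>m}\delta_t s_t\right| + 8/A^2\right) \leq e^{g(a_k)}$ using the one-sided bound already computed there, and the geometric factors $(1-2/(3Am))(1-1/(Am))^2$ specialize trivially to $m=1$. One minor correction to your heuristic: the coefficients of $g$ are not obtained because each term of $f$ ``shrinks by roughly a factor of four'' with indices lying farther from $x$ --- they are exactly the bound $3.3/a_k + 1/(80\log a_k) + 1/400$ on the one-sided sum $\sum_{t>m}\delta_t s_t$ established in the theorem's proof (the discarded bound on $\sum_{t<m}\delta_t s_t$ simply happens to carry about three quarters of the combined total), and the residual $2/a_k^2$ is the second-order error $|E| < 8/A^2$ from the $\log(1+x)$ expansion, not a large-$t$ boundary estimate.
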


Before we embark on the proof of Theorem \ref{thm:Gbounds}, we establish two preliminary results. The first concerns the size of $A=2|e_kc_k|^{-1}$. 
\begin{lem}
Let $1\leq k \leq \ell.$	We have 
	\begin{equation} \label{ckekformula}
	\frac{1}{|c_ke_k|} = a_k + \frac{p_{\ell}(\alpha_{\sigma_k})}{q_{\ell}(\alpha_{\sigma_k})} + \frac{p_{\ell}(\alpha_{\tau_k})}{q_{\ell}(\alpha_{\tau_k})} - \frac{2b}{q_{\ell}(\alpha_{\tau_k})}\,  \cdot
	\end{equation}
\end{lem}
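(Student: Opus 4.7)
The plan is to derive the identity \eqref{ckekformula} by a direct manipulation of the relations in \eqref{qlplusone}, with no additional machinery needed. The key observation is that the lemma's right-hand side is just the expansion of $(c(\alpha_{\tau_k}) - 2b)/q_\ell(\alpha_{\tau_k})$, which by the last identity in \eqref{qlplusone} equals $1/|c_ke_k|$.

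More precisely, I would begin by invoking the relation
\begin{equation*}
|c_ke_k| \,=\, \frac{q_\ell(\alpha_{\tau_k})}{c(\alpha_{\tau_k})-2b}
\end{equation*}
from \eqref{qlplusone}. Taking reciprocals and using the definition $c(\alpha) = q_{\ell+1}+p_\ell$ applied to the permuted irrational $\alpha_{\tau_k}$ (which is still a purely periodic quadratic irrational of period $\ell$, and for which $c(\alpha_{\tau_k}) = c(\alpha)$ by the first line of \eqref{qlplusone}), we obtain
\begin{equation*}
\frac{1}{|c_ke_k|} \,=\, \frac{q_{\ell+1}(\alpha_{\tau_k}) + p_\ell(\alpha_{\tau_k}) - 2b}{q_\ell(\alpha_{\tau_k})} \,=\, \frac{q_{\ell+1}(\alpha_{\tau_k})}{q_\ell(\alpha_{\tau_k})} + \frac{p_\ell(\alpha_{\tau_k})}{q_\ell(\alpha_{\tau_k})} - \frac{2b}{q_\ell(\alpha_{\tau_k})}\, .
\end{equation*}
The first summand can now be rewritten by appealing to the fourth identity in \eqref{qlplusone}, namely
\begin{equation*}
\frac{q_{\ell+1}(\alpha_{\tau_k})}{q_\ell(\alpha_{\tau_k})} \,=\, a_k + \frac{p_\ell(\alpha_{\sigma_k})}{q_\ell(\alpha_{\sigma_k})}.
\end{equation*}
Substituting this back yields \eqref{ckekformula} directly.

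Since every ingredient is already on the table, there is no real obstacle here. The only thing to double-check is that the identity $c(\alpha) = q_{\ell+1}+p_\ell$ from \eqref{abc} transfers correctly to $\alpha_{\tau_k}$, but this is guaranteed because $\alpha_{\tau_k}$ is itself a purely periodic quadratic irrational of period length $\ell$, so the same definitional formula \eqref{abc} applies with $\alpha$ replaced by $\alpha_{\tau_k}$. Hence the proof is essentially a one-line computation combining two entries of the list \eqref{qlplusone}.
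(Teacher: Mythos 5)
Your proof is correct and is essentially identical to the paper's own argument: both start from the identity $|c_ke_k| = q_{\ell}(\alpha_{\tau_k})/(c(\alpha_{\tau_k})-2b)$ in \eqref{qlplusone}, expand $c(\alpha_{\tau_k}) = q_{\ell+1}(\alpha_{\tau_k})+p_{\ell}(\alpha_{\tau_k})$, and then apply the fourth relation of \eqref{qlplusone} to rewrite $q_{\ell+1}(\alpha_{\tau_k})/q_{\ell}(\alpha_{\tau_k})$. No issues.
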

\begin{proof} By \eqref{qlplusone} we obtain
	\begin{eqnarray*}
		\frac{1}{|c_ke_k|} = \frac{q_{\ell+1}(\alpha_{\tau_k})+p_{\ell}(\alpha_{\tau_k})-2b}{q_{\ell}(\alpha_{\tau_k})} 
	= a_k + \frac{p_{\ell}(\alpha_{\sigma_k})}{q_{\ell}(\alpha_{\sigma_k})} + \frac{p_{\ell}(\alpha_{\tau_k})}{q_{\ell}(\alpha_{\tau_k})} - \frac{2b}{q_{\ell}(\alpha_{\tau_k})} \cdot
	\end{eqnarray*}
\end{proof}

\begin{cor}\label{cor:sizeA} For any $k=1, 2,\ldots, \ell $ we have 
	\begin{equation*} 
	a_k <\frac{1}{|c_ke_k|} < a_k +2 \, . 
	\end{equation*}
\end{cor}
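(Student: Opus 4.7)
The plan is to combine the identity in the preceding lemma with the relations $q_\ell(\alpha_{\sigma_k}) = q_\ell(\alpha_{\tau_k})$ and $p_\ell(\alpha_{\sigma_k}) = q_{\ell-1}(\alpha_{\tau_k})$ from \eqref{qlplusone} to rewrite \eqref{ckekformula} in the form
\[ \frac{1}{|c_k e_k|} \;=\; a_k + \frac{q_{\ell-1}(\alpha_{\tau_k}) + p_\ell(\alpha_{\tau_k}) - 2b}{q_\ell(\alpha_{\tau_k})}. \]
The claimed bound $a_k < 1/|c_ke_k| < a_k+2$ then reduces to the equivalent integer-level inequality
\[ 0 \;<\; q_{\ell-1}(\alpha_{\tau_k}) + p_\ell(\alpha_{\tau_k}) - 2b \;<\; 2\, q_\ell(\alpha_{\tau_k}), \]
which I would handle directly using standard size estimates for continued fraction convergents.

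The key preliminary observation is that $|b|<1$ in all cases, which is immediate from the closed form \eqref{abc} together with the trivial estimate $\sqrt{c^2 + 4(-1)^{\ell-1}} + c > 2$; moreover, $b>0$ when $\ell$ is even and $b<0$ when $\ell$ is odd. For the lower bound I would split by parity: when $\ell$ is odd, $-2b = 2|b|$ is already positive while $q_{\ell-1}$ and $p_\ell$ are non-negative; when $\ell$ is even, one has $b\in(0,1)$ together with $q_{\ell-1}(\alpha_{\tau_k}),\,p_\ell(\alpha_{\tau_k}) \geq 1$ (for $\ell\geq 2$), giving $q_{\ell-1}+p_\ell-2b \geq 2(1-b)>0$.

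The upper bound requires a brief case split in $\ell$. For $\ell=1$ the left-hand side collapses to $-2b$ and the inequality $-2b<2$ is precisely $|b|<1$. For $\ell=2$ (necessarily even) it reads $2(1-b) < 2\,q_2(\alpha_{\tau_k})$, which is immediate from $b>0$ and $q_2\geq 1$. For $\ell\geq 3$, the recursion $q_\ell = a_{\ell-1}q_{\ell-1}+q_{\ell-2}$ with $q_{\ell-2}\geq 1$ yields $q_{\ell-1}\leq q_\ell-1$, and the fact that $\alpha_{\tau_k}\in(0,1)$ together with $\ell\geq 3$ forces $p_\ell/q_\ell<1$, hence $p_\ell\leq q_\ell-1$; combining these with $|b|<1$ gives $q_{\ell-1}+p_\ell - 2b \leq 2q_\ell - 2 + 2|b| < 2q_\ell$. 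The only even mildly delicate point is that the bound is sharp at the boundary cases $\ell\in\{1,2\}$, which is why the small-$\ell$ analysis must be carried out separately; beyond this the proof uses nothing more than \eqref{qlplusone} and $|b|<1$, and so no essential obstacle is expected.
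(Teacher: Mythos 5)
Your proof is correct and follows the same route as the paper: the paper states Corollary \ref{cor:sizeA} without proof as an immediate consequence of the identity \eqref{ckekformula}, and your argument is exactly the natural fleshing-out of that step (clearing the denominator via \eqref{qlplusone}, using $|b|<1$ with the sign of $b$ determined by the parity of $\ell$, and the elementary bounds $q_{\ell-1},p_\ell\le q_\ell-1$). The small-$\ell$ case analysis you include is careful and sound, so nothing further is needed.
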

By Corollary \ref{cor:sizeA}, it immediately follows that 
\begin{equation}\label{eq:sizeA}
    2a_k < A < 2(a_k+2) .
\end{equation}

The second result concerns the perturbations $\delta_t$. We state it without proof, as it is an easy consequence of \cite[Corollary 3]{pinner}.
\begin{lem}\label{lem:discr}
Let $\alpha=[0;a_1, a_2, \ldots]$ be an irrational with bounded continued fraction coefficients. Then for any fixed $n \in \mathbb{N}$, we have 
\begin{equation*}
    \Big| \sum_{t=n+1}^{n+N}\!\Big(\frac{1}{2} - \left\{ t\alpha\right\}\Big)\Big| \leq \log N \left( \frac{a}{8\log a} + 6\right)+ \frac{a}{8}+\frac{23}{4},
\end{equation*}
for all $N\geq 1$, where $a = \max_j a_j$.
\end{lem}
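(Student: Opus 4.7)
My plan is to reduce the sum to an inhomogeneous Kronecker sum and then apply Pinner's Corollary 3.

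First, I would perform a translation. Writing $t=n+s$ for $s=1,\ldots,N$ and setting $\gamma=\{n\alpha\}\in[0,1)$, we have $\{t\alpha\}=\{s\alpha+\gamma\}$, so
\[
\sum_{t=n+1}^{n+N}\Big(\tfrac12-\{t\alpha\}\Big) \;=\; -\sum_{s=1}^{N}\psi(s\alpha+\gamma),
\]
where $\psi(x)=\{x\}-\tfrac12$ is the sawtooth function. This removes the dependence on the starting index $n$, replacing it with a phase $\gamma$ which Pinner's estimate treats uniformly. Hence the problem reduces to bounding an inhomogeneous Kronecker sum of length $N$.

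Next, I would invoke Pinner's Corollary 3, which gives an explicit upper bound on $|\sum_{s=1}^{N}\psi(s\alpha+\gamma)|$ valid uniformly in $\gamma\in[0,1)$, expressed in terms of the continued fraction partial quotients of $\alpha$. The underlying argument uses the Ostrowski representation $N=\sum_{j}b_j q_j$ (with $0\leq b_j\leq a_{j+1}$), estimates the contribution of each block of length $q_j$ via the three-distance theorem, and handles the inhomogeneous shift by a separate estimate that is absorbed into an additive constant.

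Finally, I would specialize to $a_j\leq a$. Substituting $\max_j a_j\leq a$ into Pinner's expressions caps each Ostrowski digit $b_j$ at $a$ and reduces every partial-quotient sum in his bound to something linear in $a$ times the number of Ostrowski digits of $N$. Carrying out the resulting arithmetic, one obtains the stated leading term $(\tfrac{a}{8\log a}+6)\log N$ together with the additive constant $\tfrac{a}{8}+\tfrac{23}{4}$. The only real obstacle is the numerical bookkeeping needed to extract the exact constants $\tfrac{1}{8}$, $6$, and $\tfrac{23}{4}$ from Pinner's formulas; no idea beyond the initial translation is required, and indeed the paper itself announces this lemma as an immediate consequence of the cited corollary.
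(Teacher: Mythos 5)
Your approach is exactly the paper's: the paper states this lemma without proof, declaring it an easy consequence of Pinner's Corollary 3, which is precisely the reduction you describe (shift the index to turn the sum into an inhomogeneous Kronecker sum $\sum_{s=1}^{N}\psi(s\alpha+\gamma)$, apply Pinner's uniform bound, then specialize to $\max_j a_j = a$). The numerical bookkeeping you defer is likewise left implicit in the paper, so there is nothing substantive to compare beyond noting agreement.
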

Recall from \eqref{eq:Aanddelta} that $\delta_t = 1-2\{t\alpha_{\sigma_k}\}$, and thus by Lemma \ref{lem:discr} it follows that 
\begin{equation}\label{eq:sumdeltas}
    \left| \sum_{t=n+1}^{n+N} \delta_t \right| \leq \min \left\{ N, \log N \left( \frac{a_k}{4\log a_k} + 12\right)+ \frac{a_k}{4}+\frac{23}{2} \right\},
\end{equation}
where $a_k = \max_j a_j$ for the quadratic irrational $\alpha$. Note that 
\begin{equation*}
    \log N \left( \frac{a_k}{4\log a_k} + 12\right)+ \frac{a_k}{4}+\frac{23}{2} \geq 12 \log N + \frac{23}{2} > N \quad \text{ for all } \quad N \leq 60, 
\end{equation*}
regardless of the value of $a_k$. Accordingly, we will use the bound $\sum_{n+1}^{n+N} \delta_t \leq N$ whenever $N \leq 60$.

We are now equipped to prove Theorem \ref{thm:Gbounds}.
\begin{proof}[Proof of Theorem \ref{thm:Gbounds}]
Since $G$ in \eqref{gdef} is an even function, it suffices to consider $x\geq0$. Let $m=m(x)\geq 0$ be the non-negative integer satisfying
\begin{equation*}
 | x -Am |\, = \, \min \left\{ |x-An| : n=0,1,2,\ldots  \right\},  
\end{equation*}
and let us first assume that $m\geq 1$, meaning that $x\geq A/2$. Excluding the case of the golden ratio, we may safely assume that $a_k = \max_j a_j \geq 2$, and thus by \eqref{eq:sizeA} we have $A\geq 4$. It follows that 
\begin{equation}\label{minimum}
|x- Am| \leq \frac{A}{2}, \qquad |x-Am-\delta_m|\leq \frac{A}{2} + 1 \leq \frac{3A}{4},
\end{equation}
and 
\begin{equation*} 
|x-At| \geq \frac{A}{2} \quad \text{ and } \quad |x-At-\delta_t| \geq   \frac{A}{2} -1 \geq  \frac{A}{4} ,
\end{equation*}
for any $t\neq m$.

The function $G$ may be split into three products
$G(x)=\Pi_1(x)\Pi_2(x)\Pi_3(x),$ where 
	\begin{eqnarray*}
		\Pi_1(x) &=& 1 - \frac{x^2}{(Am+\delta_m)^2},\\
		\Pi_2(x) &=& \mathop{\prod_{t\geq 1}}_{t\neq m}\left(  1- \frac{x^2}{(At+\delta_t)^2}\right) \left(1- \frac{x^2}{(At)^2} \right)^{-1},\\
		\Pi_3(x) &=& \mathop{\prod_{t\geq 1}}_{t\neq m}\left( 1 - \frac{x^2}{(At)^2}\right).
	\end{eqnarray*}
For the first product, we observe that 
\begin{equation*} 
\Pi_1(x) = 1 - \frac{x^2}{(Am+\delta_m)^2} = \frac{(Am+\delta_m-x)(Am+\delta_m+x)}{(Am+\delta_m)^2},
\end{equation*}
and thus by \eqref{minimum} we get
\begin{equation} \label{pi1bound}
	\text{dist}(x,\mathcal{U})\frac{Am+\delta_m+x}{(Am+\delta_m)^2} \leq |\Pi_1(x)| \leq \frac{3A}{4}\frac{Am + \delta_m +x}{(Am+\delta_m)^2} \, \cdot
\end{equation}
We then consider the second factor $\Pi_2(x)= \prod\limits_{t\neq m}Q_t(x)$, where
$$  Q_t(x) = \left(  1- \frac{x^2}{(At+\delta_t)^2}\right) \left(1- \frac{x^2}{(At)^2} \right)^{-1}, \quad t\geq 1 .  $$
We have 
\begin{eqnarray*}
		Q_t(x) &=& \frac{1- \dfrac{x}{At+\delta_t} }{1 - \dfrac{x}{At} } \cdot \frac{1+ \dfrac{x}{At+\delta_t}}{1+ \dfrac{x}{At }}\,\, = \,\, \frac{1+ \dfrac{\delta_t}{At-x}}{1+ \dfrac{\delta_t}{At}} \cdot \frac{1+\dfrac{\delta_t}{At+x}}{1+ \dfrac{\delta_t}{At}} \\[1ex]
		&=& \exp\left\{\log\left( 1+ \dfrac{\delta_t}{At-x} \right) +\log\left(  1+\dfrac{\delta_t}{At+x} \right) -2\log\left(1+ \frac{\delta_t}{At}\right)  \right\} .
	\end{eqnarray*}
Thus if we employ the inequality
\begin{equation*}  
x - x^2  \, < \, \log (1+x)  \,< \,x  \qquad \text{ for all }  x > -\frac12  
\end{equation*}
we obtain
\begin{eqnarray} \label{44}
	\quad \exp\left\{\delta_t s_t - \delta_t^2 \left( \frac{1}{(At-x)^2}+\frac{1}{(At+x)^2}\right)  \right\} < Q_t(x)  < \exp\left\{  \delta_t s_t + \frac{2\delta_t^2}{ (At)^2}    \right\}    , 	
\end{eqnarray}
where we define
\begin{equation}\label{eq:defsn}
	s_t = \frac{1}{At-x} + \frac{1}{At+x} -\frac{2}{At} \cdot
\end{equation}
The factors in \eqref{44} contributing significantly to $\Pi_2(x)= \prod_{t \neq m} Q_t$ are the first-order terms $\delta_t s_t$. For the second-order terms, we observe that the contribution on the left hand side is larger (in absolute value) than that on the right hand side, and a straightforward calculation verifies that 
	$$ \sum_{t\neq m} \left( \frac{1}{(At+x)^2} + \frac{1}{(At-x)^2} \right) < \frac{8}{A^2}.$$
We thus conclude that 
\begin{equation}\label{eq:Pi2max}
	\Pi_2(x) = \prod_{t\neq m} Q_t(x) = \exp \left( \sum_{t \neq m} \delta_t s_t + E \right),
\end{equation}
where $s_t$ is given in \eqref{eq:defsn} and $|E| < \dfrac{8}{A^2} $. 

It remains to find an appropriate bound for 
\begin{equation*}
  \left|\sum_{t\neq m} \delta_ts_t \right| \leq \left| \sum_{t<m} \delta_t s_t \right| + \left| \sum_{t>m} \delta_t s_t \right|.
\end{equation*}
We first consider the final term on the right hand side above. Summation by parts yields
\begin{equation*}
    \sum_{t=m+1}^{m+M} \delta_t s_t = s_{m+M} \sum_{t=m+1}^{m+M} \delta_t + \sum_{t=m+1}^{m+M-1} (s_{t}-s_{t+1}) \sum_{k=m+1}^t  \delta_k,
\end{equation*}
for any $M\geq 1$. We observe that 
\begin{equation*}
    |s_{t+1}-s_t| \leq \frac{1}{At-x}-\frac{1}{A(t+1)-x} \leq \frac{1}{A(t-m-\frac{1}{2})(t-m+ \frac{1}{2})},
\end{equation*}
and \begin{equation*}
    |s_{m+M}| \leq \frac{1}{A(m+M)-x} \leq \frac{1}{A(M-\frac12)} \cdot 
\end{equation*}
Combining this with \eqref{eq:sumdeltas}, we find that 
\begin{equation*}
    \left| \sum_{t=m+1}^{m+M} \delta_t s_t \right| \leq \varepsilon(M) + \sum_{t=m+1}^{m+M-1} \frac{\min \{ t-m, \, K\log (t-m)+C\}}{A\left((t-m)^2-\frac14 \right)},
\end{equation*}
where $K=12+a_k/(4\log a_k)$, $C=23/2 + a_k/4$, and where $\varepsilon(M) \to 0$ as $M \to \infty$. Letting $M \to \infty$, we thus find 
\begin{eqnarray*}
 \left| \sum_{t>m} \delta_t s_t \right| &\leq& \frac{1}{A} \left( \sum_{t=1}^{60} \frac{t}{(t^2-\frac14 )} + K \sum_{t=61}^{\infty} \frac{\log t}{(t^2-\frac14)} + C \sum_{t=61}^{\infty} \frac{1}{(t^2-\frac14)}  \right) \\
 &\leq& \frac{1}{A} \left( 5.1 + 0.1K + 0.02 C \right),
\end{eqnarray*}
and inserting values of $C$ and $K$, and recalling that $A>2a_k$, we get
\begin{equation*}
    \left| \sum_{t>m} \delta_t s_t \right| \leq \frac{3.3}{a_k}+\frac{1}{80 \log a_k} + \frac{1}{400} \cdot
\end{equation*}
By an analogous argument, one can show that 
\begin{eqnarray*}
     \left| \sum_{t<m} \delta_t s_t \right| &\leq& \frac{10.4}{a_k}+ \frac{3}{80 \log a_k} + \frac{3}{400},
\end{eqnarray*}
and thus combined we have
\begin{equation*}
    \left| \sum_{t \neq m} \delta_ts_t \right| \leq \frac{13.7}{a_k} + \frac{1}{20\log a_k}+ \frac{1}{100} \cdot
\end{equation*}
Inserting this in \eqref{eq:Pi2max}, we arrive at 
\begin{equation}\label{eq:Pi2finalbound}
e^{-f(a_k)} \leq \left| \Pi_2(x) \right| \leq e^{f(a_k)},
\end{equation}
with $f$ defined as in \eqref{eq:fa_k}.

Finally, we observe that
$$ \Pi_3(x) = \frac{A^2m^2}{(Am-x)(Am+x)} \cdot \frac{\sin(\pi xA^{-1})}{\pi x A^{-1}} $$ 
and since $ |\pi xA^{-1} - \pi m| \leq \dfrac{\pi}{2}$ we get
$$ \frac{2}{A} \, \leq \, \left| \frac{\sin(\pi xA^{-1}) }{Am-x }\right|  \, \leq \, \frac{\pi}{A} \, \cdot   $$ 
This implies that 
\begin{equation} \label{pi3bounds}
	\frac{2A^2m^2}{\pi(Am+x)x}\, \leq\, |\Pi_3(x) |\, \leq \, \frac{A^2m^2}{(Am+x)x} \, .
\end{equation}

Combining the bounds \eqref{pi1bound}, \eqref{eq:Pi2finalbound} and \eqref{pi3bounds}, we find that 
\[ |G(x)| \geq \frac{2}{\pi} e^{-f(a_k)} \cdot \mathrm{dist}(x,\mathcal{U}) \cdot \frac{1}{|x|} \cdot \frac{(Am)^2}{(Am+\delta_m)^2} \cdot \frac{Am+ \delta_m + x}{Am + x},\]
and
	\[|G(x)| \leq \frac{3A }{4}e^{f(a_k)} \cdot \frac{1}{|x|} \cdot \frac{(Am)^2}{(Am+\delta_m)^2} \cdot \frac{Am+\delta_m + x}{Am + x}\, \cdot \]
The common factor 
\[\frac{(Am)^2}{(Am+\delta_m)^2} \cdot \frac{Am+\delta_m + x}{Am + x} = \left( 1-\frac{\delta_m}{Am+\delta_m}\right)^2 \left(1+\frac{\delta_m}{Am+x} \right)\]
will necessarily tend to $1$ as $m(x) \to \infty$. Only the rate of convergence from below will be important to us. We therefore apply the rough upper bound 
\[\frac{(Am)^2}{(Am+\delta_m)^2} \cdot \frac{Am+\delta_m + x}{Am + x}  \leq \left( \frac{4}{3}\right)^2 \left( \frac{7}{6}\right),\]
and the more precise lower bound 
\[\frac{(Am)^2}{(Am+\delta_m)^2} \cdot \frac{Am+\delta_m + x}{Am + x} \geq \left( 1-\frac{1}{Am}\right)^2 \left( 1-\frac{2}{3Am} \right) .\]
Inserting these bounds in the inequalities for $|G(x)|$ completes the proof of Theorem \ref{thm:Gbounds} in the case $|x|\geq A/2$.

Finally, we consider the case $0\leq x <A/2$. As an upper bound, we use
\[|G(x)| \leq 1.\]
For the lower bound, we again split $G$ into the subproducts $\Pi_1$, $\Pi_2$ and $\Pi_3$. Note that in this case, the product $\Pi_1$ is empty, and the product $\Pi_3$ is simply a sinc function bounded by 
	\[\frac{2}{\pi} \leq \Pi_3(x) \leq 1 .\]
For the product $\Pi_2$, we may use the bound \eqref{eq:Pi2finalbound} established for the case $x\geq A/2$ (in fact we can do better, as will be argued below). Combined we get
\begin{equation*}
\frac{2}{\pi} e^{-f(a_k)} \leq |G(x)| \leq 1,
\end{equation*}
and this completes the proof of Theorem \ref{thm:Gbounds}.
\end{proof}
\begin{proof}[Proof of Corollary \ref{cor:Gbound_m=1}]
Retracing the proof of Theorem \ref{thm:Gbounds}, we arrive at \eqref{eq:Pi2max}, and note that for $m(x)=1$ we have
\begin{equation*}
    \Pi_2(x) = \prod_{t \neq m} Q_t(x) = \exp \left( \sum_{t>m} \delta_ts_t + \frac{8}{A^2}\right) \leq \exp \left( \frac{3.3}{a_k} + \frac{1}{80 \log a_k}+ \frac{1}{400} + \frac{2}{a_k^2} \right),
\end{equation*}
since the product $\sum_{t<m} \delta_ts_t$ is empty. Apart from this, the proof remains unchanged.
\end{proof}
\begin{rem}\label{rem:Gsmallx}
Note that this bound on $\Pi_2(x)$ is clearly also valid for $x\leq A/2$. Thus, the lower bound on $G(x)$ in \eqref{eq:Gsmallx} may be improved to 
\begin{equation*}
    \frac{2}{\pi} e^{-g(a_k)} \leq |G(x)| \leq 1, \quad x\leq A/2,
\end{equation*}
with $g$ given in \eqref{eq:ga_k}.
\end{rem}

\begin{rem}
The lower bound for $G(x)$ in \eqref{Grate} is used in the following sections to determine an upper bound for the constants $C_k.$ In turn, this upper bound gives a threshold value $K> 1$ such that  $a_k = \max_{1\leq i \leq \ell}a_i \geq K$ implies that $C_k<1.$ \par  We believe that an improvement for the threshold value $K=23$ in Theorem \ref{thm:ak23} might be obtained as follows: in the estimates for $\Pi_2(x),$ we have bounded $\Big| \sum\limits_{t\neq m(x)}\delta_t s_t(x)\Big|$ from above uniformly for all $x>0$. In the proof of Theorem \ref{thm:ak23} we are actually interested in the quantity $G(1)G(3)\cdots G(2c-1),$ which means that we need a bound for the product $\prod_{x=1}^c \Pi_2(2x-1)$. Rather than using a uniform bound for all terms in this product, we could seek an upper bound for the double sum \[  \Big|\sum_{1\leq x\leq c}\hspace{-3mm} \sum_{\substack{t\geq 1 \\ t\neq m(2x-1)}}\hspace{-4mm} \delta_t s_t(2x-1)\Big|. \] 
This would take the specific range of $x$-values into account, and possibly provide a substantial improvement in the lower bound on $\prod_{x=1}^c \Pi_2(2x+1)$. We leave this task to the interested reader.
\end{rem}

\section{Upper bounds for the constants $C_k$\label{sec:sizeCk}}
With Theorem \ref{thm:Gbounds} established, let us now revisit Corollary \ref{cor:C_k} and carefully analyse the expressions for $C_k$ provided in \eqref{cvalueseven} and \eqref{cvaluesodd}. Recall that we have to differentiate between the case of even and odd period length $\ell$. To ease the analysis, it will be useful to make an assumption on the size of $\max_j a_j$ given a quadratic irrational $\alpha=[0; \overline{a_1, \ldots , a_{\ell}}]$. Presuming a priori that we cannot do better for general $\ell$ than for the $\ell=1$ case (see \cite[Theorem 6]{ATZ20}), we assume throughout this section that $\max_j a_j \geq 6$. 

We will show the following. 
\begin{thm}\label{thm:boundCk}
Let $\alpha =[0;\overline{a_1, \ldots, a_\ell}]$ for some even period length $\ell$, and assume $a_k = \max_j a_j \geq 6$. Then the limit $C_k = \lim_{m \to \infty} P_{\ell m+k}(\alpha)$ obeys the bound
\begin{equation*} 
C_{k}^{\frac{c-2}{c}} \leq \frac{\pi}{2a_k} e^{1+f(a_k)} \left(200e^{2.4}c^2\right)^{\frac{1}{c}} \cdot 2^{\frac{1}{q_{\ell}}} \cdot \left(\frac{a_k^{\frac52}}{e} \right)^{\frac{1}{a_k}}.
\end{equation*}
where $q_\ell = q_{\ell}(\alpha_{\sigma_k}),$ $c$ is defined in \eqref{abc} and we recall from \eqref{eq:fa_k} that 
\begin{equation*}
    f(a_k) \leq \frac{13.7}{a_k}+0.1, \quad a_k\geq 6.
\end{equation*}
\end{thm}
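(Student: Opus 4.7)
The idea is to translate the formula for $C_k$ from Corollary \ref{cor:C_k} into an expression involving the values $G(1)$, $G(1+2a^2)$, and $G(1+2s)$ for $s=1,\ldots,c-1$, and then to apply the two-sided bounds of Theorem \ref{thm:Gbounds} (together with Remark \ref{rem:Gsmallx}) to each factor separately. Rewriting \eqref{cvalueseven} in the form
\begin{equation*}
C_k \,=\, \left(\frac{1+a^2}{c!}\right)^{\!\frac{1}{c-2}} G(1)\, |G(1+2a^2)|^{\frac{1}{c-2}} \prod_{s=1}^{c-1} |G(1+2s)|^{-\frac{1}{c-2}},
\end{equation*}
and raising to the power $(c-2)/c$, the exponent on each $G$-factor becomes $1/c$ (with $G(1)$ carrying the slightly different exponent $(c-2)/c$), and the exponent on the prefactor $(1+a^2)/c!$ becomes $1/c$ as well. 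It therefore suffices to bound each of the four types of terms and combine.

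\textbf{Bounding the $G$-factors.} Since $a_k\geq 6$, Corollary \ref{cor:sizeA} gives $A>12$, so $1<A/2$ and \eqref{eq:Gsmallx} yields $G(1)\leq 1$. For the numerator term $|G(1+2a^2)|^{1/c}$ one verifies that $1+2a^2>A/2$ (using $c\geq 2$ and $a=(c+\sqrt{c^2-4})/2$) and then invokes the upper bound in \eqref{Grate} to obtain $|G(1+2a^2)|\leq (14A/9)e^{f(a_k)}/(1+2a^2)$. For the denominator, partition the range $\{1,\ldots,c-1\}$ into the small-$s$ part (where $1+2s<A/2$) and the large-$s$ part (where $1+2s\geq A/2$): on the first set apply the improved lower bound of Remark \ref{rem:Gsmallx}, and on the second apply the full lower bound in \eqref{Grate}, producing factors of the form $\mathrm{dist}(1+2s,\cu)/(1+2s)$. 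Since the $c$ total factors each contribute $e^{\pm f(a_k)/c}$, these combine into the single $e^{f(a_k)}$ of the target. The accumulated $1/|x|$ terms yield an expression of the form $A/\prod_{s=1}^{c-1}(1+2s)$, which pairs with the Stirling bound $c!\geq \sqrt{2\pi c}(c/e)^c$ applied to $((1+a^2)/c!)^{1/c}$, together with the inequality $a\leq c$ for even $\ell$ (from \eqref{abc}), to produce both the leading $\pi/(2a_k)$ factor and the $e^{1}$ in the exponent. The remaining numerical slack is absorbed into the constants $200$ and $e^{2.4}$ inside $(200e^{2.4}c^2)^{1/c}$ and the power $(a_k^{5/2}/e)^{1/a_k}$.

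\textbf{Main obstacle.} The delicate step is controlling the product $\prod_{s}\mathrm{dist}(1+2s,\cu)$ that appears in the large-$s$ part of the denominator bound. Since $\cu=\{At+\delta_t:t\in\mathbb{N}\}$ is a low-discrepancy perturbation (by Lemma \ref{lem:discr}) of the arithmetic progression $\{At:t\geq 1\}$, and the test points $1,3,\ldots,2c-1$ are equally spaced with gap $2<A$, at most one of them can lie very near any given $At+\delta_t$. Quantifying how near is the heart of the matter: one has to argue, via the Diophantine properties of $\alpha_{\sigma_k}$ (with $q_\ell=q_\ell(\alpha_{\sigma_k})$), that the product of these distances can lose at most a factor of $1/q_\ell$, which becomes the $2^{1/q_\ell}$ factor after taking the $c$-th root. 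Once this estimate is in place, the remaining manipulations—applying Stirling, tracking the $e^{f(a_k)}$ contributions, and collecting lower-order terms into $(a_k^{5/2}/e)^{1/a_k}$—are routine bookkeeping.
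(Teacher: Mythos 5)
Your decomposition is exactly the one the paper uses: rewrite \eqref{cvalueseven} as in \eqref{CkBound1}, bound $G(1)\leq 1$ and $(1+a^2)G(1+2a^2)$ via the upper bound in \eqref{Grate}, split the denominator product $\prod_{s=0}^{c-1}G(2s+1)$ according to whether $2s+1$ is below or above $A/2$, and reduce everything to a lower bound on the product of distances $\mathrm{dist}(2s+1,\cu)$. Up to that point the proposal is sound (modulo the omitted but minor factor $\prod_s(1-2/(3Am_s))(1-1/(Am_s))^2$ from the lower bound in \eqref{Grate}, which the paper controls in \eqref{eq:Pi1bd}).

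The genuine gap is in the one step you yourself flag as the heart of the matter, and the estimate you assert there is not the right one. The paper's Lemmas \ref{lem:pi2one} and \ref{lem:pi2two} show that
\begin{equation*}
\prod_{s=\lfloor a_k/2\rfloor+1}^{c-1}\mathrm{dist}(2s+1,\cu)\;\geq\;\frac{2}{5c}\cdot 2^{c-\lfloor a_k/2\rfloor}\cdot\Big\lfloor\frac{a_k}{2}\Big\rfloor!\cdot\big[(a_k-1)!\big]^{q_\ell-1}\cdot\prod_{t=1}^{q_\ell-1}\|R_t\|,
\qquad
\prod_{t=1}^{q_\ell-1}\|R_t\|\geq\frac{\sqrt{q_\ell}}{2(2e)^{q_\ell+1}},
\end{equation*}
where the $R_t$ are the specific Diophantine quantities in \eqref{Rtdefinition}. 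Two things differ from your sketch. First, the structure: each point $u_k(t)$ with $1\leq t\leq q_\ell$ attracts a block of roughly $a_k$ test points $2s+1$, the nearest of which contributes $2\|R_t\|$ and the rest of which contribute the factorial $(a_k-1)!$; your claim that ``at most one test point lies near any $At+\delta_t$'' does not by itself produce these factorial gains, and without them the Stirling comparison with $c!$ in the numerator fails. Second, the loss in the minimal-distance product is \emph{exponential} in $q_\ell$, namely of order $(2e)^{-q_\ell}$ (obtained by showing, via the bijection $t\mapsto tp_\ell \bmod q_\ell$ and Lemma \ref{fractionalpartlowerbound}, that the $\|R_t\|$ essentially run over $j/q_\ell$, $j=1,\dots,\lfloor q_\ell/2\rfloor$, twice) --- not ``a factor of $1/q_\ell$''; and the final $2^{1/q_\ell}$ in the theorem arises as $(2^{a_k})^{1/c}$ with $c>a_kq_\ell$, not as a $c$-th root of $1/q_\ell$. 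Since this distance-product estimate is precisely what determines whether the final bound beats $1$ for moderate $a_k$, asserting it in an incorrect form without proof leaves the argument incomplete.
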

Throughout this section, when there is no danger of confusion, we shall write for abbreviation $q_\ell = \ql$ and $q_{\ell+1} = \qlplusone.$ 

For the special case $q_{\ell}=1$, we have an improved bound; note that this only occurs if $\ell=2$ and $\alpha=[0; \overline{a_1, a_2}]$ with either $a_1=1$ or $a_2=1$.
\begin{cor}\label{cor:boundCkq=1}
Let $\alpha=[0;\overline{a_1 , a_2}]$, and assume $a_k = \max \{a_1, a_2\} \geq 6$ and $\min \{a_1, a_2\}=1$. Then the limit $C_k = \lim_{m \to \infty} P_{2m+k}(\alpha)$ obeys the bound
\begin{equation*} 
    C_k^{\frac{c-2}{c}} \leq \frac{\pi}{a_k}e^{1+g(a_k)} \left( 6.2(a_k+2)^4\right)^{\frac{1}{a_k+2}}, 
\end{equation*}
where we recall from \eqref{eq:ga_k} that 
\begin{equation*}
    g(a_k) \leq \frac{3.3}{a_k}+ 0.1 , \quad a_k \geq 6 .
\end{equation*}
\end{cor}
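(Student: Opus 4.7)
The plan is to retrace the argument used for Theorem \ref{thm:boundCk} and sharpen each step using the constraint $q_\ell(\alpha_{\sigma_k}) = 1$. Since $\min\{a_1, a_2\} = 1$ forces $\ell = 2$, the recurrences for the convergents give
\[
c = q_{\ell+1}(\alpha) + p_\ell(\alpha) = a_1 a_2 + 2 = a_k + 2,
\]
so both $c$ and the relevant denominators are of order $a_k$. Starting from \eqref{cvalueseven} one has
\[
C_k = \left(\frac{1+a^2}{c!}\right)^{\frac{1}{c-2}} G(1)\, |G(1+2a^2)|^{\frac{1}{c-2}} \prod_{s=1}^{c-1} |G(1+2s)|^{-\frac{1}{c-2}},
\]
with $G$ as in \eqref{gdef}. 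I would bound $G(1)\leq 1$ using \eqref{eq:Gsmallx}, apply the upper half of Theorem \ref{thm:Gbounds} to $|G(1+2a^2)|$ (where $1+2a^2 \asymp 2c^2$), and handle $\bigl((1+a^2)/c!\bigr)^{1/(c-2)}$ by Stirling's formula.

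The decisive step is the lower bound on $\prod_{s=1}^{c-1}|G(1+2s)|$. Corollary \ref{cor:sizeA} yields $2a_k < A < 2c$, so that $3 \leq 1+2s \leq 2c-1 \leq 3A/2$ for every $s = 1, \ldots, c-1$. Therefore each $1+2s$ either lies below $A/2$, in which case Remark \ref{rem:Gsmallx} gives $|G(1+2s)| \geq (2/\pi)e^{-g(a_k)}$, or lies in $[A/2, 3A/2]$, in which case $m(1+2s) = 1$ and Corollary \ref{cor:Gbound_m=1} applies. In both cases the sharper exponent $e^{-g(a_k)}$ replaces the coarser $e^{-f(a_k)}$ that would appear if the general Theorem \ref{thm:Gbounds} were used -- this is the principal source of the improvement from $f(a_k)$ to $g(a_k)$ in the corollary.

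The hard part will be controlling the distance product that enters through Corollary \ref{cor:Gbound_m=1}. Because $u_k(2) = 2A + \delta_2 > 3A/2$, the only element of $\cu$ in the interval $[A/2, 3A/2]$ is $u_k(1) = A + \delta_1$, and it is enough to bound from below
\[
\prod_{s\,:\,A/2\leq 1+2s\leq 3A/2}\bigl|(1+2s) - (A + \delta_1)\bigr|.
\]
Using the closed forms $\alpha_{\sigma_k} = (\sqrt{a_k^2+4a_k}-a_k)/2$ (so $\delta_1 = 1-2\alpha_{\sigma_k}$) and $A = 2(a_k+2-2b)$ with $b=(c-\sqrt{c^2-4})/2$, a direct computation shows that $A+\delta_1$ is within distance of order $1/a_k$ of the largest odd integer $2c-1$, while every other odd integer $1+2s$ in the range is separated from $A+\delta_1$ by an even integer, giving distances $2, 4, 6, \ldots$. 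A Stirling-type estimate applied to the resulting product together with the numerators $\prod_s(1+2s) \asymp (2c/e)^c$ gives rise to the polynomial factor $\bigl(6.2(a_k+2)^4\bigr)^{1/(a_k+2)}$ in the final bound.

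Collecting the estimates and raising to the $(c-2)/c$-th power produces
\[
C_k^{\frac{c-2}{c}} \leq \frac{\pi}{a_k}e^{1+g(a_k)}\bigl(6.2(a_k+2)^4\bigr)^{1/(a_k+2)};
\]
the prefactor $\pi/a_k$ (in place of $\pi/(2a_k)$ in Theorem \ref{thm:boundCk}) absorbs the factor $2 = 2^{1/q_\ell}$ that is stated separately in the general theorem.
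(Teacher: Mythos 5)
Your proposal is correct and follows essentially the same route as the paper: you specialize the even-period argument, use Corollary \ref{cor:Gbound_m=1} together with Remark \ref{rem:Gsmallx} to replace $f(a_k)$ by $g(a_k)$ (the source of the improvement), and exploit $q_\ell=1$ so that only $u_k(1)=2c-1-2b$ contributes to the distance product, which then collapses to essentially $b\cdot\lfloor a_k/2\rfloor!$ times powers of $2$ --- exactly what the paper gets by specializing Lemma \ref{lem:pi2one}. The only points to tighten in a full write-up are that the distances to $u_k(1)$ are $2j-2b$ rather than exactly $2,4,6,\ldots$ (the paper absorbs this into a factor $4/5$), and that the $(1-1/(Am_s))$-type factors (the product $\Pi_1$) still need to be accounted for.
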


For the odd period case, we will establish the following.
\begin{thm}\label{thm:boundCkodd}
Let $\alpha =[0;\overline{a_1, \ldots, a_\ell}]$ for some odd period length $\ell$, and assume $a_k = \max_j a_j \geq 6$. Then the limit $C_k = \lim_{m \to \infty} P_{\ell m+k}(\alpha)$ obeys the bound
\begin{equation*} 
C_{k} \leq \frac{\pi}{2a_k} e^{1+f(a_k)} \left(40c^{\frac32}\right)^{\frac{1}{c}} \cdot 2^{ \frac{1}{q_{\ell}}} \cdot a_k^{\frac{5}{2a_k}},
\end{equation*}
where $f$ is given in \eqref{eq:fa_k}.
\end{thm}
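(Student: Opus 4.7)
The plan is to mirror the approach used for the even-period case (Theorem \ref{thm:boundCk}), starting from the explicit formula for $C_k$ in equation \eqref{cvaluesodd} of Corollary \ref{cor:C_k}. Interchanging the order of products, this reads
\[
C_k \,=\, \frac{G(1)}{\prod_{s=1}^{c}|s-a|^{1/c}}\prod_{s=0}^{c-1}|G(1+2s-2a)|^{-1/c},
\]
where $G$ is the function from \eqref{gdef}. The factor $G(1)$ is bounded trivially: since $a_k\geq 6$ gives $1<A/2$, Remark \ref{rem:Gsmallx} (together with \eqref{eq:Gsmallx}) yields $G(1)\leq 1$. The substantive work lies in producing sharp lower bounds on each $|G(y_s)|$ with $y_s := 1+2s-2a$.

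Since $\ell$ is odd we have $c<a<c+1$, so $|y_s|=2a-1-2s$ decreases from roughly $2c-1$ (at $s=0$) down to a value in $(1,3)$ (at $s=c-1$). I would split $\{0,\ldots,c-1\}$ into a small-argument set $S=\{s:|y_s|<A/2\}$ and a large-argument set $L=\{s:|y_s|\geq A/2\}$. For $s\in S$, Remark \ref{rem:Gsmallx} supplies the uniform lower bound $|G(y_s)|\geq \tfrac{2}{\pi}e^{-g(a_k)}$. For $s\in L$, the rate bound \eqref{Grate} of Theorem \ref{thm:Gbounds} gives
\[
|G(y_s)|^{-1/c}\leq \Bigl(\frac{\pi}{2}\,e^{f(a_k)}\,\Bigl(1-\frac{2}{3Am_s}\Bigr)^{-1}\Bigl(1-\frac{1}{Am_s}\Bigr)^{-2}\,\frac{|y_s|}{\mathrm{dist}(y_s,\mathcal{U})}\Bigr)^{1/c},
\]
where $m_s$ is the integer closest to $|y_s|/A$.

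The crux is combining $\prod_{s=1}^c|s-a|^{-1/c}$ with $\prod_{s\in L}\bigl(|y_s|/\mathrm{dist}(y_s,\mathcal{U})\bigr)^{1/c}$. The factor $\prod_{s\in L}|y_s|^{1/c}$ should merge cleanly with $\prod_{s=1}^c(a-s)^{1/c}$ via the identity $|y_s|=2\bigl(a-s-\tfrac{1}{2}\bigr)$, producing essentially $2^{|L|/c}$ times a bounded correction of Pochhammer-ratio type. For the reciprocal distances $\mathrm{dist}(y_s,\mathcal{U})^{-1/c}$, I would exploit the explicit expression \eqref{ckekformula} for $A=2|c_ke_k|^{-1}$ together with a three-distance or pigeonhole argument applied to the Kronecker sequence $(\{t\alpha_{\sigma_k}\})$ to obtain $\mathrm{dist}(y_s,\mathcal{U})\geq \mathrm{const}/q_\ell$ in the worst case; this is the source of the factor $2^{1/q_\ell}$ in the target bound. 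The dominant factor $\pi/(2a_k)$ should emerge as $\pi/A$ from the single worst index $s\in L$, where Corollary \ref{cor:Gbound_m=1} applies with $m_s=1$ and consumes the entire $\pi/2$ without the $1/c$ power.

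The main obstacle I anticipate is the careful bookkeeping: counting $|S|$ and $|L|$, controlling the combined multiplicative corrections $\prod_{s\in L}\bigl(1-\tfrac{1}{Am_s}\bigr)^{-2}\bigl(1-\tfrac{2}{3Am_s}\bigr)^{-1}$, and arranging the constants so that the specific expression $(40c^{3/2})^{1/c}\cdot 2^{1/q_\ell}\cdot a_k^{5/(2a_k)}$ (and no worse) emerges. In particular, the few boundary indices $s\in L$ with small $m_s$ must be handled via Corollary \ref{cor:Gbound_m=1} rather than the generic Theorem \ref{thm:Gbounds} bound, and the fact that $y_s$ is a genuine perturbation of an odd integer (rather than exactly an odd integer, as in \eqref{cvalueseven}) introduces additional care when relating $y_s$ to elements of $\mathcal{U}$ and when exploiting the cancellation with $\prod_{s=1}^c|s-a|^{1/c}$.
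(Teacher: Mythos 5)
Your skeleton matches the paper's: start from \eqref{cvaluesodd}, bound $G(1)\leq 1$, bound $\prod_{s=1}^c|s-a|^{-1}$ by roughly $(e/c)^c$ times polynomial factors, split the arguments $y_s=1+2s-2a=-(2s'+1-2b)$ into those below and above $A/2$, and apply \eqref{eq:Gsmallx} and \eqref{Grate} respectively. But the heart of the matter is the lower bound on $\prod_{s}\mathrm{dist}(y_s,\mathcal{U})$, and your proposed treatment of it is quantitatively far too weak. Since $\prod_{s=1}^c|s-a|^{-1}\approx(e/c)^c$ and $\prod_s|y_s|\approx(2c/e)^c$, to extract $C_k\leq \pi e^{1+f(a_k)}/(2a_k)\cdot(1+o(1))$ from $C_k^c$ you need the \emph{geometric mean} of the distances $\mathrm{dist}(y_s,\mathcal{U})$ over $s$ to be of order $a_k/e$. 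A worst-case bound $\mathrm{dist}(y_s,\mathcal{U})\geq \mathrm{const}/q_\ell$ for every $s$, obtained by pigeonhole or three-distance considerations, gives a geometric mean of only $\mathrm{const}/q_\ell$ and hence an upper bound on $C_k$ of order $\pi e^{f}q_\ell/\mathrm{const}$, which never drops below $1$. The paper's proof instead observes that the $c-\lfloor a_k/2\rfloor-1$ relevant values of $s$ organize into $q_\ell$ clusters of roughly $a_k$ consecutive indices each (those sharing the same nearest $t_s=t$), and within each cluster only \emph{one} distance is small (equal to $\|R_t+b\|$) while the remaining ones are bounded below by $1,2,\ldots,a_k-1$; this yields the factorial lower bound $\Pi_2'\geq 2^{c-\lfloor a_k/2\rfloor-1}\lfloor a_k/2\rfloor!\,[(a_k-1)!]^{q_\ell-1}\prod_t\|R_t+b\|$ (Lemma \ref{lem:pi2odd2}), with $\prod_t\|R_t+b\|\gtrsim(2e)^{-q_\ell}$ handled separately (Lemma \ref{lem:pi2odd1}). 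Without this clustering/factorial structure the main term cannot emerge.

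A second, related error: you assert that the dominant factor $\pi/(2a_k)$ ``emerges from the single worst index $s\in L$ \ldots without the $1/c$ power.'' In the odd case $C_k^c$ equals the entire right-hand side of \eqref{cvaluesodd} raised to the $c$-th power, so \emph{every} factor acquires the exponent $1/c$ upon taking the $c$-th root; a single bad factor contributes only $(\cdot)^{1/c}\to 1$. The term $\pi/(2a_k)$ survives precisely because it appears (on geometric average) in each of the $c$ factors, as the computation $(e/c)\cdot(\pi e^{f}c/(2a_k))=\pi e^{1+f}/(2a_k)$ per factor shows. Likewise, the factor $2^{1/q_\ell}$ in the statement arises from $2^{a_k/c}\leq 2^{1/q_\ell}$ (the deficit $2^{-a_k}$ in the bound for $\Pi_2'$), not from a $1/q_\ell$-sized worst-case distance. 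These are not cosmetic issues: as written, your mechanism for producing the main term would not close.
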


\subsection{Bounding $C_k$ when $\ell \equiv 0 \pmod 2$}
Considering first even period lengths $\ell$, we recall from \eqref{cvalueseven} that
\begin{eqnarray} 
C_k^{c-2} &=& \frac{1+a^2}{c!}\left| \frac{G(1)^{c-1}G(1+2a^2) }{ G(1)G(3)G(5)\cdots G(2c-1)}\right| , \label{CkBound1}
\end{eqnarray}
with $a$ and $c$ as given in \eqref{abc}, and the function $G$ defined in \eqref{gdef}. The term $|G(1)|$ in \eqref{CkBound1} is bounded above by $1$, and the term $G(1+2a^2)$ can be bounded by the upper bound in Theorem \ref{thm:Gbounds}. Keeping the expression for $C_k^{c-2}$ in mind, we will rather give a bound for $(1+a^2)G(1+2a^2)$. By Theorem \ref{thm:Gbounds} we have
\begin{equation}\label{eq:boundGbig}
    (1+a^2)G(1+2a^2) \leq \frac{14A}{9}\cdot e^{f(a_k)} \cdot \frac{(1+a^2)}{(1+2a^2)} \leq \frac{4}{5}Ae^{f(a_k)},
\end{equation}
where for the last inequality we have used that $a\geq 5$ whenever $a_k \geq 6$.

Now let us find a lower bound on 
\begin{equation} \label{Gvalues}
G(1)G(3)\cdots G(2c-1) = \prod_{s=0}^{c-1}G(2s+1).
\end{equation}
In view of Theorem \ref{thm:Gbounds}, some of the factors of \eqref{Gvalues} will be bounded using \eqref{Grate} while others will be bounded using \eqref{eq:Gsmallx}. Since $2a_k <A < 2(a_k+2)$, the integers $j$ satisfying $j < \frac{A}{2}$ are $j=1,2,\ldots, a_k$ and possibly also $a_k+1$. Thus the factors of \eqref{Gvalues} with $0\leq s \leq  \left\lfloor \frac{a_k}{  2}\right\rfloor -1$ will be bounded using \eqref{eq:Gsmallx} and those with $s\geq \lfloor \frac{a_k}{2}\rfloor+1$  will be bounded using \eqref{Grate}. We get 
\begin{equation}\label{Gsmallarguments}
\prod_{s = 0}^{ \lfloor \frac{a_k}{2} \rfloor - 1} G(2s+1) \geq \left(\frac{2}{\pi} e^{-f(a_k)}\right)^{  \lfloor \frac{a_k}{2} \rfloor } 
\end{equation}
and 
\begin{equation}\label{Gbigarguments}
\prod_{s= \lfloor \frac{a_k}{2} \rfloor+1}^{c-1} G(2s+1) \geq  
\prod_{s= \lfloor  \frac{a_k}{2} \rfloor+1}^{c-1} \frac{2}{\pi}e^{-f(a_k)} \left( 1 - \frac{2}{ 3Am_{s}} \right) \left( 1 - \frac{1}{ A m_{s}}\right)^2 \cdot \frac{\mathrm{dist}(2s+1,\mathcal{U})}{2s+1},
\end{equation}
with $\mathcal{U}= (u_k(t))_{t=1}^{\infty}$ given in \eqref{ukt} and $m_s \in \mathbb{N}$ as defined in Theorem \ref{thm:Gbounds}.

The factor $G(a_k+1)$ appears in \eqref{Gvalues} only when $a_k$ is even and $s=a_k/2$. For this factor, it is not clear which of the two bounds \eqref{Grate} and \eqref{eq:Gsmallx} apply. However, under the restriction that $a_k\geq 6$, we clearly have $\mathrm{dist}(a_k+1, \mathcal{U})>1$, and by combining the bounds \eqref{Grate} and \eqref{eq:Gsmallx} (keeping all terms that are below $1$), we get the universal bound
\begin{equation*}
G(2 \lfloor \tfrac{a_k}{2} \rfloor+1) \geq \frac{2}{\pi} e^{-f(a_k)} \left( 1 - \frac{2}{3 A } \right) \left( 1 - \frac{1}{A}\right)^2  \frac{1}{  2\lfloor \tfrac{a_k}{2} \rfloor + 1 },
\end{equation*}
which holds regardless of the parity of $a_k$, and of whether $a_k+1<A/2$ or $a_k+1\geq A/2$. Combining this bound with \eqref{Gsmallarguments} and \eqref{Gbigarguments}, we finally get
\begin{equation}\label{eq:Gallvals}
\prod_{s=0}^{c-1} G(2s+1) \geq \left( \frac{2}{\pi} e^{-f(a_k)}\right)^c \cdot \frac{\Pi_1 \cdot \Pi_2}{\Pi_3}, 
\end{equation}
where 
\begin{equation} \label{firstproduct}
\Pi_1   =   \prod_{s=\lfloor \frac{a_k}{2} \rfloor}^{c-1} \left( 1 - \frac{2}{3 A m_{s}} \right) \left( 1 - \frac{1}{A m_{s}}\right)^2 , 
\end{equation}
\begin{equation} \label{product}
\Pi_2\quad  =  \prod_{s = {\lfloor \frac{a_k}{2} \rfloor}+1}^{c-1} \mathrm{dist}(2s+1,\mathcal{U})  \, ,
\end{equation}
and 
\begin{equation}\label{denomproduct}
\Pi_3 = \prod_{s= \lfloor \frac{a_k}{2} \rfloor}^{c-1} (2s+1) . 
\end{equation}

We proceed by bounding the three product terms $\Pi_1$, $\Pi_2$ and $\Pi_3$ separately. In the following, we will make use of the inequalities
\begin{gather}
 \sqrt{2\pi n }\, \left(\frac{ n}{e} \right)^n  \, \leq \, n ! \, \leq \,  e \sqrt{n}  \left(\frac{ n}{e} \right)^n  , \nonumber \\[1ex]
\sqrt{2\pi n }\, \left(\frac{2n}{e} \right)^n  \, \leq \, (2n)!! \, \leq \, e \sqrt{n}  \left(\frac{2n}{e} \right)^n   , \label{factorials}  \\[1ex]
\frac{\sqrt{4\pi}}{e}  \, \left( \frac{2n}{e}\right)^{n} \, \leq \,  (2n-1)!! \, \leq \, \frac{e}{ \sqrt{\pi}} \left(\frac{2n}{e} \right)^n    \nonumber
\end{gather}
which are valid for all $n \in \mathbb{N}$. 

Starting with the first and simplest of the three, we observe that 
\begin{equation*}
\Pi_1 \geq \prod_{s=\lfloor \frac{a_k}{2} \rfloor}^{c-1} \left( 1 - \frac{1}{ A m_{s}}\right)^3.
\end{equation*}
Recall that $m_s$ is the unique positive integer for which $|Am_s-(s+1)|$ is minimized. As $s$ runs through the values $\lfloor a_k/2\rfloor, \ldots , c-1$, the integer $m_s$ runs through the values $1, \ldots , q_\ell$, and each integer occurs at most $a_k+1$ times. It follows that 
\begin{equation*}
\Pi_1 \geq   \prod_{m=1}^{q_\ell} \left( 1-\frac{1}{Am} \right)^{3(a_k+1)}.
\end{equation*}
Using that 
\begin{equation*}
 x - x^2 \, < \, \log (1+x)  \,< \,x   \qquad \text{ for all }  x > - \frac12 
\end{equation*}
it is straightforward to show that
\begin{equation*}
\prod_{m=1}^{q_\ell} \left( 1-\frac{1}{Am} \right) \geq \exp \Big(\!\! -\frac{1}{A}\left(1+\log q_\ell\right) - \frac{\pi^2}{6A^2}\Big) \geq \exp \Big(\!\!-\frac{1}{A}(1.14 + \log q_\ell)\Big),
\end{equation*}
where for the final inequality we have used that $a_k\geq 6$, and thus $A>12$. Inserting this in the expression for $\Pi_1$ above, we get
\begin{equation}\label{eq:Pi1bd}
\Pi_1 \geq \left( \frac{1}{e^{1.14}q_\ell}\right)^{\frac{3(a_k+1)}{A}} \geq \left( \frac{1}{e^{1.14}q_\ell}\right)^{\frac{3(a_k+1) }{2a_k}} \geq \frac{1}{e^2q_\ell^2}.
\end{equation}

Let us now consider $\Pi_3$, for which we need to find an upper bound. From \eqref{factorials} it follows that 
\begin{equation*}
    \Pi_3 = \frac{(2c-1)!!}{\left(2\lfloor \frac{a_k }{2} \rfloor -1\right)!!} \leq \frac{e^2}{2\pi} \left( \frac{2c}{e}\right)^c \left( \frac{2 \lfloor \frac{a_k }{2} \rfloor}{e} \right)^{-\lfloor  \frac{a_k }{2}\rfloor} .
\end{equation*}
We observe that 
\begin{equation}\label{eq:factrick}
\left( \frac{2\lfloor \frac{a_k }{2} \rfloor}{e}\right)^{\lfloor \frac{a_k }{2} \rfloor} \geq \left( \frac{a_k-1}{e}\right)^{ \frac{a_k-1 }{2}} \geq \frac{1}{2} \left( \frac{e}{a_k} \right)^{\frac12} \left( \frac{a_k}{e}\right)^{ \frac{a_k }{2}},
\end{equation}
where in the last step we have used that $(1-1/n)^{\frac{n}{2}}   \geq \frac12$ for $n\geq 2$. Inserting this in the bound above we find that
\begin{equation}\label{eq:finalPi3bd}
\Pi_3 \leq \frac{e^{\frac32}\sqrt{a_k}}{\pi} \left(\frac{e}{a_k}\right)^{\frac{a_k }{2}} \left( \frac{2c}{e}\right)^{c} .
\end{equation}

The estimation of the product $\Pi_2$ is far more intricate. When we analyse the size of the factors comprising $\Pi_2,$ the numbers 
\begin{equation} \label{Rtdefinition}
R_t \, = \,  \left\{\frac{t\pltau}{\qltau} \right\} + t\left(\frac{\pl}{\ql}-\alpha_{\sigma_k} \right)- \frac{2bt}{\ql}, \quad   t=1,\ldots, q_{\ell}-1 
\end{equation}
appear naturally. The size of the product of $\|R_t\|$ and its relation with $\Pi_2$ is stated in the following  lemmas,  the proofs of which are postponed for Section \ref{sec:pi2lemmas}.
\begin{lem}\label{lem:pi2two}
When $\ell$ is even, the numbers $R_t, t=1,\ldots, q_\ell -1$ given in \eqref{Rtdefinition} satisfy 
\begin{equation} \label{eq:boundRt}
\prod_{t=1}^{q_\ell-1} \|R_t\| \geq \frac{\sqrt{q_\ell}}{2(2e)^{q_\ell+1}} \cdot
\end{equation}
\end{lem}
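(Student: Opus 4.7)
The plan is to rewrite $R_t$ in a transparent form and then exploit the smallness of $b$ forced by the standing assumption $a_k=\max_j a_j\geq 6$ of Section~\ref{sec:sizeCk}.

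First, since $q_0=0$, formula \eqref{ekck} gives $e_0(\alpha_{\sigma_k})=-1$, and the second identity of \eqref{ntherror} then yields $\Lambda_\ell(\alpha_{\sigma_k})=-b$, i.e.\ $p_\ell(\alpha_{\sigma_k})/q_\ell-\alpha_{\sigma_k}=b/q_\ell$. Substituting this together with $q_\ell(\alpha_{\tau_k})=q_\ell$ into \eqref{Rtdefinition} collapses the definition to
\[R_t=\frac{j_t-bt}{q_\ell}, \qquad j_t:=t\,p_\ell(\alpha_{\tau_k})\bmod q_\ell,\]
and $(j_t)_{t=1}^{q_\ell-1}$ is a permutation of $\{1,\ldots,q_\ell-1\}$ by coprimality of $p_\ell(\alpha_{\tau_k})$ and $q_\ell$. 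The crucial smallness estimate for $b$ comes from \eqref{qlplusone}, which gives $q_{\ell+1}(\alpha_{\tau_k})=a_kq_\ell+p_\ell(\alpha_{\sigma_k})$, whence $c\geq a_kq_\ell+2$; combined with the elementary estimate $b\leq 1/(c-1)$ (valid for $c\geq 2$ since $b=2/(c+\sqrt{c^2-4})\leq 1/(c-1)$ reduces to $c\geq 2$ after squaring) this produces
\[bt\;\leq\;bq_\ell\;\leq\;\frac{1}{a_k}\;\leq\;\frac{1}{6}\qquad (1\leq t\leq q_\ell-1).\]

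In particular $R_t\in(0,1)$, so $\|R_t\|=(1/q_\ell)\min(j_t-bt,\,q_\ell-j_t+bt)$. Reparametrising by $j$ (letting $t_j$ denote the unique $t$ with $j_t=j$) and splitting at $K:=\lfloor q_\ell/2\rfloor$, one bounds the $j\leq K$ contributions by $j-bt_j$ and the $j>K$ contributions by $q_\ell-j$, arriving at
\[\prod_{t=1}^{q_\ell-1}\|R_t\|\;\geq\;\frac{K!\,(q_\ell-1-K)!}{q_\ell^{q_\ell-1}}\prod_{j=1}^{K}\Bigl(1-\frac{bt_j}{j}\Bigr).\]
Using $\log(1-x)\geq -x/(1-x)$ together with the uniform bound $bt_j\leq 1/a_k\leq 1/6$, the perturbation product reduces to a harmonic sum $\sum_{j\leq K}1/j \leq \log q_\ell + O(1)$, yielding $\prod_{j=1}^K(1-bt_j/j)\geq C_1\,q_\ell^{-1/6}$ for an absolute constant $C_1>0$. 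Stirling's inequality $n!\geq\sqrt{2\pi n}(n/e)^n$ applied to the remaining factorials then gives $K!(q_\ell-1-K)!/q_\ell^{q_\ell-1}\geq C_2\,q_\ell/(2e)^{q_\ell}$ for another absolute constant $C_2>0$, so that
\[\prod_{t=1}^{q_\ell-1}\|R_t\|\;\geq\;\frac{C_3\,q_\ell^{5/6}}{(2e)^{q_\ell}},\]
which exceeds the claimed $\sqrt{q_\ell}/(2(2e)^{q_\ell+1})$ by a factor of order $q_\ell^{1/3}$ for all sufficiently large $q_\ell$; the finitely many small cases are verified by direct computation.

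The main obstacle lies in the bound $bq_\ell\leq 1/a_k$: without it the perturbation $bt_1$ could approach $1$, forcing $\|R_{t_1}\|$ down to roughly $1/q_\ell^2$ and derailing the factorial estimate. Once the uniform bound $bt\leq 1/6$ is secured, the remainder is routine Stirling bookkeeping, with the only subtlety being the $\log q_\ell$ contribution from the harmonic sum, which enters the final constant through the $q_\ell^{-1/6}$ factor and still leaves a healthy $q_\ell^{1/3}$ margin over the target.
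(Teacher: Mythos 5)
Your argument is correct and is essentially the paper's own proof in different bookkeeping: both reduce $R_t$ to the perturbed residue $\bigl((tp_\ell(\alpha_{\tau_k})\bmod q_\ell) - bt\bigr)/q_\ell$ with $bt$ tiny because $b<1/(c-1)$ and $c>a_kq_\ell$, exploit that the residues permute $\{1,\dots,q_\ell-1\}$ so that $\prod_t\|R_t\|$ is essentially $\lfloor q_\ell/2\rfloor!\,(q_\ell-1-\lfloor q_\ell/2\rfloor)!/q_\ell^{\,q_\ell-1}$, and finish with Stirling, treating small $q_\ell$ separately. (Two harmless slips: with $\log(1-x)\ge -x/(1-x)$ and $bt_j\le 1/6$ the harmonic-sum loss is $q_\ell^{-1/5}$ rather than $q_\ell^{-1/6}$, and the ``finitely many small cases'' should be pinned down by making $C_3$ explicit --- but the margins are ample either way, just as in the paper's computation.)
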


\begin{lem}\label{lem:pi2one}
The product $\Pi_2$ defined in \eqref{product} is bounded below by 
\begin{equation}\label{eq:firstpi2bd}
\quad\Pi_2 \geq \frac{2}{5c}\cdot  2^{c- \lfloor \frac{a_k}{2}\rfloor}\cdot \left\lfloor \frac{a_k}{2} \right\rfloor !  \cdot [(a_k-1)!]^{q_\ell-1}  \cdot\prod_{t=1}^{q_\ell-1} \|R_t\| . 
\end{equation}
\end{lem}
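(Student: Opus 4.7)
The plan is to partition the indices $s \in I := \{\lfloor a_k/2 \rfloor + 1, \ldots, c-1\}$ into Voronoi cells determined by which $u_k(t) \in \mathcal{U}$ is nearest to $2s+1$, and to bound each cell's contribution to $\Pi_2$ separately. First, substituting the expression for $1/|e_kc_k|$ from \eqref{ckekformula} into the definition $u_k(t) = 2t/|e_kc_k| - 2\{t\alpha_{\sigma_k}\} + 1$, a short calculation will show that $(u_k(t)-1)/2$ differs from $R_t$ (see \eqref{Rtdefinition}) by an integer. Thus there is a unique integer $m_t$ satisfying $u_k(t) = 2m_t + 1 + 2\rho_t$ with $\rho_t \in [-\tfrac12, \tfrac12]$, and $|\rho_t| = \|R_t\|$. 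Consequently
\[
\mathrm{dist}(2s+1, u_k(t)) \, = \, 2|s - m_t - \rho_t| \qquad \text{for every } s \in \mathbb{Z},
\]
so the odd integer $2m_t+1$ is closest to $u_k(t)$, at distance $2\|R_t\|$.

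Next, Corollary \ref{cor:sizeA} gives $A = 2|e_kc_k|^{-1} \in (2a_k, 2a_k + 4)$, so consecutive points of $\mathcal{U}$ are separated by $A + O(1)$ and $m_{t+1} - m_t \in \{a_k, a_k+1, a_k+2\}$. Each Voronoi cell $S_t := \{s \in \mathbb{Z} : t(s) = t\}$ therefore contains at least $a_k$ consecutive integers clustered around $m_t$. I would then verify that for $t \in \{1, \ldots, q_\ell - 1\}$ the center $m_t$ lies well inside $I$, so that $S_t \cap I$ consists of $a_k$ consecutive integers $m_t + j$ with $j$ ranging over a set $J_t$ containing $0$, while for $t = q_\ell$ the upper endpoint $c-1$ truncates the cell but leaves at least $\lfloor a_k/2 \rfloor$ values of $s$ in $S_{q_\ell} \cap I$.

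For an interior $t$, the distances in $S_t \cap I$ equal $2|j - \rho_t|$ for $j \in J_t$, with the smallest value $2\|R_t\|$ attained at $j = 0$. Applying the elementary bounds $|j+\rho_t| \geq j$ and $|j - \rho_t| \geq j - \tfrac12$ for $j \geq 1$ (valid since $|\rho_t| \leq \tfrac12$), splitting $J_t \setminus\{0\}$ into its positive and negative halves, and invoking the identity $\prod_{j=1}^{M}(j-\tfrac12) = (2M-1)!!/2^M$, a short computation should give
\[
\prod_{s \in S_t \cap I} \mathrm{dist}(2s+1, u_k(t)) \, \geq \, 2 (a_k-1)! \, \|R_t\|.
\]
An analogous one-sided estimate for the truncated cell at $t = q_\ell$ yields a contribution of at least $2^{\lfloor a_k/2\rfloor}\lfloor a_k/2\rfloor!$. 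Multiplying these per-cell bounds and reorganizing the powers of $2$ would then produce \eqref{eq:firstpi2bd}, with the prefactor $2/(5c)$ absorbing any residual loss near the endpoints of $I$.

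The hardest part will be the careful bookkeeping required to identify the cells $S_t \cap I$ at both endpoints of $I$. Depending on the parity of $a_k$ and the exact fractional parts $\rho_t$, the size $|S_t \cap I|$ may fluctuate between $a_k$ and $a_k + 2$, and at the upper endpoint some $s$-values may spill into the truncated cell $S_{q_\ell}$; verifying that the universal prefactor $2/(5c)$ uniformly absorbs all such errors will be the most delicate step of the argument.
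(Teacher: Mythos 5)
Your overall strategy --- the Voronoi decomposition of $I$ by nearest point of $\mathcal{U}$, the identification $u_k(t)=2m_t+1+2\rho_t$ with $|\rho_t|=\|R_t\|$, and the cell-by-cell estimation with the truncated cell at $t=q_\ell$ handled separately --- is exactly the paper's. The proposal fails, however, at the quantitative bookkeeping, and the gap is exponential rather than a matter of constants. Count the powers of $2$: the target \eqref{eq:firstpi2bd} carries the factor $2^{c-\lfloor a_k/2\rfloor}$, which the paper obtains by writing each of the $c-1-\lfloor a_k/2\rfloor$ distances as $2\cdot\tfrac12|u_k(t)-(2s+1)|$ and then bounding the product of \emph{half}-distances over each interior cell below by $\|R_t\|(a_k-1)!$ (inequality \eqref{p2}). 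Your per-cell estimate $\prod_{s\in S_t\cap I}\mathrm{dist}(2s+1,u_k(t))\ge 2(a_k-1)!\,\|R_t\|$ concerns the \emph{full} distances; multiplying it over the $q_\ell-1$ interior cells together with your truncated-cell bound yields only a factor $2^{q_\ell+\lfloor a_k/2\rfloor}$ in place of $2^{c-\lfloor a_k/2\rfloor}$, a deficit of order $2^{(a_k-1)(q_\ell-1)}$ that no ``reorganization'' can recover. The loss is built into your derivation: in a cell of $a_k$ consecutive integers $j$ straddling $0$, the route you describe bounds $\prod_{j\ne0}|j-\rho_t|$ by $M_1!\,(2M_2-1)!!/2^{M_2}$ with $M_1+M_2=a_k-1$, which is of size $(a_k-1)!/2^{\Theta(a_k)}$, whereas matching \eqref{eq:firstpi2bd} requires the half-distances themselves --- not $2^{a_k-1}$ times them --- to have product at least $(a_k-1)!\,\|R_t\|$. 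Your own two-sided analysis shows why this stronger statement is delicate: an interior cell contains both sub-unit half-distances $|\rho_t|$ and $1-|\rho_t|$, and about half of the remaining ones are of the form $j-\max(\rho_t,1-\rho_t)<j$, so the sorted half-distances do not dominate $1,2,\ldots,a_k-1$ termwise; this is precisely the step in the paper's proof that you would need to justify (or scrutinize) rather than bypass with the weaker elementary bounds.

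Second, your bound $2^{\lfloor a_k/2\rfloor}\lfloor a_k/2\rfloor!$ for the truncated cell omits its smallest factor. The paper computes that at $t=q_\ell$, $s=c-1$ the half-distance equals $|R_{q_\ell}|=b\in(0,1/q_{\ell+1})$, i.e.\ the odd integer $2c-1$ lies within $2b=O(1/c)$ of $u_k(q_\ell)$. This factor is not a ``residual loss near the endpoints'' to be absorbed by slack: it is exactly the origin of the prefactor $\tfrac{2}{5c}$, via $b\prod_{r=1}^{\lfloor a_k/2\rfloor}(r-\tfrac{1}{q_{\ell+1}})\ge\tfrac{4}{5c}\lfloor a_k/2\rfloor!$ as in \eqref{p3}. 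Without this factor your last-cell estimate is false; with it, the product of your per-cell bounds falls short of \eqref{eq:firstpi2bd} by yet another factor of $c$. As it stands, the proposal proves a genuinely weaker inequality than the one claimed.
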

We now find a bound for $\Pi_2$ using Lemmas \ref{lem:pi2two} and \ref{lem:pi2one}. By \eqref{factorials} we have
\begin{equation*}
\Big\lfloor \frac{a_k}{2} \Big\rfloor ! \geq \left( \frac{a_k}{2e}\right)^{\frac{a_k }{2}} \quad \text{ and } \quad (a_k-1)! \geq \frac{2}{\sqrt{a_k}} \left( \frac{a_k}{e} \right)^{a_k} .
\end{equation*}
The former is obvious when $a_k$ is even, and follows by an argument similar to \eqref{eq:factrick} when $a_k$ is odd and greater than $6$. Inserting these bounds in \eqref{eq:firstpi2bd} and employing Lemma \ref{lem:pi2two}, we get
\begin{equation}\label{eq:pi2finalbd}
\Pi_2 \geq \frac{\sqrt{a_kq_\ell}}{20ec}\cdot 2^{c-a_k} \left( \frac{1}{e\sqrt{a_k}}\right)^{q_\ell} \cdot \left( \frac{a_k}{e}\right)^{a_k(q_\ell-\frac12)}.
\end{equation}
Combining the bounds established for $\Pi_1$, $\Pi_2$ and $\Pi_3$, we conclude as follows.
\begin{lem}\label{lem:gprodlower}
Let $G$ be given in \eqref{gdef}. Under the assumption that $a_k\geq 6$ we have
\begin{equation*}
\prod_{s=0}^{c-1} G(2s+1) \geq \left( \frac{2a_k}{\pi e^{f(a_k)}c}\right)^c \cdot \frac{2^{-a_k}\pi}{20e^{\frac92}q_{\ell}^{\frac32}c} \cdot \left( \frac{e}{a_k^{5/2}}\right)^{q_{\ell}}.
\end{equation*}
\end{lem}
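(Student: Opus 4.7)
The plan is to assemble the claimed lower bound by substituting the already-established component estimates \eqref{eq:Pi1bd}, \eqref{eq:pi2finalbd} and \eqref{eq:finalPi3bd} for $\Pi_1$, $\Pi_2$, $\Pi_3$ into the master inequality \eqref{eq:Gallvals}, and then rearranging. The core task is bookkeeping: sorting the resulting factors into a $c$-th power, a $q_\ell$-dependent factor, and leftover constants so as to match the target expression. After the substitution, I would first combine $(a_k/e)^{a_k(q_\ell-1/2)}$ from $\Pi_2$ with $(a_k/e)^{a_k/2}$ supplied by $1/\Pi_3$ into a single $(a_k/e)^{a_kq_\ell}$, and reduce $\sqrt{a_k q_\ell}/\sqrt{a_k}$ arising from the same pair to $\sqrt{q_\ell}$.

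The decisive algebraic step is then to split $(a_k/e)^{a_k q_\ell} = (a_k/e)^c \cdot (a_k/e)^{a_k q_\ell - c}$ and $2^{c-a_k} = 2^c \cdot 2^{-a_k}$, after which the $c$-th powers collapse via
\[
\left(\frac{2}{\pi e^{f(a_k)}}\right)^c \cdot 2^c \cdot \left(\frac{e}{2c}\right)^c \cdot \left(\frac{a_k}{e}\right)^c = \left(\frac{2a_k}{\pi e^{f(a_k)} c}\right)^c,
\]
reproducing the first factor of the target. What then remains is to verify that the residual $(a_k/e)^{a_k q_\ell - c}/(e^{q_\ell} a_k^{q_\ell/2})$ dominates $(e/a_k^{5/2})^{q_\ell}$. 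The key input here is the elementary inequality $c \leq (a_k+2)q_\ell$, which I would read off directly from \eqref{qlplusone}: dividing $c = q_{\ell+1}(\alpha_{\tau_k}) + p_\ell(\alpha_{\tau_k})$ by $q_\ell$ gives $c/q_\ell = a_k + p_\ell(\alpha_{\sigma_k})/q_\ell + p_\ell(\alpha_{\tau_k})/q_\ell$, and both convergent ratios lie strictly in $(0,1)$. Since $a_k \geq 6 > e$, the resulting bound $a_k q_\ell - c \geq -2 q_\ell$ yields $(a_k/e)^{a_k q_\ell - c} \geq (e/a_k)^{2q_\ell}$, and dividing by $e^{q_\ell} a_k^{q_\ell/2}$ produces exactly $(e/a_k^{5/2})^{q_\ell}$.

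The main obstacle is organisational rather than technical: one has to recognise that the huge factor $(a_k/e)^{a_k q_\ell}$ produced by the $\Pi_2$-bound must be split between a $c$-th power (matching the target) and a $q_\ell$-scale residual, and that the gap between $a_k q_\ell$ and $c$ is controlled precisely by the continued fraction identity $c/q_\ell < a_k + 2$. Once this split is identified, everything else is routine exponent arithmetic.
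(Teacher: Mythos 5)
Your proposal is correct and follows essentially the same route as the paper: substitute the bounds \eqref{eq:Pi1bd}, \eqref{eq:pi2finalbd}, \eqref{eq:finalPi3bd} into \eqref{eq:Gallvals}, collect the $c$-th powers, and absorb the residual $(a_k/e)^{a_kq_\ell-c}$ into $(e/a_k^{5/2})^{q_\ell}$ via $c<(a_k+2)q_\ell$, which is exactly the inequality the paper invokes. The exponent arithmetic you carry out checks out.
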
 
\begin{proof}
Inserting the bounds established for $\Pi_1$, $\Pi_2$ and $\Pi_3$ in \eqref{eq:Pi1bd}, \eqref{eq:pi2finalbd} and \eqref{eq:finalPi3bd} into \eqref{eq:Gallvals}, we get
\begin{align*}
\prod_{s=0}^{c-1} G(2s+1) &\geq \left(\frac{2}{\pi c}e^{1-f(a_k)} \right)^c \cdot \frac{2^{-a_k} \pi}{20e^{\frac92}q_{\ell}^{\frac32} c} \left(\frac{1}{e\sqrt{a_k}} \right)^{q_{\ell}} \left(\frac{a_k}{e} \right)^{a_kq_{\ell}} \\
&\geq \left(\frac{2a_k}{\pi e^{f(a_k)} c} \right)^c \cdot \frac{2^{-a_k} \pi}{20e^{\frac92}q_{\ell}^{\frac32} c} \left(\frac{e}{a^{\frac52}} \right)^{q_{\ell}},
\end{align*}
where for the last inequality we have used that $c<(a_k+2)q_{\ell}$.
\end{proof}
We are now fully equipped to prove Theorem \ref{thm:boundCk}.
\begin{proof}[Proof of Theorem \ref{thm:boundCk}]
We recall from \eqref{CkBound1} and \eqref{eq:boundGbig} that 
\begin{equation*}
C_k^{c-2} = \frac{1+a^2}{c!} \left| \frac{G(1)^{c-1}G(1+2a^2)}{\prod_{s=1}^{c-1} G(2s+1)}\right| \leq \frac{1}{c!} \cdot \frac{4Ae^{f(a_k)}}{5} \cdot \frac{1}{\left|\prod_{s=1}^{c-1} G(2s+1)\right|}.
\end{equation*}
Using Lemma \ref{lem:gprodlower} and the bound on $c!$ in \eqref{factorials}, we thus get
\begin{equation*}
    C_k^{c-2} \leq \frac{16Ae^{\frac92}q_{\ell}^{\frac32} \sqrt{c}e^{f(a_k)}}{\pi \sqrt{2\pi}} \left( \frac{\pi e^{1+f(a_k)}}{2a_k}\right)^c \cdot 2^{a_k} \cdot \left( \frac{a_k^{\frac52}}{e}\right)^{q_{\ell}}
\end{equation*}
Recall that $A< 2(a_k+2)$ and $q_{\ell} < c/a_k$. This allows us to bound the first term on the right hand side by
\begin{equation*}
    \frac{32e^{\frac92}}{\pi \sqrt{2\pi}} \, \frac{(a_k+2)}{a_k^{3/2}} e^{f(a_k)} c^2 \leq \frac{32e^{\frac92}}{\pi \sqrt{2\pi}} \, \frac{8}{6^{3/2}} e^{f(6)} c^2 \leq 200e^{2.4}c^2 .
\end{equation*}
Inserting this in the expression above, we get
\begin{equation*}
    C_k^{c-2} \leq \left( \frac{\pi e^{1+f(a_k)}}{2a_k}\right)^c \cdot (200e^{2.4}c^2) \cdot 2^{a_k} \cdot \left(\frac{a_k^{\frac52}}{e} \right)^{q_{\ell}} .
\end{equation*}
Raising both sides to the power $1/c$ and using again that $c>a_kq_{\ell}$ completes the proof of Theorem \ref{thm:boundCk}.
\end{proof}
\begin{proof}[Proof of Corollary \ref{cor:boundCkq=1}]
When $q_{\ell}=1$ we have $c=a_k+2$, and revisiting Lemma \ref{lem:gprodlower} we see that we can obtain an improved bound on $\prod G(2s+1)$ in this case. From Corollary \ref{cor:Gbound_m=1} and Remark \ref{rem:Gsmallx} it follows that 
\begin{equation*}
    \prod_{s=0}^{c-1} G(2s+1) \geq \left( \frac{2}{\pi}e^{-g(a_k)}\right)^c \cdot \frac{\Pi_1 \cdot \Pi_2}{\Pi_3},
\end{equation*}
with $\Pi_1$, $\Pi_2$ and $\Pi_3$ defined as in \eqref{firstproduct}--\eqref{denomproduct}, respectively, and $g(a_k)$ given in \eqref{eq:ga_k}. We keep the bounds for $\Pi_1$ and $\Pi_3$ established in \eqref{eq:Pi1bd} and \eqref{eq:finalPi3bd}, and note that the bound on $\Pi_2$ in \eqref{eq:firstpi2bd} simplifies to 
\begin{equation*}
    \Pi_2 \geq \frac{2}{5c} \cdot 2^{c-\lfloor \frac{a_k}{2} \rfloor} \cdot \Big\lfloor \frac{a_k}{2} \Big\rfloor ! \geq \frac{8}{5c} \left( \frac{a_k}{e}\right)^{\frac{a_k}{2}}.
\end{equation*}
Inserting all three bounds above, we get 
\begin{equation}\label{eq:Gboundimproved}
    \prod_{s=0}^{c-1} G(2s+1) \geq \frac{8\pi}{5e^{\frac{7}{2}} c\sqrt{a_k}}\left( \frac{e^{1-g(a_k)}}{\pi c}\right)^c \left( \frac{a_k}{e}\right)^{a_k} 
    \geq \frac{8 \pi}{5e^{\frac{3}{2}}c^{\frac{7}{2}}} \left(\frac{e^{-g(a_k)}a_k}{\pi c} \right)^c,
\end{equation}
where we have used that $c=a_k+2>a_k$.

Again we have that
\begin{equation*}
C_k^{c-2} = \frac{1+a^2}{c!} \left| \frac{G(1)^{c-1}G(1+2a^2)}{\prod_{s=1}^{c-1} G(2s+1)}\right| \leq \frac{1}{c!} \cdot \frac{4Ae^{f(a_k)}}{5} \cdot \frac{1}{\left|\prod_{s=1}^{c-1} G(2s+1)\right|}.
\end{equation*}
Inserting $A< 2(a_k+2)=2c$ and the improved bound \eqref{eq:Gboundimproved} on $\prod G(2s+1)$, we get
\begin{equation*}
   C_k^{c-2} \leq \frac{1}{c!} \cdot \frac{e^{f(a_k)+\frac32}c^{\frac92}}{\pi}\left( \frac{\pi e^{g(a_k)}c}{a_k}\right)^c \leq \frac{e^{f(a_k)+\frac32}}{\pi \sqrt{2\pi}} \cdot c^4 \cdot \left( \frac{\pi e^{1+g(a_k)}}{a_k}\right)^c,
\end{equation*}
 where for the last inequality we have used the lower bound on $c!$ in \eqref{factorials}. The proof is completed by bounding $e^{f(a_k)}$ by $e^{f(6)}$ and raising both sides to the power $1/c$. 
\end{proof}

\subsection{Bounding $C_k$ when $\ell \equiv 1 \pmod 2$} Now let us consider the case of odd period lengths $\ell$. By \eqref{cvaluesodd}, the constant $C_k$ is then given by 
 \begin{equation}\label{cest2}
  C_k^{c}  = \frac{G(1)^c}{\prod\limits_{s=1}^{c}\left|s-a\right| } \prod_{s=0}^{c-1}|G(2a -2s -1)|^{-1} ,
\end{equation}
where $a$ and $c$ are given in \eqref{abc} and $G$ is the function defined in \eqref{gdef}. Our goal is again to derive a bound for $C_k$ in the case when $k$ is the index such that $\max_j a_j = a_k$.

The assumptions that $\ell$ is odd and $a_k\geq 6$ necessarily imply that $c \geq 13;$ we make use of this inequality in the estimates that follow. The definitions of $a$ and $c$ in \eqref{abc} imply that 
$$   \frac{1}{a-c} \, =\, \frac{\sqrt{c^2+4} +c }{ 2 } < c + \frac{1}{13} \qquad \text{ and } \qquad  |a-s| \, > \, c-s, \quad s=1,\ldots, c-1 $$
and therefore
\begin{equation} \label{sminusa}
\prod\limits_{s=1}^{c}\left|s-a\right|^{-1} \leq \frac{c+ \frac{1}{13}}{(c-1)!} = \frac{c(c+\frac{1}{13})}{c!} \stackrel{\eqref{factorials}}\leq \frac{\sqrt{c}(c+\frac{1}{13})}{\sqrt{2\pi}}\Big(\frac{e}{c}\Big)^c .
\end{equation}

We now seek a lower bound for the product 
\begin{equation} \label{product_odd}
\prod\limits_{s=0}^{c-1}|G(2a -2s -1)| = \prod_{s=0}^{c-1} |G(2s+1 + 2(a-c))| = \prod_{s=0}^{c-1} |G(2s+1-2b)| .
\end{equation} 
We argue as in the case of even $\ell$, and use Theorem \ref{thm:Gbounds} to bound the terms of this product. We use \eqref{eq:Gsmallx} to bound the factors of \eqref{product_odd} with $0\leq s \leq \lfloor \frac{a_k}{2} \rfloor -1$, which gives 
\begin{equation*} 
\prod_{s = 0}^{ \lfloor \frac{a_k}{2} \rfloor - 1} |G(2s+1-2b)| \geq \left(\frac{2}{\pi e^{f(a_k)}} \right)^{\lfloor \frac{a_k}{2}\rfloor}. 
\end{equation*}
For the factors of \eqref{product_odd} corresponding to $\lfloor \frac{a_k}{2} \rfloor +1 \leq s \leq c-1,$ the bound \eqref{Grate} gives 
\begin{align*} 
\prod_{s=\lfloor \frac{a_k}{2}\rfloor+1}^{c-1} \hspace{-3mm}|G(2s+1-2b)| \geq & \hspace{-4mm}
\prod_{s= \lfloor \frac{a_k}{2} \rfloor+1}^{c-1} \hspace{-3mm}\frac{2}{\pi e^{f(a_k)}}\!\!\left( 1 - \frac{2}{3 A m_{s}} \right)\!\!\left(1 - \frac{1}{ A m_{s}}\right)^2 \frac{\mathrm{dist}\big(2s+1-2b,\mathcal{U}\big)}{2s+1-2b}  , 
\end{align*}
where the integers $m_s$ are defined in Theorem \ref{thm:Gbounds}. For the factor $ |G(2\lfloor \tfrac{a_k}{2}\rfloor +1-2b)| $ we use the bound 
 \begin{equation*}
|G(2 \lfloor \tfrac{a_k}{2} \rfloor+1 - 2b)| \geq \frac{2}{\pi} e^{-f(a_k)} \left( 1 - \frac{2}{3A} \right) \left( 1 - \frac{1}{A }\right)^2  \frac{1}{  2\lfloor \tfrac{a_k}{2} \rfloor + 1 - 2b} \cdot
\end{equation*}
Combining the estimates above, we obtain 
\begin{equation}\label{eq:Gallvals_odd}
\prod_{s=0}^{c-1} |G(2s+1-2b)| \geq \left( \frac{2}{\pi e^{f(a_k)}} \right)^c \cdot \frac{\Pi_1 \cdot \Pi_2'}{\Pi_3'}, 
\end{equation}
where $\Pi_1$ is the  product defined in \eqref{firstproduct}, while 
\begin{equation} \label{eq:pi23marked}
\Pi_2' = \!\!\!\prod_{s = {\lfloor \frac{a_k}{2} \rfloor}+1}^{c-1}\!\! \mathrm{dist}(2s+1-2b,\, \mathcal{U}) \quad \text{ and } \quad  \Pi_3'= \prod_{s=\lfloor \frac{a_k}{2} \rfloor}^{c-1} (2s+1-2b) .
\end{equation}

Let us first find a bound for $\Pi_3'$ by comparing it with $\Pi_3$ in \eqref{denomproduct}. Note that the bound \eqref{eq:finalPi3bd} on $\Pi_3$ does not depend on the parity of $\ell$. Since 
\[ -2b = \frac{4}{\sqrt{c^2+4}+c} < \frac{2}{c}, \]
we have 
\begin{eqnarray*}
\Pi_3' &\leq& \prod_{s=\lfloor \frac{a_k}{ 2} \rfloor}^{c-1} \Big(2s+1+\frac2c \Big) 
\,\, = \,\, \Pi_3 \cdot  \prod_{s=\lfloor\frac{a_k}{ 2} \rfloor}^{c-1}\Big(1 + \frac{2}{(2s+1)c}\Big).
\end{eqnarray*}
We find that 
\begin{align*}
\prod_{s=\lfloor \frac{a_k}{ 2} \rfloor}^{c-1}\Big(1 + \frac{2}{(2s+1)c}\Big) & = \exp\Bigg( \sum_{s=\lfloor \frac{a_k}{2}\rfloor}^{c-1}\log\Big(1 + \frac{2}{(2s+1)c}\Big)\Bigg) \\
& < \exp\Bigg(\frac{2}{c}\sum_{s=\lfloor \frac{a_k}{2}\rfloor}^{c-1}\frac{1}{2s+1}\Bigg) < \exp\Big(\frac{\log (c-1)}{c}\Big) < e^{\frac{1}{5}} ,
\end{align*}
so in view of \eqref{eq:finalPi3bd} we deduce that 
\begin{equation} \label{Pi_3_odd_estimate}
    \Pi_3'  \leq \frac{e^{\frac{17}{10}}\sqrt{a_k}}{\pi} \left(\frac{e}{a_k}\right)^{\frac{a_k}{ 2}} \left( \frac{2c}{e}\right)^{c}.
\end{equation}

As for the even period case, the estimation of $\Pi_2'$ is more elaborate. When analysing the factors in $\Pi_2'$, the numbers $R_t+b$ appear naturally, where $R_t$ is defined as in \eqref{Rtdefinition}. The size of products over $\|R_t+b\|$ and its relation to $\Pi_2'$ is stated in the lemmas below. Note that these are analogues of Lemmas \ref{lem:pi2two} and \ref{lem:pi2one} for the even period case. The proofs are postponed to Section \ref{sec:pi2lemmas}.
\begin{lem}\label{lem:pi2odd1}
When $\ell$ is odd, the numbers $R_t, t=1,\ldots, q_\ell -1$ defined in \eqref{Rtdefinition} satisfy 
\begin{equation} \label{eq:boundRtplusb}
\prod_{t=1}^{q_\ell-1} \|R_t+b\| \geq  \frac{4\pi}{ e^3 (2e)^{q_\ell}} \, \cdot 
\end{equation}
\end{lem}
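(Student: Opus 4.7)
\emph{Plan.} The approach is to adapt the proof of Lemma \ref{lem:pi2two} to handle the additional additive shift by $b$. First, since $\gcd(\pltau, \qltau)=1$, the fractional parts $\{t\pltau/\qltau\}$ for $t=1,\ldots,q_\ell-1$ form a permutation of $\{j/q_\ell : 1\leq j\leq q_\ell-1\}$. I would therefore write
$$R_t+b = \frac{\pi(t)}{\qltau}+\eta_t+b,$$
where $\pi$ is the induced permutation on $\{1,\ldots,q_\ell-1\}$ and $\eta_t = t\bigl(\pl/\ql-\alpha_{\sigma_k}\bigr)-2bt/\ql$. The standard continued-fraction bound $|\pl/\ql-\alpha_{\sigma_k}|<1/(\ql\qlplusone)$ combined with $|b|\leq 1/c\leq 1/q_\ell$ (which follows from \eqref{abc} and the identity $c=\qlplusone+\pl\geq q_\ell$) yields $|\eta_t|=O(1/q_\ell)$ uniformly in $t$.

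Next, I would invoke the classical trigonometric identity
$$\prod_{j=1}^{q_\ell-1}2\bigl|\sin\pi(\beta-j/q_\ell)\bigr| = \left|\frac{\sin\pi q_\ell\beta}{\sin\pi\beta}\right|,$$
applied at a suitable $\beta$ close to $b$, combined with the pointwise inequality $|\sin\pi y|\leq\pi\|y\|$, in order to convert the product $\prod_t\|R_t+b\|$ into an expression involving the ratio $|\sin\pi q_\ell\beta|/|\sin\pi\beta|$, which is comparable to $q_\ell$ since $|\beta|$ is of order $1/q_\ell$. The perturbations $\eta_t$ contribute only a multiplicative correction of size $e^{O(1)}$ once one bounds $\sum_t|\eta_t|/\|\pi(t)/q_\ell+b\|$ via a summation-by-parts argument analogous to the one carried out in the proof of Theorem \ref{thm:Gbounds}. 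Sharpening $|\sin\pi y|\leq\pi\|y\|$ for those $\|y\|$ that are bounded away from $0$ (by effectively replacing the factor $\pi$ with the smaller $2$ there) upgrades the naive denominator $(2\pi)^{q_\ell}$ to the claimed $(2e)^{q_\ell}$.

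\emph{Main obstacle.} The chief difficulty is the possible existence of a single index $t_0$ with $\pi(t_0)/q_\ell\approx -b\pmod 1$, for which the factor $\|R_{t_0}+b\|$ can be as small as $|b|\sim 1/q_\ell$. This single near-integer factor accounts for the absence of the $\sqrt{q_\ell}$ that is present in the even-period bound of Lemma \ref{lem:pi2two}, and its careful quantification is what produces the explicit constant $4\pi/e^3$ appearing in the statement.
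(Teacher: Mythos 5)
Your opening step is the same as the paper's: the paper proves this lemma by establishing odd-period analogues of Lemma \ref{specialvaluelowerbound}, Lemma \ref{fractionalpartlowerbound} and Corollary \ref{corollary} (namely \eqref{64}, Lemma \ref{fractionalpartlowerboundodd} and Corollary \ref{corollary2}), and then repeats the double-factorial/Stirling computation from the proof of Lemma \ref{lem:pi2two}. However, your proposal has two genuine problems. First, the ``main obstacle'' you describe does not exist, and the reasoning built around it is wrong. By \eqref{ltherror} one has $\eta_t=-tb/q_\ell$, so $\eta_t+b=b(1-t/q_\ell)$ and hence $|\eta_t+b|<|b|<1/c\leq 1/q_{\ell+1}\leq 1/(a_kq_\ell)$. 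Since $\{tp_{\ell}(\alpha_{\tau_k})/q_\ell\}=i_t/q_\ell$ with $1\leq i_t\leq q_\ell-1$, every factor satisfies $\|R_t+b\|\geq 1/q_\ell-1/q_{\ell+1}\geq \tfrac{5}{6}q_\ell^{-1}$: there is no index with $\|R_{t_0}+b\|\asymp|b|$. (The factor of size $|b|$ that does occur in this part of the paper occurs at $t=q_\ell$, outside the range of this lemma, and is handled in Lemmas \ref{lem:pi2one} and \ref{lem:pi2odd2}.) Note also that $|b|\asymp 1/q_{\ell+1}$, not $1/q_\ell$; if such a factor really were present, bounding it below by $|b|$ would introduce a loss of order $1/a_k$ and the stated $a_k$-independent constant $4\pi/e^3$ could not be recovered this way. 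Relatedly, your bound $|\eta_t|=O(1/q_\ell)$ with an unspecified constant is too weak: for $i_t\in\{1,q_\ell-1\}$ it cannot exclude a degenerate factor, and the explicit bound $|\eta_t|<|b|<1/q_{\ell+1}$ is exactly what is needed.

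Second, the passage from the naive denominator $(2\pi)^{q_\ell}$ to $(2e)^{q_\ell}$ is asserted rather than proved, and it is the entire quantitative content of the lemma. Making ``$\pi$ is effectively replaced by $2$'' precise means bounding $\prod_{j=1}^{q_\ell-1}\|j/q_\ell+b\|\,/\,|\sin\pi(j/q_\ell+b)|$ from below; after folding $j\leftrightarrow q_\ell-j$ this is precisely the estimate $\prod_{j\leq q_\ell/2}(j/q_\ell-1/q_{\ell+1})^2\gtrsim ((2Q-2)!)/(2q_\ell)^{2(Q-1)}$ that the paper carries out directly via \eqref{factorials}, using the ``at most two-to-one'' correspondence of Corollary \ref{corollary2}. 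So the sine-product identity is a detour that does not avoid the Stirling computation, and as written your argument stops exactly where the real work begins. To repair the proof, drop the phantom near-zero factor, record the exact identity $\eta_t+b=b(1-t/q_\ell)$ together with \eqref{64}, and then run the chain of inequalities from the proof of Lemma \ref{lem:pi2two} with $\|R_t\|$ replaced by $\|R_t+b\|$.
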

\begin{lem}\label{lem:pi2odd2}
The product $\Pi_2'$ defined in \eqref{eq:pi23marked} is bounded below by
\begin{equation}\label{eq:boundpi2marked}
    \Pi_2' \geq 2^{c-\lfloor \frac{a_k}{2} \rfloor-1} \cdot\left\lfloor \frac{a_k}{ 2}\right\rfloor ! \cdot [(a_k-1)!]^{q_\ell-1} \cdot \prod_{t=1}^{q_\ell-1}\|R_t+b\| 
\end{equation}
\end{lem}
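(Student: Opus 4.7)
The plan is to mirror the scheme used to prove Lemma \ref{lem:pi2one}, absorbing the extra shift by $-2b$ that appears in $\mathrm{dist}(2s+1-2b,\,\mathcal{U})$. I would partition the range $s\in\{\lfloor a_k/2\rfloor+1,\ldots,c-1\}$ according to the index $m=m(s)\in\{1,\ldots,q_\ell\}$ that minimises $|u_k(m)-(2s+1-2b)|$. By \eqref{eq:sizeA} and $|\delta_t|\leq 1$, each value of $m$ is attained on a consecutive block of roughly $a_k$ values of $s$; exactly one of these blocks (at the lower boundary) is truncated to approximately $\lfloor a_k/2\rfloor$ elements, and this truncated block will not contribute an $\|R_m+b\|$ factor.

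The crux of the argument is the mod-$1$ identity
\begin{equation*}
\frac{u_k(m)+2b-1}{2}\equiv R_m+b\pmod{1}.
\end{equation*}
To verify it I would expand $u_k(m)=Am+1-2\{m\alpha_{\sigma_k}\}$ using the formula $A/2=|c_ke_k|^{-1}$ from \eqref{ckekformula}, and note that the integer contributions $a_km$, $\lfloor m\alpha_{\sigma_k}\rfloor$, and $\lfloor mp_\ell(\alpha_{\tau_k})/q_\ell(\alpha_{\tau_k})\rfloor$ may be discarded modulo $1$; what is left is exactly the expression $R_m+b$ of \eqref{Rtdefinition}. Consequently, the arithmetic progression $\bigl(2s+1-2b-u_k(m)\bigr)_s$ of common difference $2$ attains minimum absolute value $2\|R_m+b\|$, and the remaining factors within the block have the form $2i\pm\eta_m$ with $\eta_m=2\|R_m+b\|\leq 1$.

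For a full block, the product $\eta_m\cdot\prod_{i=1}^{j_0}(2i+\eta_m)\cdot\prod_{i=1}^{a_k-1-j_0}(2i-\eta_m)$ is at least $2\|R_m+b\|\cdot 2^{a_k-1}\cdot(a_k-1)!$, by pairing $(2i-\eta_m)(2i+\eta_m)\geq(2i-1)(2i+1)$ (which is valid because $\eta_m\leq 1$) together with the crude estimates $(2i+\eta_m)\geq 2i$ and $(2i-\eta_m)\geq 2i-1$ on any remaining unpaired terms. The truncated boundary block of $\lfloor a_k/2\rfloor$ factors contributes at least $2^{\lfloor a_k/2\rfloor}\cdot\lfloor a_k/2\rfloor!$ and carries no central $\|R+b\|$ factor, which accounts for the product running only over $t=1,\ldots,q_\ell-1$ in the stated bound. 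Multiplying the $q_\ell-1$ full-block contributions with the boundary contribution, and checking that the powers of $2$ collapse to $2^{c-\lfloor a_k/2\rfloor-1}$ using the fact that $\Pi_2'$ is a product of $c-1-\lfloor a_k/2\rfloor$ factors, yields the claimed inequality.

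The main obstacle is the verification of the mod-$1$ congruence displayed above; this is the single point where the additional $-2b$ translation in the argument of $\mathrm{dist}$ couples with the shift by $b$ built into $R_m+b$. Once this identification is in hand, the remaining bookkeeping is identical in structure to (and slightly simpler than) the argument used for Lemma \ref{lem:pi2one}, and the absence of the prefactor $\tfrac{2}{5c}$ appearing in the even-period version is traceable to the fact that in the odd-period situation no auxiliary right-endpoint correction is incurred.
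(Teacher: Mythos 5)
Your overall strategy (block decomposition by the nearest point of $\mathcal{U}$, the mod-$1$ identification of the minimal factor with $R_t+b$, and factorial bounds on the remaining factors in each block) is the same as the paper's, but the bookkeeping of the boundary blocks contains a genuine error. The special block --- the one with only about $\lfloor a_k/2\rfloor$ admissible values of $s$ and no $\|R_t+b\|$ factor --- sits at the \emph{upper} end, $t=q_\ell$, not at the lower end. Indeed, the computation in \eqref{minsize} shows that for $t=q_\ell$ and $s=c-1$ one has $\tfrac12\bigl(u_k(q_\ell)-(2s+1-2b)\bigr)=1$ exactly; equivalently $R_{q_\ell}+b=0$ (since $p_\ell-q_\ell\alpha_{\sigma_k}=b$ by \eqref{ltherror}), so $\|R_{q_\ell}+b\|=0$. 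Your blanket claim that the progression $\bigl(2s+1-2b-u_k(m)\bigr)_s$ attains minimum absolute value $2\|R_m+b\|$ therefore fails for $m=q_\ell$: the unconstrained minimizer would be $s=c$, which lies outside the admissible range $s\le c-1$, and the true minimum over admissible $s$ is $2$ (half-factor equal to $1$). If you treated $t=q_\ell$ as a ``full block'', your lower bound would contain the factor $\|R_{q_\ell}+b\|=0$ and be vacuous. It is precisely because this block is cut off at $s=c-1$, with integer half-factors $1,2,\ldots$, that it contributes $\lfloor a_k/2\rfloor!$ and that the product in \eqref{eq:boundpi2marked} runs only over $t=1,\ldots,q_\ell-1$.

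Conversely, the block at the lower boundary is the $t=1$ block; its center $u_k(1)\approx A$ corresponds to $s\approx a_k$, well inside the admissible range $s\ge\lfloor a_k/2\rfloor+1$, so this block is essentially not truncated (the paper shows it contains at least $a_k$ values of $s$) and its minimal factor genuinely is $\|R_1+b\|$, which can be small (of order $1/q_{\ell+1}$ by Corollary \ref{corollary2}). Dropping this factor and bounding the lower block by $2^{\lfloor a_k/2\rfloor}\lfloor a_k/2\rfloor!$, as you propose, is not valid. Once the roles of the two boundary blocks are swapped --- each block $t=1,\ldots,q_\ell-1$ contributing $\|R_t+b\|\cdot(a_k-1)!$ times the appropriate power of $2$, and the $t=q_\ell$ block contributing $\lfloor a_k/2\rfloor!$ via \eqref{minsize} --- your argument aligns with the paper's, and the remaining details (the mod-$1$ congruence for $t<q_\ell$, the pairing estimate for the non-minimal factors, the count of powers of $2$) go through.
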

Let us now bound $\Pi_2'$ using Lemmas \ref{lem:pi2odd1} and \ref{lem:pi2odd2}. We bound the factorials in \eqref{eq:boundpi2marked} using \eqref{factorials}, and combined with Lemma \ref{lem:pi2odd1} this gives
\begin{equation}\label{eq:pi2markedfinal}
\Pi_2' \geq \frac{3\sqrt{a_k}}{20} \cdot 2^{c-a_k}  \left(\frac{1}{e\sqrt{a_k}} \right)^{q_{\ell}} \cdot  \Big(\frac{a_k}{e}\Big)^{a_k(q_\ell -\frac12)} \cdot 
\end{equation}
Combining the bounds for $\Pi_1$, $\Pi_2'$ and $\Pi_3'$ established in \eqref{eq:Pi1bd}, \eqref{Pi_3_odd_estimate} and \eqref{eq:pi2markedfinal}, we get the following analogue of Lemma \ref{lem:gprodlower}. The proof is omitted. 
\begin{lem}\label{lem:Gboundodd}
Let $G$ be defined in \eqref{gdef}. Under the assumption that $a_k\geq 6$ we have 
\begin{equation*}
    \prod_{s=0}^{c-1} |G(2s+1-2b)| \geq \left( \frac{2a_k}{\pi e^{f(a_k)} c}\right)^c \cdot \frac{3\pi \cdot 2^{-a_k}}{20e^{19/5}q_{\ell}^2} \cdot \left( \frac{e}{a_k^{5/2}}\right)^{q_{\ell}}.
\end{equation*}
\end{lem}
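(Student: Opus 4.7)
The plan is to mirror the proof of Lemma \ref{lem:gprodlower} almost verbatim, using the odd-period analogues of the three factor bounds already established in this section. I would start from the decomposition \eqref{eq:Gallvals_odd} and substitute the lower bound on $\Pi_1$ from \eqref{eq:Pi1bd} (which is parity-independent), the lower bound on $\Pi_2'$ from \eqref{eq:pi2markedfinal}, and the upper bound on $\Pi_3'$ from \eqref{Pi_3_odd_estimate}. At that point the statement reduces to a routine algebraic consolidation structurally identical to the one carried out in the even case.

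The manipulations are as follows. The $\sqrt{a_k}$ factors in $\Pi_2'$ and $1/\Pi_3'$ cancel. The factor $2^{c-a_k}$ from $\Pi_2'$ combines with the $2^{-c}$ inside $(2c/e)^{-c}$ to leave a clean $2^{-a_k}$. The powers $(a_k/e)^{a_k(q_\ell - 1/2)}$ from $\Pi_2'$ and $(a_k/e)^{-a_k/2}$ from $1/\Pi_3'$ consolidate to $(a_k/e)^{a_k q_\ell}$. To reshape this into the target form, I would split $(a_k/e)^{a_k q_\ell} = (a_k/e)^c \cdot (a_k/e)^{a_k q_\ell - c}$, use the inequality $c \leq (a_k + 2)q_\ell$ (which follows from Corollary \ref{cor:sizeA} together with \eqref{qlplusone}) to lower-bound the second factor by $(e/a_k)^{2q_\ell}$, absorb $(a_k/e)^c$ into the front term to form $(2a_k/(\pi e^{f(a_k)} c))^c$, and then combine $(e/a_k)^{2q_\ell}$ with the $(e\sqrt{a_k})^{-q_\ell}$ factor from $\Pi_2'$ to produce the desired $(e/a_k^{5/2})^{q_\ell}$.

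The scalar constants aggregate as $\tfrac{3}{20}\cdot \tfrac{\pi}{e^{17/10}}\cdot \tfrac{1}{e^2 q_\ell^2} = \tfrac{3\pi}{20\, e^{37/10} q_\ell^2}$, which is slightly stronger than the claimed $\tfrac{3\pi}{20\, e^{19/5} q_\ell^2}$ (since $37/10 < 19/5$), so the stated inequality follows immediately. I do not foresee any substantial obstacle: all analytic content has already been packaged into Lemmas \ref{lem:pi2odd1}--\ref{lem:pi2odd2} and the earlier estimates for $\Pi_1$ and $\Pi_3'$, so what remains is careful bookkeeping of the exponents of $a_k$ and $e$, exactly as in the proof of Lemma \ref{lem:gprodlower}.
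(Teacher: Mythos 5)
Your proposal matches the paper's (omitted) proof exactly: one combines \eqref{eq:Gallvals_odd} with the bounds \eqref{eq:Pi1bd}, \eqref{eq:pi2markedfinal} and \eqref{Pi_3_odd_estimate}, and the bookkeeping you describe — including the use of $c \leq (a_k+2)q_\ell$ to produce the factor $(e/a_k^{5/2})^{q_\ell}$, and the observation that the constant actually comes out as $e^{37/10}$, slightly stronger than the stated $e^{19/5}$ — is correct. (One harmless slip: the factor contributed by $1/\Pi_3'$ is $(a_k/e)^{+a_k/2}$, not $(a_k/e)^{-a_k/2}$, but the consolidated exponent $a_k q_\ell$ you state is the right one.)
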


With Lemma \ref{lem:Gboundodd} established, we are equipped to prove Theorem \ref{thm:boundCkodd}.
\begin{proof}[Proof of Theorem \ref{thm:boundCkodd}]
Recall from \eqref{cest2} that 
\begin{equation*}
    C_k^c \leq \left( \prod_{s=1}^c |s-a| \prod_{s=0}^{c-1} |G(2a-2s-1)|\right)^{-1},
\end{equation*}
where we have used that $|G(1)|\leq 1$. Using \eqref{sminusa} and Lemma \ref{lem:Gboundodd}, we get
\begin{equation*}
    C_k^c \leq 40 c^{\frac32} \left( \frac{\pi e^{1+f(a_k)}}{2a_k}\right)^c \cdot 2^{a_k} \cdot q_{\ell}^2 \left( \frac{a_k^{\frac52}}{e}\right)^{q_{\ell}},
\end{equation*}
where we have replaced $(c+1/13)$ in \eqref{sminusa} by the upper bound $27c/26$. Raising both sides to the power $1/c$ and recalling that $c>a_kq_{\ell}$, we get
\begin{align*}
C_k &\leq \frac{\pi}{2a_k}e^{1+f(a_k)} \left(40 c^{\frac32} \right)^{\frac{1 }{c}} \cdot 2^{\frac{1 }{q_{\ell}}} \cdot q_\ell^{\frac{2}{a_k q_\ell}} \left( \frac{a_k^{ \frac{5}{2}}}{e}\right)^{\frac{1 }{a_k}} \\
&\leq \frac{\pi}{2a_k}e^{1+f(a_k)} \left(40 c^{\frac32} \right)^{ \frac{1 }{ c}} \cdot 2^{\frac{1 }{ q_{\ell}}} \cdot a_k^{\frac{5 }{2a_k}}, 
\end{align*}
where in the final step we have used that $q_{\ell}^{2/q_{\ell}}<e$.
\end{proof}

\subsection{Estimation of $\Pi_2$ and $\Pi_2'$\label{sec:pi2lemmas}}
Let us now turn to the proofs of Lemmas \ref{lem:pi2two} -- \ref{lem:pi2one} and \ref{lem:pi2odd1} -- \ref{lem:pi2odd2}, which lay the 
foundation for the estimates of $\Pi_2$ and $\Pi_2'$ above. 

\subsubsection{Proofs of Lemmas \ref{lem:pi2two} and \ref{lem:pi2odd1}}
We consider first Lemmas \ref{lem:pi2two} and \ref{lem:pi2odd1}, which provide bounds on products of factors $\|R_t\|$ and $\|R_t+b\|$, with $R_t$ given in \eqref{Rtdefinition}. Recall that these are analogous statements relevant to the cases of even and odd period lengths $\ell$, respectively. 

We treat first the proof of Lemma \ref{lem:pi2two}, and begin by examining the size of each of the three terms appearing in the expression for $R_t$ in \eqref{Rtdefinition}.
\begin{lem} \label{specialvaluelowerbound}
	When $\ell$ is even, we have
	\begin{equation*} 
	-\frac{1}{q_\ell q_{\ell+1}}\, < \, \left(\frac{p_\ell}{q_\ell} -\alpha_{\sigma_k}\right) -\frac{2b}{q_\ell}\, <\,  - \frac{a_k}{a_k +1}\, \frac{1}{q_\ell q_{\ell+1} }   \, \,  \cdot
	\end{equation*}
\end{lem}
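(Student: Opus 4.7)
The strategy is to reduce the expression $\big(\tfrac{p_\ell}{q_\ell}-\alpha_{\sigma_k}\big)-\tfrac{2b}{q_\ell}$ to a simple multiple of $b$ by computing the minimal polynomial of $\alpha_{\sigma_k}$ explicitly, and then verify the resulting bounds on $b$ by short algebra. Throughout, abbreviate $P:=p_\ell(\alpha_{\sigma_k})$ and $Q:=q_{\ell+1}(\alpha_{\sigma_k})$, and recall from \eqref{qlplusone} that $c(\alpha_{\sigma_k})=c$ and $b(\alpha_{\sigma_k})=b$.

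\textbf{Step 1 (computing the expression).} Since $\alpha_{\sigma_k}$ is purely $\ell$-periodic, the complete quotient of $\alpha_{\sigma_k}$ after $\ell$ partial quotients equals $1/\alpha_{\sigma_k}$ itself. Substituting this into the standard identity $\alpha_{\sigma_k}=(\zeta\,p_{\ell+1}(\alpha_{\sigma_k})+p_\ell(\alpha_{\sigma_k}))/(\zeta\,q_{\ell+1}(\alpha_{\sigma_k})+q_\ell(\alpha_{\sigma_k}))$ with $\zeta=1/\alpha_{\sigma_k}$ yields the quadratic
\[
q_\ell\,\alpha_{\sigma_k}^{2}+(Q-P)\,\alpha_{\sigma_k}-p_{\ell+1}(\alpha_{\sigma_k})=0.
\]
For even $\ell$ the determinant identity $p_{\ell+1}q_\ell-PQ=(-1)^{\ell-1}=-1$ gives $p_{\ell+1}(\alpha_{\sigma_k})=(PQ-1)/q_\ell$, so the discriminant collapses to $(Q-P)^{2}+4(PQ-1)=(P+Q)^{2}-4=c^{2}-4$. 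Taking the positive root,
\[
\alpha_{\sigma_k}=\frac{P-Q+\sqrt{c^{2}-4}}{2\,q_\ell},\qquad\text{so}\qquad \frac{P}{q_\ell}-\alpha_{\sigma_k}=\frac{c-\sqrt{c^{2}-4}}{2\,q_\ell}=\frac{b}{q_\ell}.
\]
Consequently $\big(P/q_\ell-\alpha_{\sigma_k}\big)-2b/q_\ell=-b/q_\ell$, and multiplying the target inequality by $-q_\ell$ reduces the lemma to
\[
\frac{a_k}{(a_k+1)Q}\;<\;b\;<\;\frac{1}{Q}.
\]

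\textbf{Step 2 (verifying the two bounds).} Using $b=2/(c+\sqrt{c^{2}-4})$, the upper bound $b<1/Q$ is equivalent to $\sqrt{c^{2}-4}>Q-P$, which after squaring reduces to $PQ>1$; this is immediate for even $\ell\geq 2$, since the recursion for the convergents of $\alpha_{\sigma_k}$ gives $P\geq p_2=1$ and $Q\geq q_3\geq 2$. For the lower bound, $a_k/((a_k+1)Q)<b$ rearranges to $(a_k+2)Q-a_kP>a_k\sqrt{c^{2}-4}$. Writing $Q=a_kq_\ell+q_{\ell-1}(\alpha_{\sigma_k})$ and using that $q_\ell\geq P$ (all convergents of $\alpha_{\sigma_k}$ lie in $[0,1]$) together with $q_{\ell-1}(\alpha_{\sigma_k})\geq 1$, one has $Q-a_kP\geq q_{\ell-1}(\alpha_{\sigma_k})\geq 1$; in particular the left-hand side of the rearranged inequality is positive, so squaring is justified, and the bound reduces to
\[
(a_k+1)\,Q\,(Q-a_kP)+a_k^{2}\;>\;0,
\]
which is clear from the same bound $Q-a_kP\geq 1$.

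\textbf{Main obstacle.} The analytic content is light; the real difficulty is careful bookkeeping—keeping straight which convergents belong to $\alpha_{\sigma_k}$ versus the original $\alpha$, using the parity of $\ell$ correctly both in the sign of $b$ (which is positive precisely because $\ell$ is even in the notation of \eqref{abc}) and in the sign of the determinant identity, and verifying positivity before squaring. Once the expression is identified as $-b/q_\ell$, the remaining estimates are entirely elementary.
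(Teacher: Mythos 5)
Your proof is correct and follows essentially the same route as the paper: both reduce the middle expression to $-b/q_\ell$ and then sandwich $b$ between $\frac{a_k}{(a_k+1)q_{\ell+1}}$ and $\frac{1}{q_{\ell+1}}$. The only differences are cosmetic — the paper obtains the identity $p_\ell-q_\ell\alpha_{\sigma_k}=b$ by citing \eqref{ntherror} with $m=1$, $k=0$ instead of recomputing the minimal polynomial, and it bounds $b$ via $\frac{1}{c}<b<\frac{1}{c-1}$ together with $c=q_{\ell+1}+p_\ell$ rather than by the rearrange-and-square argument you use.
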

\begin{proof}
First we observe that setting $m=1$ and $k=0$ in \eqref{ntherror} yields  $p_\ell - q_\ell \alpha_{\sigma_{k}} = b $, and therefore 
\begin{equation} \label{ltherror}
  \left(\frac{p_\ell}{q_\ell} -\alpha_{\sigma_k}\right)\, -\, \frac{2b}{q_\ell} \,\, =\,\,- \frac{b }{q_\ell} \, \cdot 
\end{equation}
When $\ell$ is even, the definition of $b$  in \eqref{abc}  gives
	\begin{eqnarray*}
b &<& \frac{1}{c-1 } \, =  \, \frac{1}{q_{\ell+1} + p_\ell -1} \, < \, \frac{1}{q_{\ell+1} }
	\end{eqnarray*}		
	and
	\begin{eqnarray*}
 b \, &>&\, \frac{1}{c} \, \,= \,\, \frac{1}{q_{\ell+1} + p_\ell }	\,\,= \, \, \frac{1}{ q_{\ell+1}}\left( 1 + \frac{p_\ell }{q_{\ell+1}}\right)^{-1}\\
&\, >\,& \frac{1}{q_{\ell+1}}\left( 1 + \frac{q_\ell }{q_{\ell+1}}\right)^{-1} \,\, > \,\, \frac{1 }{q_{\ell+1}}\left( 1+ \frac{q_\ell }{a_k q_\ell} \right)^{-1} \,=\, \frac{1}{ q_{\ell+1}}  \,\,\frac{a_k}{a_k +1} \,\, ,
	\end{eqnarray*}	
whence the claim follows. 
\end{proof}
\begin{lem} \label{fractionalpartlowerbound}
	Let $\ell$ be even. For each $t=1,2,\ldots, q_{\ell} -1$  there exists a unique integer $i=i_t\in \{1,2,\ldots, q_\ell -1\}$ such that 
	\begin{equation*}    
	\frac{i}{q_\ell} - \frac{1 }{  q_{\ell+1} }  \, \, \leq \, \,\,  \left\{ \frac{tp_\ell}{ q_\ell} \right\} + t\left(\frac{p_\ell}{ q_\ell} -\alpha_{\sigma_k} \right) -\frac{2bt}{ q_\ell} \, \,\, \leq  \,\, \frac{i}{q_\ell}   - \frac{6}{7 q_\ell q_{\ell+1}}   \,\,  \cdot
	\end{equation*}
Moreover, the correspondence $t \mapsto i_t$ is one-to-one. 
\end{lem}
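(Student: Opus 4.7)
The plan is to decompose the middle expression into a ``discrete'' fractional-part piece $\{tp_\ell/\ql\}$, which always has the form $i/\ql$ for some $i \in \{1,\ldots,\ql-1\}$, and a ``small'' error piece
$$T_t := t\Big(\frac{p_\ell}{\ql}-\alpha_{\sigma_k}\Big)-\frac{2bt}{\ql} = t\cdot\Big[\Big(\frac{p_\ell}{\ql}-\alpha_{\sigma_k}\Big)-\frac{2b}{\ql}\Big],$$
whose size is controlled directly by Lemma \ref{specialvaluelowerbound}.

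First I would use the standard coprimality $\gcd(p_\ell, \ql)=1$ of continued-fraction convergents to conclude that as $t$ runs through $\{1,\ldots,\ql-1\}$, the residues $tp_\ell \bmod \ql$ permute $\{1,\ldots,\ql-1\}$. Defining $i_t \in \{1, \ldots, \ql - 1\}$ by $i_t \equiv tp_\ell \pmod{\ql}$ yields simultaneously the identity $\{tp_\ell/\ql\} = i_t/\ql$ and the required bijection $t \mapsto i_t$.

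Next I would multiply the two-sided bound of Lemma \ref{specialvaluelowerbound} by the positive integer $t$ to get
$$-\frac{t}{\ql\qlplusone} \;<\; T_t \;<\; -\frac{a_k}{a_k+1}\cdot\frac{t}{\ql\qlplusone}.$$
The required upper inequality $\{tp_\ell/\ql\}+T_t \leq i_t/\ql - 6/(7\ql\qlplusone)$ then reduces to $ta_k/(a_k+1) \geq 6/7$; for $t=1$ this is exactly $a_k \geq 6$, the standing hypothesis of this section, and for $t \geq 2$ the inequality is strictly easier. The lower inequality $\{tp_\ell/\ql\}+T_t \geq i_t/\ql - 1/\qlplusone$ reduces to $t \leq \ql$, which is automatic since $t \leq \ql - 1$.

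All of the real work sits inside Lemma \ref{specialvaluelowerbound}, which is already established, so there is no substantive obstacle. The only delicate point is that the numerical constants in the statement ($1/\qlplusone$ on the left, $6/(7\ql\qlplusone)$ on the right) are calibrated precisely so that the inequality for $t=1$ just clears the threshold $a_k \geq 6$; tightening the fraction $6/7$ toward $1$ would force a correspondingly larger lower bound on $a_k$ and thereby weaken the downstream estimates of $\Pi_2$ and $C_k$.
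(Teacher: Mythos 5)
Your proof is correct and takes essentially the same route as the paper's: multiply the two-sided estimate of Lemma~\ref{specialvaluelowerbound} by $t$, use $t\geq 1$ together with the standing assumption $a_k\geq 6$ (so that $a_k/(a_k+1)\geq 6/7$) for the upper inequality and $t\leq q_\ell-1$ for the lower one, and invoke the coprimality of $p_\ell$ and $q_\ell$ to identify $\{tp_\ell/q_\ell\}$ with $i_t/q_\ell$ and obtain the bijection $t\mapsto i_t$. No gaps.
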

\begin{proof}   
In view of the condition $a_k \geq 6$, by Lemma \ref{specialvaluelowerbound} we get 
	$$ -\frac{1}{q_{\ell+1}} \, <\, t\left(\frac{p_\ell}{ q_\ell} -\alpha_{\sigma_k} \right) -\frac{2bt}{q_\ell}  <\, - \frac{6 }{ 7q_\ell q_{\ell+1}} , \quad t=1,2,\ldots, q_\ell-1. $$
	Also since $(p_{\ell}, q_{\ell} ) = 1$, for each $t=1,2,\ldots, q_\ell-1$ there exists a unique integer $1\leq i < q_\ell$ such that $\left\{{t p_\ell / q_\ell} \right\} = {i/ q_\ell} ,$ and the correspondence $t\mapsto i$ is one-to-one. The result follows. 
\end{proof}
Lemma \ref{fractionalpartlowerbound} gives the following estimates for the values of $\|R_t\|, \, t=1,2,\ldots, q_\ell-1.$ 
\begin{cor} \label{corollary}
	Let $\ell$ be even  and $R_t $ be as in \eqref{Rtdefinition}. Then for every $t=1,2,\ldots, q_\ell-1$ there exists an integer $j_t \in \left\{  1,2,\ldots, \lfloor\tfrac12 q_\ell\rfloor  \right\} $ such that 
\begin{equation} \label{rtdistance}
\|R_t\| \, \geq \, \frac{j_t}{ q_\ell}  - \frac{1 }{q_{\ell+1}}  \quad \text{ or } \quad \|R_t\| \, \geq \, \frac{j_t}{ q_\ell}  +\frac{6 }{7 q_\ell q_{\ell+1}} \, \cdot
\end{equation}
The first inequality in \eqref{rtdistance} holds when the residue of $tp_\ell(\alpha_{\tau_k})$ modulo $q_\ell(\alpha_{\sigma_k})$ is one of $1,2,\ldots, \lfloor \tfrac12 q_\ell(\alpha_{\tau_k} ) \rfloor$ while the second holds when the residue of $tp_\ell(\alpha_{\tau_k})$ modulo $q_\ell(\alpha_{\sigma_k})$ is one of  $\lfloor \tfrac12 q_\ell(\alpha_{\tau_k}) \rfloor+1, \ldots, q_\ell(\alpha_{\tau_k}) -1 .$ Moreover, there exists at most one integer $t$ such that $j_t=\lfloor \tfrac12 q_\ell(\alpha_{\tau_k} ) \rfloor$, and  the correspondence $t\mapsto j_t$ is ``at most two-to-one". 
\end{cor}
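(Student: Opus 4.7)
The strategy is to convert the two-sided sandwich bound for $R_t$ supplied by Lemma \ref{fractionalpartlowerbound} into a lower bound on $\|R_t\|$ by identifying the nearest integer to $R_t$, and then to define $j_t$ accordingly while reading off the multiplicity properties from the bijection $t \mapsto i_t$ already provided by that lemma.

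First I would observe that under the standing hypothesis $a_k \geq 6$, both correction terms $1/q_{\ell+1}$ and $6/(7 q_\ell q_{\ell+1})$ appearing in Lemma \ref{fractionalpartlowerbound} are much smaller than $1/(2q_\ell)$, so $R_t$ lies in a tiny neighborhood of $i_t/q_\ell \in (0,1)$, where $i_t$ is the residue of $t p_\ell(\alpha_{\tau_k})$ modulo $q_\ell(\alpha_{\sigma_k})$ produced by that lemma. Consequently $R_t \in (0,1)$, and the nearest integer to $R_t$ is $0$ when $i_t/q_\ell < 1/2$ and $1$ when $i_t/q_\ell > 1/2$. This is precisely the case split stated in the corollary.

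Next I would handle each case directly. In Case 1, with $1 \leq i_t \leq \lfloor q_\ell/2 \rfloor$, one has $\|R_t\| = R_t$, and the left inequality of Lemma \ref{fractionalpartlowerbound} gives
\[\|R_t\| \; \geq \; \frac{i_t}{q_\ell} - \frac{1}{q_{\ell+1}},\]
so setting $j_t := i_t \in \{1,\ldots,\lfloor q_\ell/2 \rfloor\}$ yields the first inequality in \eqref{rtdistance}. In Case 2, with $\lfloor q_\ell/2 \rfloor + 1 \leq i_t \leq q_\ell - 1$, one has $\|R_t\| = 1 - R_t$, and the right inequality gives
\[\|R_t\| \; \geq \; \frac{q_\ell - i_t}{q_\ell} + \frac{6}{7 q_\ell q_{\ell+1}},\]
so setting $j_t := q_\ell - i_t$ yields the second bound. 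A short check shows that in Case 2 the value $j_t$ lies in $\{1,\ldots,\lceil q_\ell/2 \rceil - 1\} \subseteq \{1,\ldots,\lfloor q_\ell/2 \rfloor\}$, as required.

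For the multiplicity assertions I would invoke the bijection $t \mapsto i_t$ from Lemma \ref{fractionalpartlowerbound}: a given target $j \in \{1,\ldots,\lfloor q_\ell/2 \rfloor\}$ is attained as $j_t$ only from $i_t = j$ (via Case 1) or $i_t = q_\ell - j$ (via Case 2), whence $t \mapsto j_t$ is at most two-to-one. The extreme value $j_t = \lfloor q_\ell/2 \rfloor$ must be inspected separately: when $q_\ell$ is even, the two candidate residues $j$ and $q_\ell - j$ coincide at $q_\ell/2$, which falls exclusively into Case 1, leaving at most one preimage. The delicate point of the argument is exactly this boundary analysis — one must verify that the correction terms from Lemma \ref{fractionalpartlowerbound} cannot tip $R_t$ across the midpoint $1/2$, and this is precisely where the standing assumption $a_k \geq 6$ is consumed; aside from this, the proof is a direct bookkeeping exercise on top of Lemma \ref{fractionalpartlowerbound}.
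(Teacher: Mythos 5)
Your derivation of the two inequalities in \eqref{rtdistance} and of the ``at most two-to-one'' property is correct and follows the intended route: the paper offers no separate proof, treating the corollary as immediate from Lemma \ref{fractionalpartlowerbound}, and your case split on whether the residue $i_t$ lies in $\{1,\ldots,\lfloor \tfrac12 q_\ell\rfloor\}$ or in $\{\lfloor \tfrac12 q_\ell\rfloor+1,\ldots,q_\ell-1\}$, with $j_t=i_t$ in the first case and $j_t=q_\ell-i_t$ in the second, is the right bookkeeping. Your observation that the correction terms cannot push $R_t$ across $0$ or $\tfrac12$ (so that $\|R_t\|$ equals $R_t$, respectively $1-R_t$) is indeed the only place where the size of $q_{\ell+1}$ relative to $q_\ell$ is needed.

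There is, however, a genuine gap in the clause ``there exists at most one integer $t$ such that $j_t=\lfloor \tfrac12 q_\ell\rfloor$'': you verify it only for even $q_\ell$. For odd $q_\ell=2Q+1$ your own construction produces exactly \emph{two} such $t$: since $t\mapsto i_t$ is a bijection onto $\{1,\ldots,2Q\}$, both the residue $i_t=Q$ (Case 1, giving $j_t=Q$) and the residue $i_t=Q+1$ (Case 2, giving $j_t=q_\ell-(Q+1)=Q$) occur. A counting argument confirms this is unavoidable: an at most two-to-one map from $q_\ell-1=2Q$ values of $t$ into $\{1,\ldots,Q\}$ with at most one preimage of $Q$ could cover at most $2Q-1$ values of $t$. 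So this clause of the corollary, as literally stated, fails for odd $q_\ell\geq 3$ and cannot be proved; your write-up silently restricts to the even case without flagging this. To obtain a statement that actually supports the product estimate in Lemma \ref{lem:pi2two}, you should either record the correct multiplicity for odd $q_\ell$ (the value $j=Q$ is then attained twice, once with each of the two lower bounds in \eqref{rtdistance}, which changes the constant in that lemma) or explicitly restrict the ``at most one preimage of $\lfloor \tfrac12 q_\ell\rfloor$'' assertion to even $q_\ell$.
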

With Corollary \ref{corollary} established, we are equipped to complete the proof of Lemma \ref{lem:pi2two}.
\begin{proof}[Proof of Lemma \ref{lem:pi2two}]
We consider first the case $q_\ell \geq 4$ and for abbreviation set $Q=\lfloor \frac12 q_\ell \rfloor$. From Corollary \ref{corollary} it follows that 
\begin{eqnarray}
\prod_{t=1}^{q_\ell -1}  \|R_t\|  & \geq &  \left(\frac12 - \frac{1}{q_{\ell+1}}  \right)\prod_{j=1}^{Q-1}\!\!  \left( \frac{j}{q_\ell}  - \frac{1 }{q_{\ell+1}} \right)    \left( \frac{j }{q_\ell} + \frac{6 }{ 7 q_\ell q_{\ell+1}} \right)     \nonumber \\
&\geq & \left(\frac{1}{ 2} - \frac16 \right) \frac{1}{q_{\ell}}\left( 1-\frac{ q_{\ell}}{ q_{\ell+1}}\right)  \prod_{j=2}^{Q-1}\left( \frac{j}{q_{\ell}} -\frac{1}{q_{\ell+1}}\right) \prod_{j=1}^{Q-1} \frac{j}{q_{\ell}} \nonumber  \\
&\geq &  \frac{5}{18q_{\ell}} \prod_{j=1}^{Q-2}\left( \frac{j}{q_{\ell}} + \frac{1}{2 q_{\ell}} \right) \prod_{j=1}^{Q-1} \frac{j}{q_{\ell}}  \qquad \left(\text{because }  1-\frac{ q_{\ell}}{ q_{\ell+1}} > \frac56  \right).    \nonumber   
\end{eqnarray}
We may therefore continue to find
\begin{eqnarray*}
\prod_{t=1}^{q_\ell -1}  \|R_t\|  &\geq& \frac{5}{ 18}\frac{(2Q-3)!!}{(2q_{\ell})^{Q-1}}\frac{(Q-1)!}{(q_{\ell})^{Q-1}} = \frac{5}{18} \frac{(2Q-2)!}{ (2q_{\ell})^{2(Q-1)}} \\
& \stackrel{\eqref{factorials}}  \geq& \frac{5}{18} \sqrt{ 2\pi \cdot 2(Q-1)} \left(\frac{Q-1}{q_{\ell}e}\right)^{2(Q-1)} \\
&  \geq &    \frac{5}{18} \sqrt{2\pi (q_{\ell}-3)} \left(\frac{q_{\ell}-3}{2q_{\ell}e}\right)^{q_{\ell}- 2 } \\
&\geq &  \frac{5}{18} \sqrt{2\pi(q_{\ell}-3)} \left(\frac{1}{2e}\right)^{q_{\ell}-2} \left(1-\frac{3}{q_{\ell}}\right)^{q_{\ell}-3}   \left(  1 - \frac{3}{ q_\ell} \right) \\
&\geq& \frac{5}{18}\,  \frac{ \sqrt{2\pi q_\ell} }{(2e)^{q_\ell+1}  } \cdot 
\end{eqnarray*}	
In the last step we have used the bound $\tfrac{q_\ell-3}{q_\ell} \geq \tfrac14$ as well as the inequality $\left(1+\frac{r}{n}\right)^{n}<e^{r}$ for $n>r$, which implies that $  \left(1-\frac{r}{n}\right)^{n-r}=\left( \frac{n}{n-r}\right)^{-(n-r)}   > e^{-r}$  whenever $n>r$. \par 
We have now shown that \eqref{eq:boundRt} holds for $q_\ell \geq 4$, and proceed by considering smaller values of $q_\ell$. When $q_\ell=1$, the product in question is empty and hence by definition equals $1$. When $q_\ell=2$, the product consists only of the factor $\|R_1\|$; by \eqref{rtdistance} and the assumption $a_k\geq 6$ we have $\|R_1\| \geq 1/3,$ so \eqref{eq:boundRt} is still valid. Finally, when $q_\ell=3$  again \eqref{rtdistance} implies $\|R_1\|\|R_2\|\geq 1/12$ which proves that \eqref{eq:boundRt} is true also in this case. This completes the proof of Lemma \ref{lem:pi2two}.
\end{proof}

We now turn to Lemma \ref{lem:pi2odd1}. We will not write out the proof of this result in full detail, as it is very similar to that of Lemma \ref{lem:pi2two}. We simply point out that it is a consequence of the following results, analogous to Lemmas \ref{specialvaluelowerbound}--\ref{fractionalpartlowerbound} and Corollary \ref{corollary} above.
 \begin{lem}
When $\ell$ is odd, we have 
\begin{equation} \label{64}
 \frac{a_k }{a_k +1}  \frac{1 }{  q_\ell q_{\ell+1}}\, <\, \left( \frac{p_\ell}{ q_\ell} -\alpha_{\sigma_k}  \right) - \frac{2b }{ q_\ell}\, <\,  \frac{1 }{  q_\ell q_{\ell+1}}\, \cdot 
\end{equation}
\end{lem}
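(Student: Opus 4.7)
The plan is to proceed in parallel with the proof of Lemma \ref{specialvaluelowerbound}, tracking how the sign change in $b$ propagates through the argument. The starting point is unchanged: applying \eqref{ntherror} with $m=1$ and $k=0$ to $\alpha_{\sigma_k}$, and using $e_0 = -1$ (which follows from $q_0=0$ in the definition \eqref{ekck}), I obtain $p_\ell(\alpha_{\sigma_k}) - q_\ell(\alpha_{\sigma_k})\alpha_{\sigma_k}=b,$ so that
\[ \left(\frac{p_\ell}{q_\ell}-\alpha_{\sigma_k}\right)-\frac{2b}{q_\ell}=-\frac{b}{q_\ell} \]
regardless of the parity of $\ell$. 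The decisive difference is that for odd $\ell$, formula \eqref{abc} gives $(-1)^{\ell-1}=+1$, hence $b<0$; thus the quantity in \eqref{64} equals $|b|/q_\ell>0$, explaining why the bounds become positive. The problem therefore reduces to establishing
\[ \frac{a_k}{a_k+1}\cdot\frac{1}{q_{\ell+1}}\;<\;|b|\;<\;\frac{1}{q_{\ell+1}}, \]
with all convergents understood as those of $\alpha_{\sigma_k}$, where in particular $c=q_{\ell+1}+p_\ell$.

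For the odd-$\ell$ case I would exploit the identity $ab=(-1)^\ell=-1$, which gives the convenient reformulation $|b|=1/a$ with $a=(c+\sqrt{c^2+4})/2$. The upper bound is then immediate from $\sqrt{c^2+4}>c$, which yields $a>c$ and hence $|b|<1/c\leq 1/q_{\ell+1}$. For the lower bound I would apply the elementary estimate $\sqrt{c^2+4}<c+2/c$ (verified by squaring both sides), giving $a<c+1/c$ and therefore $|b|>c/(c^2+1)$.

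It then remains to show $c/(c^2+1)>a_k/((a_k+1)q_{\ell+1})$. Cross-multiplying and substituting $c=q_{\ell+1}+p_\ell$, this simplifies after a short algebraic manipulation to
\[ c\bigl(q_{\ell+1}-a_k p_\ell\bigr)>a_k. \]
To verify this I would note that $a_k$ is the $\ell$-th partial quotient of $\alpha_{\sigma_k}$ (a direct check from the definition $\alpha_{\sigma_k}=[0;\overline{a_{k-1},\ldots,a_1,a_\ell,\ldots,a_k}]$), so the recursion \eqref{qnrel} gives $q_{\ell+1}=a_k q_\ell+q_{\ell-1}\geq a_k q_\ell+1$. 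Combined with $q_\ell\geq p_\ell+1$ (valid since $p_\ell<q_\ell$ are integers), this produces $q_{\ell+1}-a_k p_\ell\geq a_k+1$, and $c\geq 1$ finishes the argument.

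The main obstacle, rather than any single estimate, is bookkeeping: one must consistently interpret $p_\ell$, $q_\ell$, $q_{\ell+1}$ as quantities attached to $\alpha_{\sigma_k}$ throughout, and verify that each inequality used in the even case admits a sign-flipped analogue. The invariance $c(\alpha)=c(\alpha_{\sigma_k})=c(\alpha_{\tau_k})$ from \eqref{qlplusone} makes the relation $c=q_{\ell+1}+p_\ell$ available in the required form, so the translation from the even case goes through cleanly once $|b|<1/c$ (rather than $b>1/c$) is recognized as the correct substitute, and compensated for by the sharper lower bound $|b|>c/(c^2+1)$.
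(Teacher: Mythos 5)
Your proof is correct and follows essentially the route the paper intends: the paper omits the proof of this odd-$\ell$ lemma, deferring to the even-case template of Lemma \ref{specialvaluelowerbound}, and your argument is precisely that adaptation --- the identity \eqref{ltherror} giving $-b/q_\ell$, the sign flip $b<0$ for odd $\ell$, the two-sided bound $c/(c^2+1)<|b|<1/c$ in place of $1/c<b<1/(c-1)$, and the reduction of the lower bound to $c(q_{\ell+1}-a_kp_\ell)>a_k$, which holds since $q_{\ell+1}=a_kq_\ell+q_{\ell-1}$ and $q_\ell>p_\ell$. One cosmetic slip: the recursion $q_{\ell+1}=a_kq_\ell+q_{\ell-1}$ comes from the basic continued-fraction recurrence of Section \ref{subsec:contfracprelim} applied to $\alpha_{\sigma_k}$ (whose $\ell$-th partial quotient is indeed $a_k$), not from \eqref{qnrel}.
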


\begin{lem} \label{fractionalpartlowerboundodd}
	Let $\ell$ be odd. For each $t=1,2,\ldots, q_{\ell} -1$  there exists a unique integer $i=i_t\in \{1,2,\ldots, q_\ell -1\}$ such that 
	\begin{equation*} 	   
	\frac{i}{q_\ell} - \frac{1 }{ q_{\ell+1} }  \, \, \leq \, \,\, \| R_t +b\| \, \,\, \leq  \,\,   \frac{i }{q_\ell}  + \frac{1}{7q_{\ell+1}}   \,\,  \cdot  
	\end{equation*}
	Moreover, the correspondence $t \mapsto i_t$ is one-to-one. 
\end{lem}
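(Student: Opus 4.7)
The plan is to mirror the even-period argument (Lemma \ref{fractionalpartlowerbound} and Corollary \ref{corollary}), but with the sign of $b$ reversed and with the shifted quantity $R_t+b$, rather than $R_t$, playing the central role. The lemma is really two facts bundled together: a per-$t$ location statement for $\|R_t+b\|$ and the injectivity of $t\mapsto i_t$; I would prove them in that order.

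First I would establish the auxiliary inequalities \eqref{64}, the odd-$\ell$ counterpart of Lemma \ref{specialvaluelowerbound}. The key identity $p_\ell(\alpha_{\sigma_k})-q_\ell(\alpha_{\sigma_k})\alpha_{\sigma_k}=b$, applied as in \eqref{ltherror}, gives
\[
\left(\frac{p_\ell}{q_\ell}-\alpha_{\sigma_k}\right)-\frac{2b}{q_\ell}\;=\;-\frac{b}{q_\ell}.
\]
For odd $\ell$ the numbers $a,b$ in \eqref{abc} satisfy $ab=-1$, so $-b=1/a$. Using $a<c+1/2$ (an immediate consequence of $\sqrt{c^2+4}<c+1$) together with $c=q_{\ell+1}+p_\ell\leq q_{\ell+1}(1+1/a_k)=q_{\ell+1}(a_k+1)/a_k$, one obtains
\[
\frac{a_k}{(a_k+1)\,q_{\ell+1}}\;<\;-b\;=\;\frac{1}{a}\;<\;\frac{1}{q_{\ell+1}},
\]
and dividing by $q_\ell$ yields \eqref{64}.

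Next, I would substitute this into \eqref{Rtdefinition} to obtain the clean formula
\[
R_t+b\;=\;\left\{\frac{tp_\ell(\alpha_{\tau_k})}{q_\ell}\right\}-\frac{tb}{q_\ell}+b\;=\;\frac{r_t}{q_\ell}-\eta_t,\qquad \eta_t:=\frac{(-b)(q_\ell-t)}{q_\ell}>0,
\]
where $r_t\in\{1,\ldots,q_\ell-1\}$ is the residue of $tp_\ell(\alpha_{\tau_k})$ modulo $q_\ell$. Because $\gcd(p_\ell(\alpha_{\tau_k}),q_\ell)=1$, the map $t\mapsto r_t$ is a bijection of $\{1,\ldots,q_\ell-1\}$. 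The bounds from \eqref{64} give
\[
\frac{a_k(q_\ell-t)}{(a_k+1)q_\ell q_{\ell+1}}\;\leq\;\eta_t\;\leq\;\frac{q_\ell-t}{q_\ell q_{\ell+1}}\;<\;\frac{1}{q_{\ell+1}}\;<\;\frac{1}{q_\ell},
\]
so in particular $R_t+b\in\bigl((r_t-1)/q_\ell,\;r_t/q_\ell\bigr)\subset(0,1)$ and $r_t/q_\ell$ is not an integer.

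To recover the norm estimates, I would split into cases according to the side of $1/2$ on which $R_t+b$ lies. When $r_t\leq \lfloor q_\ell/2\rfloor$ one has $R_t+b<1/2$, hence $\|R_t+b\|=r_t/q_\ell-\eta_t$; setting $i_t=r_t$ gives
\[
\frac{i_t}{q_\ell}-\frac{1}{q_{\ell+1}}\;\leq\;\|R_t+b\|\;\leq\;\frac{i_t}{q_\ell},
\]
which sits comfortably inside the stated range. When $r_t>\lfloor q_\ell/2\rfloor$ one has $R_t+b>1/2$, whence $\|R_t+b\|=(q_\ell-r_t)/q_\ell+\eta_t$; setting $i_t=q_\ell-r_t$ one reads off $\|R_t+b\|=i_t/q_\ell+\eta_t$, and the bounds on $\eta_t$ above translate into the claimed two-sided inequality (the lower bound is trivial; the upper bound is where the assumption $a_k\geq 6$, giving $a_k/(a_k+1)\geq 6/7$, combines with $(q_\ell-t)/q_\ell$ being small in the relevant range to squeeze $\eta_t$ below $1/(7q_{\ell+1})$). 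Injectivity of $t\mapsto i_t$ then follows from the bijectivity of $t\mapsto r_t$ together with the observation that the two cases land on disjoint sides of $1/2$, so no collision between a case-one and a case-two value of $i_t$ can occur.

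The hard part is the second case: there the small quantity $\eta_t$ is added to $i_t/q_\ell$ rather than subtracted, and capturing the sharp constant $1/(7q_{\ell+1})$ requires a joint use of the lower bound in \eqref{64} (which gives a coefficient $6/7$ under the assumption $a_k\geq 6$) and a careful check that $r_t>\lfloor q_\ell/2\rfloor$ forces the companion $t$ to lie in the regime where $(q_\ell-t)/q_\ell$ is correspondingly controlled. Ensuring that the pairing $t\mapsto i_t$ is genuinely one-to-one across the two cases, rather than only two-to-one as in Corollary \ref{corollary}, is the other delicate point; this is where the sign change from the even case makes a real difference, and is what allows Lemma \ref{lem:pi2odd1} to be derived in the same spirit as Lemma \ref{lem:pi2two}.
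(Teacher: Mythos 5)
Your derivation of \eqref{64} and the reduction $R_t+b=\tfrac{r_t}{q_\ell}-\eta_t$ with $\eta_t=\tfrac{(-b)(q_\ell-t)}{q_\ell}\in\bigl(\tfrac{6}{7q_\ell q_{\ell+1}},\tfrac{1}{q_{\ell+1}}\bigr)$ is correct and is exactly the substance of the intended argument (the paper only sketches it, referring to the even-period case). The gap is in your final case analysis, where you try to read $\|\cdot\|$ literally and fold around $1/2$. In the case $r_t>\lfloor q_\ell/2\rfloor$ you need $\eta_t\leq\tfrac{1}{7q_{\ell+1}}$, and you defer this to ``a careful check that $r_t>\lfloor q_\ell/2\rfloor$ forces $(q_\ell-t)/q_\ell$ to be correspondingly controlled.'' No such implication holds: $r_t=tp_\ell(\alpha_{\tau_k})\bmod q_\ell$ and $t$ are related by multiplication by a unit mod $q_\ell$, so $r_t$ can be large while $t$ is small (e.g.\ if $p_\ell(\alpha_{\tau_k})\equiv -1$, then $r_t>q_\ell/2$ exactly when $t<q_\ell/2$, the opposite of what you need). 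For such $t$ one has $\eta_t$ close to $\tfrac{1}{q_{\ell+1}}$, and no admissible $i$ satisfies your folded upper bound. Your injectivity argument also fails after folding: a case-one $t$ with $r_t=j$ and a case-two $t'$ with $r_{t'}=q_\ell-j$ both receive $i=j$, so the folded map is only two-to-one. Indeed the paper's Corollary \ref{corollary2}, which performs precisely this folding, explicitly records that the resulting correspondence $t\mapsto j_t$ is only ``at most two-to-one.''

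The resolution is that no folding should happen inside this lemma. As the even-period analogue (Lemma \ref{fractionalpartlowerbound}) and the way Corollary \ref{corollary2} consumes the present lemma make clear, the quantity being located is $R_t+b$ itself, which your own computation places in $(0,1)$; the $\|\cdot\|$ in the statement is loose notation for its fractional part. Taking $i_t=r_t$ for \emph{every} $t$, both stated inequalities follow at once from $R_t+b=\tfrac{r_t}{q_\ell}-\eta_t$ with $0<\eta_t<\tfrac{1}{q_{\ell+1}}$ (the upper bound $\tfrac{i_t}{q_\ell}+\tfrac{1}{7q_{\ell+1}}$ is then satisfied with room to spare), and injectivity of $t\mapsto i_t$ is immediate from $\gcd(p_\ell(\alpha_{\tau_k}),q_\ell)=1$. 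The passage to $\|R_t+b\|$, with the attendant loss of injectivity and the role of the constant $\tfrac{1}{7q_{\ell+1}}$ as a loss term in the lower bound for $1-(R_t+b)$, belongs to Corollary \ref{corollary2}, not here.
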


\begin{cor} \label{corollary2}
	Let $\ell$ be odd and $R_t$ be as in \eqref{Rtdefinition}. Then for every $t=1,2,\ldots, q_\ell-1$ there exists an integer $j_t \in \left\{  1,2,\ldots, \lfloor\tfrac12 q_\ell\rfloor \right\} $ such that 
	\begin{equation} \label{Stdistance}
	\|R_t+b\| \, \geq \, \frac{j_t}{q_\ell}  - \frac{1 }{ q_{\ell+1}} \quad \text{ or } \quad 	\|R_t+b\| \, \geq \, \frac{j_t}{ q_\ell}  - \frac{1}{7 q_{\ell+1}} \,\, \cdot	
\end{equation}
 The first inequality in \eqref{Stdistance} holds when the residue of $tp_\ell(\alpha_{\tau_k})$ modulo $q_\ell(\alpha_{\sigma_k})$ is one of $1,2,\ldots, \lfloor \tfrac12 q_\ell(\alpha_{\tau_k} ) \rfloor$ while the second holds when the residue of $tp_\ell(\alpha_{\tau_k})$ modulo $q_\ell(\alpha_{\sigma_k})$ is one of  $\lfloor \tfrac12 q_\ell(\alpha_{\tau_k}) \rfloor+1, \ldots, q_\ell(\alpha_{\tau_k}) -1 .$ Moreover, there exists at most one integer $t$ such that $j_t=\lfloor \tfrac12 q_\ell(\alpha_{\tau_k} ) \rfloor$, and  the correspondence $t\mapsto j_t$ is ``at most two-to-one".
\end{cor}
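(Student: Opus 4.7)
I would derive Corollary \ref{corollary2} from Lemma \ref{fractionalpartlowerboundodd} by exactly the same argument that passes from Lemma \ref{fractionalpartlowerbound} to Corollary \ref{corollary} in the even-period case. Reading Lemma \ref{fractionalpartlowerboundodd} in its natural form, it places $R_t + b$ (modulo $1$) in the short interval $[i_t/q_\ell - 1/q_{\ell+1},\, i_t/q_\ell + 1/(7q_{\ell+1})]$ for a unique $i_t \in \{1, \ldots, q_\ell - 1\}$, and $t \mapsto i_t$ is a bijection. Since $\{t\, p_\ell(\alpha_{\tau_k})/q_\ell(\alpha_{\tau_k})\} = i_t/q_\ell$ and $p_\ell(\alpha_{\tau_k})$ is coprime to $q_\ell(\alpha_{\tau_k}) = q_\ell(\alpha_{\sigma_k})$, the index $i_t$ is precisely the residue of $t\, p_\ell(\alpha_{\tau_k})$ modulo $q_\ell(\alpha_{\sigma_k})$, matching the descriptor in the corollary.

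To pass from this two-sided bound to the stated lower bound on $\|R_t + b\|$, I would set $j_t = \min(i_t,\, q_\ell - i_t) \in \{1, \ldots, \lfloor q_\ell/2 \rfloor\}$ and split the argument into two cases. If $i_t \leq \lfloor q_\ell/2 \rfloor$, the interval containing $R_t + b$ lies entirely below $1/2$; this is where the inequality $1/(7q_{\ell+1}) < 1/(2q_\ell)$ is used, and it follows from $q_{\ell+1} \geq a_k q_\ell \geq 6 q_\ell$, which the recurrence for $q_n(\alpha_{\sigma_k})$ delivers since the $\ell$-th continued fraction coefficient of $\alpha_{\sigma_k}$ is $a_k$. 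The nearest integer to $R_t + b$ is therefore $0$, and
\[ \|R_t + b\| \geq \frac{i_t}{q_\ell} - \frac{1}{q_{\ell+1}} = \frac{j_t}{q_\ell} - \frac{1}{q_{\ell+1}} . \]
If instead $i_t > \lfloor q_\ell/2 \rfloor$, the same interval sits above $1/2$, the nearest integer is $1$, and reflecting about $1/2$ swaps the roles of the two endpoints, yielding
\[ \|R_t + b\| \geq \frac{q_\ell - i_t}{q_\ell} - \frac{1}{7q_{\ell+1}} = \frac{j_t}{q_\ell} - \frac{1}{7q_{\ell+1}} . \]

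For the multiplicity claim I would note that $t \mapsto j_t$ is at most two-to-one because $i$ and $q_\ell - i$ produce the same $j$, while $j = \lfloor q_\ell/2 \rfloor$ has at most one preimage: it arises from $i = q_\ell/2$ if $q_\ell$ is even, or from $i = \lfloor q_\ell/2 \rfloor$ if $q_\ell$ is odd (in the latter case $q_\ell - \lfloor q_\ell/2 \rfloor = \lceil q_\ell/2 \rceil$ collapses to a strictly smaller value of $j$). The only genuinely delicate point is the boundary case $i_t = \lfloor q_\ell/2 \rfloor$ with $q_\ell$ even, in which the enclosing interval straddles $1/2$; here a direct inspection of the two-sided bound still gives $\|R_t + b\| \geq 1/2 - 1/q_{\ell+1}$, which is exactly the weaker of the two displayed inequalities with $j_t = q_\ell/2$. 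Apart from this mild boundary bookkeeping I expect no real obstacle: the argument is a mechanical case-split that mirrors the proof of Corollary \ref{corollary}, with all of the analytic content concentrated in Lemmas \ref{specialvaluelowerbound}--\ref{fractionalpartlowerboundodd}.
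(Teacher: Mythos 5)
Your derivation of the two inequalities in \eqref{Stdistance} is precisely the route the paper intends: the paper does not write this proof out, remarking only that Corollary \ref{corollary2} follows from Lemma \ref{fractionalpartlowerboundodd} by the argument used for Corollary \ref{corollary} in the even case. Your reading of Lemma \ref{fractionalpartlowerboundodd} as locating $R_t+b$ modulo $1$ in the interval $[i_t/q_\ell - 1/q_{\ell+1},\, i_t/q_\ell + 1/(7q_{\ell+1})]$ is the correct (and necessary) one, the identification of $i_t$ with the residue of $tp_\ell(\alpha_{\tau_k})$ modulo $q_\ell(\alpha_{\sigma_k})$ is right, and the case split on whether $i_t\leq \lfloor q_\ell/2\rfloor$, including the boundary case $i_t=q_\ell/2$ for even $q_\ell$, is handled correctly.

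The one genuine flaw is in your justification of the final multiplicity claim. For odd $q_\ell$ you assert that the candidate preimage $i=\lceil q_\ell/2\rceil=(q_\ell+1)/2$ of $j=\lfloor q_\ell/2\rfloor$ ``collapses to a strictly smaller value of $j$''; it does not, since $\min\bigl((q_\ell+1)/2,\ q_\ell-(q_\ell+1)/2\bigr)=(q_\ell-1)/2=\lfloor q_\ell/2\rfloor$. With your definition $j_t=\min(i_t,q_\ell-i_t)$, the value $j=\lfloor q_\ell/2\rfloor$ therefore has the two preimages $i=(q_\ell\pm1)/2$, and because $t\mapsto i_t$ is a bijection onto $\{1,\dots,q_\ell-1\}$ there are \emph{exactly two} integers $t$ with $j_t=\lfloor q_\ell/2\rfloor$ whenever $q_\ell\geq 3$ is odd. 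Your argument establishes ``at most one such $t$'' only for even $q_\ell$, and this is not repairable by a different choice of $j_t$: the assertion as stated is false for odd $q_\ell$ (a defect the corollary shares with the paper's own Corollary \ref{corollary}). In any write-up you should either restrict the ``at most one'' claim to even $q_\ell$, or account for the second factor of size roughly $\lfloor q_\ell/2\rfloor/q_\ell - 1/(7q_{\ell+1})$ in the product estimate of Lemma \ref{lem:pi2odd1} where the claim is used.
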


\subsubsection{Proofs of Lemmas \ref{lem:pi2one} and \ref{lem:pi2odd2}} Now let us treat Lemmas \ref{lem:pi2one} and \ref{lem:pi2odd2}. Recall that these provide lower bounds on the factors $\Pi_2$ and $\Pi_2'$, given in \eqref{product} and \eqref{eq:pi23marked}. As in the previous subsection, we will provide a full proof of Lemma \ref{lem:pi2one}, and then simply sketch the proof of Lemma \ref{lem:pi2odd2}.
\begin{proof}[Proof of Lemma \ref{lem:pi2one}]
Recall from \eqref{product} that $\Pi_2$ is defined as 
\begin{equation*}
    \Pi_2 = \prod_{s= \lfloor \frac{a_k}{2} \rfloor}^{c-1} \text{dist} \left( 2s+1, \cu \right), \qquad u_k(t) = \frac{2t}{|e_kc_k|}-2\{t\alpha_{\sigma_k}\}+1.
\end{equation*}
For each $s \geq 0$ let $t_s\geq 1$ be the unique positive integer such that 
\begin{equation}\label{eq:ts}
\mathrm{dist}(2s+1,\mathcal{U}) = |2s + 1 - u_k(t_s)|. 
\end{equation}
Then 
\begin{eqnarray} \label{prod2}
\Pi_2\hspace{-1mm} &=&\hspace{-5mm}  \prod_{s= \lfloor \frac{a_k}{2} \rfloor+1}^{c-1}\!\!|2s + 1 - u_k(t_s)| \, =\,  2^{c- \lfloor \frac{a_k}{2}\rfloor-1 }\, \prod_{t=1}^{\infty} \mathop{\prod_{s= \lfloor \frac{a_k}{2} \rfloor+1}^{c-1}}_{t_s =t}\!\!\tfrac12 | u_k(t)- (2s + 1)| \, \,  .
\end{eqnarray}

We now analyze the factors appearing in \eqref{prod2}. For any $t\geq 1$ and $s=0,1,\ldots, c-1$,
\begin{eqnarray} 
\frac{1}{2}\left(u_k(t) -(2s+1)\right) &=&   \frac{t}{\ckek} - \{t\alpha_{\sigma_k}\} -s  \nonumber  \\
& \stackrel{\eqref{ckekformula} }{= } &   ta_k + t\left( \frac{p_{\ell}(\alpha_{\sigma_k})}{q_{\ell}(\alpha_{\sigma_k})} + \frac{p_{\ell}(\alpha_{\tau_k})}{q_{\ell}(\alpha_{\tau_k})} - \frac{2b}{q_{\ell}(\alpha_{\tau_k})} \right) - \{t\alpha_{\sigma_k}\} -s  \nonumber  \\
&=& \underbrace{ ta_k + \left\lfloor \frac{t p_{\ell}(\alpha_{\tau_k})}{q_{\ell}(\alpha_{\tau_k})} \right\rfloor + \lfloor t\alpha_{\sigma_k}\rfloor  -s}_\text{$\in \mathbb{Z} $ }\,\, +\,\,  R_t,  \label{explicitdistance} 
\end{eqnarray}
where the terms $R_t$ are as in \eqref{Rtdefinition}, with this definition extended to all values $t\geq 1.$ 
When $t=q_\ell $ and $s=c-1$ the right hand side of \eqref{explicitdistance} is equal to 
\begin{multline*} 
a_kq_\ell + p_\ell + \lfloor q_\ell \alpha_{\sigma_k} \rfloor - (c-1) +q_\ell\left(\frac{p_\ell}{q_\ell} -\alpha_{\sigma_k}\right) -2b  = \\
= a_k q_\ell + q_{\ell-1} + \left\lfloor q_\ell \left(\alpha_{\sigma_k}-\frac{p_\ell}{q_\ell }\right) + p_\ell \right\rfloor - (c-1) +q_\ell\left(\frac{p_\ell}{q_\ell} -\alpha_{\sigma_k}\right) -2b =\\
=   q_\ell\left(\frac{p_\ell}{q_\ell} -\alpha_{\sigma_k}\right) -2b  \, \stackrel{\eqref{ltherror}}{= }  -b  \in \left( -\frac{1}{q_{\ell+1}} , 0 \right) . 
\end{multline*}
This observation implies that $t_{c-1}=q_\ell$.
Since $t_s$ is increasing in $s$, $t=q_{\ell}$ is the maximum value of $t$ for which the product in \eqref{prod2} is non-empty, and we may write
\begin{equation} \label{prod3}
\Pi_2 = 2^{c- \lfloor \frac{a_k}{2}\rfloor -1 }\,  \prod_{t=1}^{q_\ell}  \mathop{\prod_{s= \lfloor \frac{a_k}{2} \rfloor+1}^{c-1}}_{t_s =t}    \tfrac12 | u_k(t)- (2s+1)|.
\end{equation}
We now want to find lower bounds for the factors in the innermost product in \eqref{prod3} for each $1\leq t\leq q_\ell.$
When $t=1$, the values of $s$ appearing in the product 
\begin{equation*} 
\mathop{\prod_{s= \lfloor \frac{a_k}{2} \rfloor+1}^{c-1}}_{t_s =1}  \!\tfrac12 | u_k(1)- (2s+1)|
\end{equation*} 
are the integers $s\geq 1 + \lfloor \frac{a_k }{ 2} \rfloor$ for which $2s+1 < \tfrac12 \left( u_k(1) + u_k(2)\right),$ or equivalently
$$ 1 + \left\lfloor \frac{a_k }{ 2} \right\rfloor  \, \leq\, s \, <\, \frac{3a_k }{ 2} + \frac32 \left( \frac{\pl}{ \ql} + \frac{\pltau }{ \qltau} - \frac{2b }{\qltau}   \right) -  \frac{\{\alpha_{\sigma_k}\} + \{2\alpha_{\sigma_k}\} }{ 2} \, \cdot   $$
 The number of integers $s$ in any interval of the form $[n,\kappa)$, where $n$ is an integer, is $1+ \lfloor \kappa \rfloor -n $. In our case we have $n=1 + \lfloor \frac{a_k}{2} \rfloor$ and 
\begin{eqnarray*}
	\kappa &=& \frac{3a_k}{2} + \frac{3}{2} \left( \frac{\pl }{ \ql} + \frac{\pltau }{\qltau} -\frac{2b }{\qltau} \right) - \frac{\{\alpha_{\sigma_k}\} + \{2\alpha_{\sigma_k}\} }{2} \\
	& \geq & \frac{3a_k}{2} + \frac{3}{2} \left( \frac{\pl }{ \ql} + \frac{\pltau }{\qltau} -\frac{2b }{\qltau}   \right) -    \frac32 {\{\alpha_{\sigma_k} \} } \\
	&=&  \frac{3a_k}{2} + \frac{3}{2} \Big(   \underbrace{ \frac{\pl}{\ql}  -\alpha_{\sigma_k} }_{ \frac{1}{ 2q_\ell q_{\ell+1}} \leq \ldots \leq \frac{1}{q_\ell q_{\ell+1} }}  + \underbrace{ \frac{\pltau }{ \qltau} -\frac{2b }{ \qltau} }_{>0 }  \Big),
\end{eqnarray*}	
so there are at least $a_k$ values of $s$ for which $t_s=1$. Consider now the  corresponding factors in the innermost product in \eqref{prod3}. From \eqref{explicitdistance} and Lemma  \ref{fractionalpartlowerbound} it follows that the minimal factor is equal to $\|R_1\|$, whereas the remaining factors can be bounded below by the values $ 1,2,\ldots, a_k-1 .$ Therefore
\begin{equation} \label{p1}
\mathop{\prod_{s= \lfloor \frac{a_k}{2} \rfloor+1}^{c-1}}_{t_s =1 }\!\!\tfrac12 | u_k(1)- (2s+1)| \, \geq\, \|R_1\| \cdot   (a_k-1)    !
\end{equation}
We then examine the innermost product of \eqref{prod3} for $1< t < q_\ell$. The factors appearing are those corresponding to integers $s$ such that 
$$ \tfrac12\left( u_k(t-1)+ u_k(t) \right) \, < \, 2s + 1 \, < \, \tfrac12 \left(u_k(t) + u_k(t+1) \right), $$
or equivalently 
$$ \frac{2t-1}{|c_ke_k| }  -\{(t-1)\alpha_{\sigma_k}\} -\{t\alpha_{\sigma_k}\} +1 < 2s+1 < \frac{2t+1}{|c_ke_k| } -\{t\alpha_{\sigma_k}\} - \{(t+1)\alpha_{\sigma_k}\} +1 .  $$
Observe that the number of integers in an interval $(\alpha, \beta)$ is at least $\lfloor \beta-\alpha\rfloor.$ Here the length of the interval of possible values of $s$ is 
\begin{eqnarray*}
	\left( \frac{t +\frac12 }{\ckek}  - \frac{\{t\alpha_{\sigma_k}\} -\{(t+1)\alpha_{\sigma_k} \} }{2} \right) & - &  \left( \frac{t - \frac12 }{\ckek}  - \frac{\{t\alpha_{\sigma_k}\} -\{(t-1)\alpha_{\sigma_k} \} }{2 }   \right) =  \\
	& =& \frac{1}{ |c_ke_k| } + \frac{ \{(t-1)\alpha_{\sigma_k}\}  + \{(t+1)\alpha_{\sigma_k} \} }{2} \\
	& \geq & a_k + \frac{\pl}{\ql} + \frac{\pltau }{\qltau} -\frac{2b }{\qltau}  -\alpha_{\sigma_k} \\
	& \geq & a_k,
\end{eqnarray*}	
so there exist at least $a_k$ values of $s$. The minimal factor $\frac{1}{2}|u_k(t)-(2s+1)|$ is  equal to $\|R_t\|$, while the remaining ones can be bounded from below by $1,2,\ldots, a_k -1.  $
We thus have
\begin{equation} \label{p2}
 \mathop{\prod_{s= \lfloor \frac{a_k}{2} \rfloor+1}^{c-1}}_{t_s=t}\!\!\tfrac12 | u_k(t)- (2s+1)|\, \geq\, \|R_t\|  \cdot (a_k - 1)!,\, \qquad t=2,\ldots, q_\ell -1 .
\end{equation}

Finally we deal with the innermost product in \eqref{prod3} when $t=q_\ell$. The values of $s $ appearing in the product are those for which 
\begin{eqnarray*}
2c-1\, \geq\, 2s+1 &>& \tfrac12 \left( u_k(q_\ell) +u_k(q_\ell-1) \right) \\
	&=& 2(c-2b)-\left(  a_k + \frac{p_{\ell}(\alpha_{\sigma_k})}{q_{\ell}(\alpha_{\sigma_k})} + \frac{p_{\ell}(\alpha_{\tau_k})}{q_{\ell}(\alpha_{\tau_k})} - \frac{2b}{q_{\ell}(\alpha_{\tau_k})}\,  \right)\\
	& \, & \qquad \qquad -\{q_\ell \alpha_{\sigma_k}\} - \{(q_\ell-1)\alpha_{\sigma_k}\} + 1 ,
\end{eqnarray*}
which is equivalent to 
\begin{eqnarray*}
	c-1 \,\,  \geq \,  \,  s &>&  c - \frac12 \left(  a_k + \frac{p_{\ell}(\alpha_{\sigma_k})}{q_{\ell}(\alpha_{\sigma_k})} + \frac{p_{\ell}(\alpha_{\tau_k})}{q_{\ell}(\alpha_{\tau_k})} - \frac{2b}{q_{\ell}(\alpha_{\tau_k})}\,   \right) \\
	& & \qquad  - \frac{ \{q_\ell \alpha_{\sigma_k}\} + \{ (q_\ell-1)\alpha_{\sigma_k}\}}{2 }  - 2b \,. 
\end{eqnarray*} 
For any $\kappa >1$, the interval $(c-\kappa, c-1]$ contains precisely $\lfloor\kappa \rfloor$ integers. Here we need to apply this observation with 
\begin{eqnarray*}
	\kappa &=& \frac{1}{2} \left(  a_k + \frac{p_{\ell}(\alpha_{\sigma_k})}{q_{\ell}(\alpha_{\sigma_k})} + \frac{p_{\ell}(\alpha_{\tau_k})}{q_{\ell}(\alpha_{\tau_k})} - \frac{2b}{q_{\ell}(\alpha_{\tau_k})}\,   \right) + \frac{ \{q_\ell \alpha_{\sigma_k}\} + \{ (q_\ell-1)\alpha_{\sigma_k} \}  }{2} + 2b \\
	& \geq &  \frac{1}{2} \left(  a_k + \frac{p_{\ell}(\alpha_{\sigma_k})}{q_{\ell}(\alpha_{\sigma_k})} + \frac{p_{\ell}(\alpha_{\tau_k})}{q_{\ell}(\alpha_{\tau_k})} - \frac{2b}{q_{\ell}(\alpha_{\tau_k})}\,   \right) +
	\frac{ \left(1 - \frac{1}{q_{\ell+1}} \right) + \left(1 - \frac{1}{ q_{\ell+1}}  -\alpha_{\sigma_k} \right)    }{ 2}  + 2b  \\
	& \geq & \frac{1}{2} \left(  a_k + \frac{p_{\ell}(\alpha_{\sigma_k})}{q_{\ell}(\alpha_{\sigma_k})} - \alpha_{\sigma_k} + \frac{p_{\ell}(\alpha_{\tau_k})}{q_{\ell}(\alpha_{\tau_k})} - \frac{2b}{q_{\ell}(\alpha_{\tau_k})}\,  + 2 - \frac{2}{q_{\ell+1}} + 4b  \right) 
\end{eqnarray*}
hence there must be at least $1+ \left\lfloor \dfrac{a_k}{2} \right\rfloor $  such factors.

 When $t=q_\ell$ and $s=c-1$, we saw already that 
\begin{align*}
	\tfrac12 |u_k(q_{\ell}) - 2c+1| &=|q_{\ell}a_k + p_{\ell}(\alpha_{\tau_k}) +p_{\ell}(\alpha_{\sigma_k})-c + R_{q_{\ell}}|   =   |R_{q_{\ell}}| =b.
\end{align*} 
For the remaining values of $s$ with $t_s = q_\ell$, we bound the factors $ \tfrac12 |u_k(q_\ell)-(2s+1)|$ from below by  
$$r \, - \,  \frac{1 }{q_{\ell+1}}, \qquad    r = 1, 2, \ldots,\left\lfloor\frac{a_k}{ 2}\right\rfloor.$$  
Consequently,  
\begin{eqnarray}
\mathop{ \prod_{\lfloor a_k/2 \rfloor + 1\leq s < c -1 }}_{t_s =q_\ell}\hspace{-5mm}\tfrac12 |u_k(q_\ell)- (2s-1)|   & \geq &   b \prod_{r=1}^{\lfloor a_k /2 \rfloor}\!\!\left( r - \frac{1 }{q_{\ell+1}} \right) \nonumber \\[-2ex]
& =&    b  \left\lfloor \frac{a_k}{2} \right\rfloor !   \prod_{r =1}^{\lfloor a_k /2 \rfloor  }\!\!\left( 1 - \frac{1 }{r q_{\ell+1}}\right) \nonumber \\
& \geq &    \frac{1}{c} \left\lfloor \frac{a_k}{2} \right\rfloor ! \Big(   1- \sum_{r=1}^{\lfloor a_k/2 \rfloor   }\hspace{-2mm} \frac{1 }{r q_{\ell+1}}   \Big)    \nonumber  \\
&\geq&   \frac{4}{5c}  \left\lfloor \frac{a_k}{2} \right\rfloor   !  \quad . \label{p3}
\end{eqnarray}
On combining \eqref{prod3} with \eqref{p1}, \eqref{p2} and \eqref{p3}, we obtain inequality \eqref{eq:firstpi2bd}. This completes the proof of Lemma \ref{lem:pi2one}.
\end{proof}

Let us now give an outline of the proof of Lemma \ref{lem:pi2odd2}. Recall that this result gives a bound on $\Pi_2'$ defined in \eqref{eq:pi23marked}, relevant to the case of odd period lengths $\ell$.
Using the same arguments that gave the bound for $\Pi_2$ above, we find that 
 \begin{equation*} 
\Pi_2' = 2^{c- \lfloor \frac{a_k}{2}\rfloor -1 }\,  \prod_{t=1}^{q_\ell}  \mathop{\prod_{s= \lfloor \frac{a_k}{2} \rfloor+1}^{c-1}}_{t_s =t}    \tfrac12 | u_k(t)- (2s+1-2b)|,
\end{equation*}
where $t_s$ is defined in \eqref{eq:ts}.
We now proceed as before, and find lower bounds on the factors in the innermost product for each $1 \leq t \leq q_{\ell}$. 
According to \eqref{explicitdistance}, the factors appearing in the product are
\begin{eqnarray} \label{oddFactors}
\frac{1}{2}\left(u_k(t) -(2s+1-2b)\right) &=& \underbrace{ ta_k + \left\lfloor \frac{tp_{\ell}(\alpha_{\tau_k})}{q_{\ell}(\alpha_{\tau_k})} \right\rfloor + \lfloor t\alpha_{\sigma_k}\rfloor  -s}_\text{$\in \mathbb{Z} $ }\,\, +\,\,  R_t + b,   
\end{eqnarray}
with $R_t$ given in \eqref{Rtdefinition}. 

When $\ell$ is odd, $t=q_\ell$ and $s=c-1$, the right hand side of \eqref{oddFactors} is equal to 
\begin{multline} 
a_kq_\ell + p_\ell + \lfloor q_\ell \alpha_{\sigma_k} \rfloor - c + 1 + q_\ell\left(\frac{p_\ell}{q_\ell} -\alpha_{\sigma_k}\right) - b  = \nonumber \\
= a_k q_\ell + q_{\ell-1} + \left\lfloor q_\ell \left(\alpha_{\sigma_k}-\frac{p_\ell }{q_\ell }\right) + p_\ell \right\rfloor - c +1 +q_\ell\left(\frac{p_\ell}{q_\ell} -\alpha_{\sigma_k}\right) -  b \nonumber \\
= q_{\ell+1} + p_\ell - (q_{\ell+1} + p_\ell)  +1 + q_\ell\left(\frac{p_\ell}{q_\ell} -\alpha_{\sigma_k}\right) - b  \nonumber   \\
= 1+  q_\ell\left(\frac{p_\ell}{q_\ell} -\alpha_{\sigma_k}\right) -   b  \, \stackrel{\eqref{ltherror}}{=}  1. \qquad \qquad \quad \label{minsize}
\end{multline}
It follows that 
\begin{equation*}  
\mathop{\prod_{s= \lfloor \frac{a_k}{2} \rfloor+1}^{c-1}}_{t_s=q_\ell}  \tfrac12 |u_k(q_\ell) - (2s+1-2b)|\, \, \geq   \, \,  \left\lfloor \frac{a_k}{ 2}\right\rfloor  ! \quad  .
\end{equation*}
For $t=1, \ldots , q_{\ell}-1$, we argue as for the even period case in Lemma \ref{lem:pi2one}, and obtain
\begin{equation*}
 \mathop{\prod_{s= \lfloor \frac{a_k}{2} \rfloor+1}^{c-1}}_{t_s=t}\!\!\tfrac12 | u_k(t)- (2s+1-2b)|\, \geq\, \|R_t+b\|  \cdot (a_k - 1)!,\, \qquad t=1,\ldots, q_\ell -1  .
\end{equation*}
Combining these two estimates, we deduce that
\begin{eqnarray*}
\Pi_2' &\geq& 2^{c- \lfloor \frac{a_k}{ 2}\rfloor -1}\cdot [(a_k-1)!]^{q_\ell-1}\cdot \left\lfloor \frac{a_k}{ 2}\right\rfloor ! \cdot \prod_{t=1}^{q_\ell-1}\|R_t+b\| ,
\end{eqnarray*}
which confirms Lemma \ref{lem:pi2odd2}.

\section{Proof of Theorem \ref{thm:ak23}\label{sec:proofthm3}}
With Theorems \ref{thm:boundCk} and \ref{thm:boundCkodd} established, let us now prove Theorem \ref{thm:ak23}. As explained in Remark \ref{rem4}, it suffices to study quadratic irrationals $\alpha$ with purely periodic continued fraction expansion. \par  Recall that by Theorem \ref{thm2}, Theorem \ref{thm:ak23} is verified if we can show that 
\begin{equation*}
    C_k = \lim_{m \to \infty} P_{q_{m \ell +k}}(\alpha) < 1
\end{equation*}
for every $\alpha=[0;\overline{a_1, a_2, \ldots, a_{\ell}}]$ with $a_k = \max_j a_j \geq 23$.

Assume first that the period length $\ell$ is odd. Then $q_{\ell}>1$, and from Theorem \ref{thm:boundCkodd} it follows that 
\begin{equation*}
    C_k \leq \frac{\pi}{\sqrt{2}a_k}e^{1+f(a_k)} \left( 40c^{\frac32}\right)^{\frac{1 }{ c}} a_k^{ \frac{5}{2a_k}}.
\end{equation*}
The factor $(40c^{3/2})^{1/c}$ is decreasing in $c$ (within the range of relevant values of $c$), so we may safely use the fact that $c > a_kq_{\ell}$ to obtain
\begin{align*}
    C_k &\leq \frac{\pi}{\sqrt{2}a_k}e^{1+f(a_k)} \left( 40a_k^{\frac32}q_{\ell}^{\frac32}\right)^{ \frac{1}{a_kq_{\ell}}} a_k^{ \frac{5 }{ 2a_k}}\\
    &\leq \frac{\pi}{\sqrt{2}a_k}e^{1+f(a_k)} \left( 40a_k^{\frac32}\right)^{ \frac{1 }{ a_kq_{\ell}}} 2^{\frac{1}{a_k}} a_k^{\frac{5 }{ 2 a_k}},
\end{align*}
where for the last inequality we have used that $q_{\ell}^{3/2q_{\ell}}<2$. This expression is decreasing in $q_{\ell}$, so we insert $q_{\ell}=2$ to obtain
\begin{equation*}
    C_k \leq \frac{\pi}{\sqrt{2}a_k}e^{1+f(a_k)} \left( 160a_k^{\frac{13}{2}}\right)^{ \frac{1}{2a_k}}.
\end{equation*}
The right hand side is a decreasing function of $a_k$, and it can be easily verified that $C_k \leq 1$ whenever $a_k\geq 22$. This proves Theorem \ref{thm:ak23} for odd period lengths $\ell$.

Now assume that the period length $\ell$ is even, and consider first the case $q_{\ell}>1$. By Theorem \ref{thm:boundCk} we then have 
\begin{equation*}
    C_{k}^{\frac{c-2 }{c}} \leq \frac{\pi}{\sqrt{2} a_k}e^{1+f(a_k)}(200e^{2.4}c^2)^{\frac{1}{c}} \left( \frac{a_k^{\frac52}}{e}\right)^{\frac{1}{a_k}}. 
\end{equation*}
The factor $(200e^{2.4}c^2)^{1/c}$ is again decreasing in $c$, so we replace $c$ by $a_kq_{\ell}$ to obtain
\begin{align*}
    C_{k}^{\frac{c-2 }{c}} &\leq \frac{\pi}{\sqrt{2}a_k}e^{1+f(a_k)}(200e^{2.4}a_k^2q_{\ell}^2)^{\frac{1 }{ a_k q_{\ell}}} \left( \frac{a_k^{\frac52}}{e}\right)^{\frac{1 }{a_k}}\\
    &\leq \frac{\pi}{\sqrt{2}a_k} e^{1+f(a_k)}(200e^{2.4}a_k^2)^{\frac{1}{a_k q_{\ell}}} \cdot a_k^{\frac{5 }{2a_k}},
\end{align*}
where we have used that $q_{\ell}^{2/ q_{\ell}}<e$. This expression is decreasing in $q_{\ell}$, so we insert $q_{\ell}=2$ to obtain
\begin{equation*}
    C_k^{\frac{c-2 }{ c}} \leq \frac{\pi}{\sqrt{2}a_k} e^{1+f(a_k)} \left(200e^{2.4}a_k^7\right)^{\frac{1}{2a_k}}.
\end{equation*}
One can again verify that the right hand side is below one whenever $a_k \geq 23$. This proves Theorem \ref{thm:ak23} for even $\ell$ in the case $q_{\ell}>1$. 

Finally, we consider the case $q_{\ell}=1$. Recall that this can only occur if $\ell=2$ and $\alpha=[0;\overline{a_1, a_2}]$, with either $a_1=1$ or $a_2=1$. By Corollary \ref{cor:boundCkq=1}, we then have the bound 
\begin{equation*}
    C_k^{\frac{c-2}{c}} \leq \frac{\pi}{a_k}e^{1+g(a_k)} \left(6.2(a_k+2)^4 \right)^{\frac{1}{a_k+2}},
\end{equation*}
where $g(a_k) \leq 3.3/a_k+0.1$ and one can again verify that the right hand side is below one whenever $a_k \geq 21$. This verifies Theorem \ref{thm:ak23} when $q_{\ell}=1$, and completes the proof.


\begin{thebibliography}{8}

\bibitem{chris_bence}	
C. Aistleitner, B. Borda, {\em Quantum invariants of hyperbolic knots and extreme values of trigonometric products,}  Math. Z. (2022) https://doi.org/10.1007/s00209-022-03086-5
	
\bibitem{ATZ20}
C. Aistleitner, N. Technau, A. Zafeiropoulos, {\em On the order of magnitude of Sudler products.} To appear in Am. J. Math. (2022).  pre-print: arXiv:2002.06602 

\bibitem{avdonin}
S. A. Avdonin, {\em On Riesz bases of exponentials in $L^2$,} Vestnik Leningrad Univ. Math. 7 (1979), 203--211.

\bibitem{ergodic}
M. Einsiedler, T. Ward,  {\em Ergodic Theory: with a view towards Number Theory,} Springer Verlag, London, 2011

\bibitem{erdos}
P. Erd\H{o}s, G. Szekeres, {\em On the product $\prod_{k=1}^{n}(1-z^{a_k}) $}. Acad. Serbe Sci. Publ. Inst. Math., 13 (1959), 29--34.

\bibitem{gkn}
S. Grepstad, L. Kaltenb\"ock, M. Neum\"uller, {\em A positive lower bound for $\liminf\limits_{N\to\infty}\prod_{r=1}^{N}2|\sin \pi r\phi|$.} Proc. \vspace{-1mm} \newline
Amer. Math. Soc. 147 (2019), 4863--4876.

\bibitem{sigrid3}
S. Grepstad, L. Kaltenb\"ock, M. Neum\"uller, {\em On the asymptotic behaviour of the sine product $\prod_{r=1}^{n}2|\sin \pi r\alpha|$}. D. Bilyk, J. Dick, F. Pillichshammer (Eds.) Discrepancy theory, Radon Series on Computational and Applied Mathematics 26 (2020), 103--115. 

\bibitem{GN18}
S. Grepstad, M. Neum\"uller, {\em Asymptotic behaviour of the Sudler product of sines for quadratic irrationals.} 
J. Math. Anal. Appl. 465 (2017), no. 2, 928--960.

\bibitem{kadec}
M.~I.~Kadec, \emph{The exact value of the {P}aley-{W}iener constant.} Soviet Math. Dokl., 5 (1964), 559--561.

\bibitem{lubinsky}
D. Lubinsky, {\em The size of $(q;q)_n$ for $q$ on the unit circle.} J. Number Theory 76 (1999), no. 2, 217--247.

\bibitem{pinner}
C. Pinner, \emph{On sums of fractional parts $\{n\alpha + \gamma\}$.} J. Number Theory 65 (1997), 48--73.
 
\bibitem{mv}
P. Verschueren, B. Mestel, {\em Growth of the Sudler product of sines at the golden rotation number}. J. Math. Anal. Appl. 433 (2016), 200--226.

	
\end{thebibliography}
\end{document}